\newtheorem{thm}{Theorem}[section]
\newtheorem{pro}[thm]{Proposition}
\newtheorem{lem}[thm]{Lemma}
\newtheorem{cor}[thm]{Corollary}
\theoremstyle{definition}
\newtheorem{defi}[thm]{Definition}
\newtheorem{rem}[thm]{Remark}
\newtheorem{exa}[thm]{Example}
\newcommand{\vu}{\vspace{.1cm}}
\newcommand{\vd}{\vspace{.2cm}}
\newcommand{\due}{\underline{\overline{\#}}}
\title[]{Hopf algebras arising from partial (co)actions}
\author[Azevedo]{Danielle Azevedo}
\address[Azevedo]{Instituto Federal de Educação, Ciência e Tecnologia do Rio Grande do Sul, Brazil}
\email{danielle.azevedo@alvorada.ifrs.edu.br}
\author[Martini]{Grasiela Martini}
\address[Martini]{Universidade Federal do Rio Grande, Brazil}
\email{grasiela.martini@furg.br}
\author[Paques]{Antonio Paques}
\address[Paques]{Universidade Federal do Rio Grande do Sul, Brazil}
\email{paques@mat.ufrgs.br}
\author[Silva]{Leonardo Silva}
\address[Silva]{Universidade Federal do Rio Grande do Sul, Brazil}
\email{leonardoufpel@gmail.com}
\begin{document}

\allowdisplaybreaks
\begin{abstract}
	In this paper, extending the idea presented by M. Takeuchi in \cite{Takeuchi}, we introduce the notion of  partial matched pair $(H,L)$ involving the concepts of partial action and partial coaction between two Hopf algebras $H$ and $L$. Furthermore, we present necessary conditions for the corresponding bismash product $L\# H$ to generate a new Hopf algebra and, as illustration, a family of examples is provided.
\end{abstract}

\thanks{{\bf MSC 2010:} primary 16T99; secondary 16S40}

\thanks{{\bf Key words and phrases:} Hopf algebras, partial (co)actions, partial bismash products, partial matched pair}

\thanks{The fourth author was partially supported by CNPq, Brazil}

\maketitle

\section{Introduction}

Partial group action is a concept that appeared for the first time within the theory of operator algebras, more precisely in the study by R. Exel \cite{Exel} about $C^\ast$-algebras generated by partial isometries over Hilbert spaces. In \cite{Dokuchaev} R. Exel and M. Dokuchaev presented a purely algebraic version of this concept and in \cite{Ferrero} M. Dokuchaev, M. Ferrero and the third author developed a Galois theory in this latter setting \cite{Ferrero}, which motivated the arising of a partial Hopf theory developed by S. Caenepeel and K. Janssen in \cite{Caenepeel} while M. Alves and E. Batista \cite{Marcelo} extended the results from \cite{Dokuchaev} concerning mainly to globalizations  to the Hopf context. Those works led to other investigations in several new directions and settings (see, for instance, the recent and nice survey by M. Dokuchaev and within its extensive list of references \cite{Dokucha}).

\vu

The notion of a matched pair  $(H,L)$ of Hopf algebras was introduced in the literature by W. Singer \cite{Singer} in the graded case and, under the condition that $L$ was commutative and $H$ cocommutative, a new Hopf algebra was obtained. In \cite{Takeuchi}, M. Takeuchi considered the ungraded case and constructed a Hopf algebra called the bismash product of $L$ with $H$ and denoted by $L \# H$.

\vu

In \cite{Majid} S. Majid also presented results that generated new structures of Hopf algebras. He described a huge class of noncommutative and noncocommutative Hopf algebras, called bicrossed product of Hopf algebras. Like M. Takeuchi, S. Majid also defined matched pair $(H, L)$ and constructed a Hopf algebra from it, assuming a certain condition of compatibility between $L$ and $H$. Such a condition replaced the commutativity and cocommutativity conditions required by M. Takeuchi and generalized the matched pair defined by him.

\vu
In this paper we extend the matched pair due to M. Takeuchi to the context of partial actions and partial coactions (briefly, partial (co)actions) and present conditions to construct Hopf algebras from such a pair. Furthermore, we  provide a family of examples to illustrate our method, as well as,  highlight some expected classical properties of the structure of the Hopf algebras obtained in this way, concerning to semisimplicity and dualization.

\vu
The present work is divided as follows. In section 2 the concepts of partial (co)actions and matched pair will be briefly recalled.

\vu

In section 3 we introduce the definition of partial matched pair $(H, L)$ of Hopf algebras involving the structures of partial module algebra and partial comodule coalgebra and analyze under what conditions it is possible to construct a Hopf algebra involving the usual partial smash product, which we will denote by $L\due H$. Essentially,  we replace the commutativity of $L$ and the cocommutativity of $H$  proposed by M. Takeuchi by a weaker condition, namely  quasi-abelian (see Definition \ref{defabeliano}), and dedicate the rest of this section to construct necessary conditions to insure the existence of a family of partial (co)actions that generate Hopf algebras through quasi-abelian partial matched pairs (see, in particular, Theorem \ref{anti}).

\vu

In section 4 we present several examples of partial matched pair with the corresponding description of  the Hopf algebras obtained from them.  We end this section making still some considerations about the dualization of the structure of such algebras, as well as, the existence of nonzero integrals within these ones.

\vu

As a final remark, we observe that, with a similar approach, it is also possible to extend the notion of  matched pair due to S. Majid \cite{Majid} to the context of partial (co)actions, as well as, to generate a new Hopf algebra from its bicrossed product.

\vu

Throughout, vector spaces and (co)algebras will be all considered over a fixed field $\Bbbk$. Unadorned $\otimes$ means $\otimes_{\Bbbk}$. Moreover, in this text (co)algebras are not necessarily (co)unital.

\section{Preliminaries}

\subsection{Partial (co)actions} In this section we will recall the notions of partial actions and coactions of Hopf algebras.

\begin{defi} \label{part_mod_alg}\cite[Proposition 4.5]{Caenepeel} Let $H$ be a Hopf algebra. A unital algebra $A$ is a \textit{left partial $H$-module algebra} if there is a linear map $\rightharpoonup: H \otimes A  \longrightarrow  A$, denoted by $\rightharpoonup(h \otimes a)= h\rightharpoonup a$, such that
	\begin{itemize}
	\item [(i)] $1_H\rightharpoonup a=a$;
	\item [(ii)] $h\rightharpoonup ab=(h_1\rightharpoonup a)(h_2\rightharpoonup b)$;
	\item [(iii)] $h\rightharpoonup(g\rightharpoonup a)=(h_1\rightharpoonup 1_A)(h_2g\rightharpoonup a)$,
	\end{itemize}
for all $h,g\in H$ and $a,b\in A$. In this case $\rightharpoonup$ is called a  \emph{left partial action} of $H$ on $A$. Moreover, we say that $\rightharpoonup$  is \textit{symmetric} if the following additional condition holds
\begin{itemize}
\item[(iv)] $h\rightharpoonup (g\rightharpoonup a)=(h_1g\rightharpoonup a)(h_2\rightharpoonup 1_A)$.
\end{itemize}
Right partial $H$-module algebras are defined similarly.
\end{defi}
Every left (global) $H$-module algebra is a partial one. In particular, a left partial $H$-module algebra is global if and only if  $h\rightharpoonup 1_A = \varepsilon_H(h)1_A$, for all $h\in H$.

\begin{exa}\label{mod_acao_corpo} \cite[Lemma 4.1]{Felipe}
Let $H$ be a Hopf algebra, $A$ a unital algebra and $\lambda:H \longrightarrow \Bbbk$ a linear map. Then the map $\rightharpoonup:H\otimes A\to A$ given by $h\rightharpoonup a= \lambda(h)a$, for all $h\in H$ and $a\in A$, is a left partial action of $H$ on $A$ if and only if $\lambda(1_H)=1_{\Bbbk}$ and $\lambda(h)\lambda(g)=\lambda(h_1)\lambda(h_2g)$, for all $g,h\in H$.
\end{exa}
			
Given any left partial $H$-module algebra $A$, we can endow the tensor product $A \otimes H$ with an algebra structure induced by
$$(a\otimes h)(b\otimes g)=a(h_1\rightharpoonup b)\otimes h_2g,$$
for $a,b\in A$ e $h,g\in H$. We denote this structure by $A \# H$ and we call it the smash product of $A$ with $H$. In this case, $A\# H$ is just an algebra with left unit $1_A\# 1_H$.
			
	\begin{lem}\label{algebra_smash} \cite[Section 2]{Muniz}
	The vector subspace $A\underline{\#}H=(A\# H)(1_A\# 1_H)$ is a unital algebra with the multiplication induced by
	$$(x\underline{\#}h)(y\underline{\#}g)=x(h_1\rightharpoonup y)\underline{\#} h_2g,$$
	for all $x,y\in A$ and $h,g\in H$, and unit given by $1_A\underline{\#}1_H$. The unital algebra $A\underline{\#} H$ is called the partial smash product of $A$ with $H$.
	\end{lem}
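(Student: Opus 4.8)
The plan is to avoid verifying associativity or the multiplication formula by brute force, and instead to realize $A\underline{\#}H$ as the image of an idempotent living inside the already-constructed algebra $A\# H$. Concretely, I would set $e:=1_A\# 1_H$ and observe that $A\underline{\#}H=(A\# H)e$ is the \emph{corner} cut out by $e$; since $A\# H$ is given to be an associative algebra with $e$ as a left unit, every structural property I need will be inherited, and the only genuinely new inputs are that $e$ is idempotent and that right multiplication by $e$ projects onto the stated subspace.

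First I would check idempotency: using $\Delta(1_H)=1_H\otimes 1_H$ and axiom (i),
\[
e^2=(1_A\# 1_H)(1_A\# 1_H)=1_A(1_H\rightharpoonup 1_A)\# 1_H=1_A\# 1_H=e .
\]
Because $e$ is a left unit, right multiplication $R_e\colon v\mapsto ve$ then satisfies $R_e^2=R_e$, so it is a linear idempotent with image exactly $(A\# H)e=A\underline{\#}H$. On this image $e$ is a two-sided identity: for $w=ve$ one has $ew=w$ (global left unit) and $we=ve^2=ve=w$ (idempotency). Next I would verify closure under the product of $A\# H$: for $w_i=v_ie$ the left-unit identity $ev_2=v_2$ gives $w_1w_2=(v_1e)(v_2e)=v_1(ev_2)e=v_1v_2e\in A\underline{\#}H$. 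Hence $A\underline{\#}H$ is a subset of the associative algebra $A\# H$ that is closed under multiplication and contains the two-sided unit $e$, so it is a unital associative algebra, with unit $1_A\underline{\#}1_H=(1_A\# 1_H)e=e$.

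It then remains to read off the multiplication. Writing $x\underline{\#}h:=(x\# h)e$ (these elements span $A\underline{\#}H$), I would compute, using associativity, the left-unit identity $e(y\# g)=y\# g$, and finally the definition of the smash product,
\[
(x\underline{\#}h)(y\underline{\#}g)=\big((x\# h)e\big)\big((y\# g)e\big)=(x\# h)(y\# g)e=x(h_1\rightharpoonup y)\underline{\#}h_2g ,
\]
which is precisely the claimed formula.

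The step I would flag as the only delicate one is well-definedness: the assignment $(x,h)\mapsto x\underline{\#}h=(x\# h)e$ is not injective, so in principle the displayed formula could depend on the chosen representatives of $x\underline{\#}h$ and $y\underline{\#}g$. This worry dissolves once one notes that the operation on $A\underline{\#}H$ is nothing but the restriction of the honest, representative-independent product of $A\# H$; the displayed identity merely computes that product on spanning elements. Thus the real content is the corner argument of the second paragraph --- combining $e^2=e$ with the one-sided unit property to obtain simultaneously closure and a two-sided identity --- while the formula and the unit are immediate consequences.
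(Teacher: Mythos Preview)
Your argument is correct. The paper does not actually prove this lemma; it simply quotes the result from \cite[Section 2]{Muniz}, so there is no ``paper's own proof'' to compare against. Your corner-by-an-idempotent approach (use that $e=1_A\#1_H$ is a left unit and $e^2=e$, hence $(A\#H)e$ is closed under the associative product of $A\#H$ and has $e$ as two-sided identity) is precisely the standard way this is established in the cited reference, and your observation that well-definedness of the displayed formula is automatic --- because the multiplication is the \emph{restriction} of the ambient product, not something defined anew on representatives --- is the right way to handle that point.
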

			
	Next we have the definition of partial coaction of a Hopf algebra $H$ on a counital coalgebra $C$.
			
	\begin{defi}\cite[Definition 6.1]{Batista}\label{part_comod_coalg} Let $H$ be a Hopf algebra. A counital coalgebra $C$ is  a \textit{right partial $H$-comodule coalgebra} if there exists a linear map $\rho: C \longrightarrow  C \otimes H$ denoted by $\rho(c)=c^0 \otimes c^1$, for all $c\in C$, satisfying
	\begin{itemize}
	\item [(i)] $(id_C\otimes \varepsilon_H)\rho(c)=c$;
	\item [(ii)] $(\Delta_C\otimes id_H)\rho(c)={c_1}^0\otimes {c_2}^0\otimes {c_1}^1 {c_2}^1$;
	\item [(iii)] $(\rho \otimes id_H)\rho(c)={c_1}^0\otimes {{c_1}^1}_1\otimes {{c_1}^1}_2\varepsilon_C({c_2}^0){c_2}^1$.
	\end{itemize}
	In this case $\rho$ is called a \emph{right partial coaction} of $H$ on $C$. If, in addition
\begin{itemize}
\item[(iv)] $(\rho \otimes id_H)\rho(c)= {c_2}^0\otimes {{c_2}^1}_1\otimes {{c_2}^1}_2\varepsilon_C({c_1}^0){c_1}^1$
 \end{itemize} holds, then we say that the $\rho$ is \textit{symmetric}. In a similar way one defines a left partial $H$-comodule coalgebra.
\end{defi}
						
	Every right (global) $H$-comodule coalgebra is a partial one. Moreover, a right partial $H$-comodule coalgebra $C$ is global if and only if $\varepsilon_C(c^0)c^1 = \varepsilon_C(c)1_H$, for all $c\in C$.
						
	\begin{exa} \label{prop cc}\cite[Proposition 4.12]{Glauber}
	Let $H$ be a Hopf algebra, $z\in H$ and $C$ a counital coalgebra. Then the linear map $\rho: C \longrightarrow C \otimes H$ given by $\rho(c)= c \otimes z$ is a right partial coaction if and only if $\varepsilon_H(z)= 1_{\Bbbk}$ and $z \otimes z = \Delta_H(z)(1 \otimes z)$.
	Note that if the element $z$ satisfies the earlier conditions, then $z^2 = z$.
	\end{exa}
							
\begin{rem} \label{coalgebra cosmash} Let $H$ be a Hopf algebra and $C$ a right partial $H$-comodule coalgebra. Then the vector space $H\otimes C$ is a right counital coalgebra given by
	$$\Delta(h\otimes c)=(h_1\otimes {c_1}^0)\otimes (h_2{c_1}^1\otimes c_2) \ \mbox{and} \  \varepsilon(h\otimes c)=\varepsilon_H(h)\varepsilon_C(c),$$ for all $c\in C$ and $h\in H$.  We denoted this structure by $H\rtimes C$.
\end{rem}

Note that $(\varepsilon\otimes id_{H\otimes C})\Delta(h\otimes c)=h\varepsilon_C({c_1}^0){c_1}^1\otimes c_2$. Thus, when $C$ is a right $H$-comodule coalgebra the map $\varepsilon$ is a counit. In \cite{Batista} the authors constructed the partial smash coproduct associated to a right partial $H$-comodule coalgebra in the following way.

\begin{pro} \label{coalgebra cosmash2} Let $C$ be a right partial $H$-comodule coalgebra. Then the subspace
$H\overline{\rtimes }C=(\varepsilon\otimes id_{H\otimes C})\Delta(H\rtimes C)$ is a counital coalgebra induced by
	$$\Delta(h\overline{\rtimes } c)=(h_1\overline{\rtimes } {c_1}^0)\otimes (h_2{c_1}^1\overline{\rtimes } c_2) \ \mbox{and} \ \varepsilon(h\overline{\rtimes } c)=\varepsilon_H(h)\varepsilon_C(c),$$
for all $c\in C$ and $h\in H$. Each element of $H\overline{\rtimes }C$ is written as follows $h\overline{\rtimes }c=\varepsilon_C({c_1}^0)h{c_1}^1\otimes c_2.$
\end{pro}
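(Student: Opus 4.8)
The plan is to realize $H\overline{\rtimes}C$ as the image of an idempotent endomorphism of the coalgebra $H\rtimes C$ from Remark \ref{coalgebra cosmash}, dualizing exactly the way the partial smash product of Lemma \ref{algebra_smash} is cut out of $A\# H$ by right multiplication with the idempotent $1_A\#1_H$. Concretely, I would set $e:=(\varepsilon\otimes id_{H\otimes C})\Delta\colon H\rtimes C\to H\rtimes C$, so that by definition $H\overline{\rtimes}C=e(H\rtimes C)$. Applying $\varepsilon\otimes id$ to the coproduct formula of Remark \ref{coalgebra cosmash} immediately yields $e(h\otimes c)=\varepsilon_C({c_1}^0)\,h{c_1}^1\otimes c_2$, which is the announced normal form $h\overline{\rtimes}c$ of a general element of the image; this disposes of the last sentence of the statement at once.

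The first substantive point is that $\varepsilon$ is already a \emph{right} counit for $\Delta$ on $H\rtimes C$: computing $(id_{H\otimes C}\otimes\varepsilon)\Delta(h\otimes c)$ and using axiom (i) of Definition \ref{part_comod_coalg} in the form ${c_1}^0\varepsilon_H({c_1}^1)=c_1$ gives $h\otimes c$ back. Writing the coproduct of $H\rtimes C$ in Sweedler notation $\Delta(x)=x_{(1)}\otimes x_{(2)}$, I would then prove that $e$ is idempotent purely formally: $e^2(x)=\varepsilon(x_{(1)})\varepsilon(x_{(2)(1)})x_{(2)(2)}$, which by coassociativity equals $\varepsilon(x_{(1)(1)})\varepsilon(x_{(1)(2)})x_{(2)}$, and applying the right counit to the inner comultiplication collapses this to $\varepsilon(x_{(1)})x_{(2)}=e(x)$. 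Hence $H\overline{\rtimes}C$ is precisely the fixed-point space of $e$, and in particular $e$ restricts to the identity on it.

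I would then define the comultiplication on $P:=H\overline{\rtimes}C$ to be $\overline{\Delta}:=(e\otimes e)\Delta|_{P}$, which is manifestly a well-defined map $P\to P\otimes P$. Evaluating on $h\overline{\rtimes}c=e(h\otimes c)$ and using the coassociativity identity $\Delta e=(e\otimes id)\Delta$ (proved as above) together with $e^2=e$ gives $\overline{\Delta}(e(h\otimes c))=(e\otimes e)\Delta(h\otimes c)=(h_1\overline{\rtimes}{c_1}^0)\otimes(h_2{c_1}^1\overline{\rtimes}c_2)$, matching the stated formula and simultaneously confirming it is independent of the chosen representative. Coassociativity of $\overline{\Delta}$ follows by applying $e\otimes e\otimes e$ to the coassociativity of $\Delta$, since both iterated coproducts are obtained from the two (equal) iterated coproducts of $\Delta$ by this triple projection. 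Finally, both counit axioms hold on $P$: using $\varepsilon\circ e=\varepsilon$ and $e|_{P}=id$, one gets $(\overline{\varepsilon}\otimes id)\overline{\Delta}(y)=e(\varepsilon(y_{(1)})y_{(2)})=e(y)=y$ and symmetrically on the right. I expect the main obstacle to be exactly this last passage from a one-sided to a two-sided counit: on $H\rtimes C$ the map $\varepsilon$ fails to be a left counit, and it is only after restricting to the image of the idempotent $e$ that the left counit identity is recovered, so the crux of the argument is the careful bookkeeping showing that $e$ is idempotent and acts as the identity on $H\overline{\rtimes}C$.
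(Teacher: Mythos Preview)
The paper itself does not supply a proof of this proposition; it is quoted from \cite{Batista}, so there is no in-paper argument to compare yours against. Your approach is correct and is indeed the clean abstract version of the construction: you recognize that on $H\rtimes C$ the map $\varepsilon$ is only a right counit (Remark \ref{coalgebra cosmash}), so that $e=(\varepsilon\otimes id)\Delta$ is an idempotent by coassociativity and the right counit property, and passing to its image upgrades the right counit to a two-sided one. This is exactly dual to the passage $A\#H\rightsquigarrow A\underline{\#}H$ of Lemma \ref{algebra_smash}, where one multiplies on the right by the left unit $1_A\#1_H$.

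The only place where you could be slightly more explicit is the identity $\Delta\circ e=(e\otimes id)\Delta$, which you cite as ``proved as above'' but which deserves its own one-line check from coassociativity (it is not literally the same computation as $e^2=e$, although it uses the same ingredients). Once that is in place, your verification of coassociativity via $(\overline{\Delta}\otimes id)\overline{\Delta}=(e\otimes e\otimes e)(\Delta\otimes id)\Delta=(e\otimes e\otimes e)(id\otimes\Delta)\Delta=(id\otimes\overline{\Delta})\overline{\Delta}$ and of both counit axioms via $\varepsilon\circ e=\varepsilon$ and $e|_{P}=id$ go through exactly as you indicate.
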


\subsection{Matched pair}

In \cite{Takeuchi}, M. Takeuchi constructed Hopf algebras using (global) (co)actions under certain conditions. Next we recall those results.

\begin{defi}\label{matched pair}\cite[Definition 1.1]{Takeuchi} A pair of Hopf algebras $(H,L)$ is called a matched pair if
	\begin{itemize}
	\item[(i)] $L$ is a left $H$-module algebra with action given by $h\triangleright x$;
	\item[(ii)] $H$ is a right $L$-comodule coalgebra with structure $\rho:H\longrightarrow H\otimes L$, such that $\rho(h)=h^0\otimes h^1$;
	\item[(iii)] $\rho(hg)=\rho(h_1)(g^0\otimes (h_2\triangleright g^1))$;
	\item[(iv)] $\Delta(h\triangleright x)=({h_1}^0\triangleright x_1)\otimes ({h_1}^1(h_2\triangleright x_2)) $,
	\end{itemize}
	for all $h,g\in H$ and $x\in L$. The matched pair $(H,L)$ is called \emph{abelian} if $H$ is cocommutative
	and $L$ is commutative.
	\end{defi}

\begin{lem} \label{global_pair}\cite[Lemma 1.2]{Takeuchi}
	Let $(H,L)$ be a matched pair. Then, $$\rho(1_H)=1_H\otimes 1_L \ \ \ \mbox{and} \ \ \ \varepsilon_L(h\triangleright x)=\varepsilon_H(h)\varepsilon_L(x),$$
	for all $h\in H$ and $x\in L$.
\end{lem}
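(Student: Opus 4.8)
The plan is to prove the two identities separately: I would extract $\rho(1_H)=1_H\otimes 1_L$ from item (iii) of Definition \ref{matched pair} via a grouplike-idempotent argument, and obtain the counit identity from item (iv) by collapsing with the counit of $L$ and then upgrading the resulting one-sided relation, which is the delicate point.

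First I would specialise (iii) to $h=g=1_H$. Since $1_H\triangleright y=y$ by (i), the right-hand side reads $\rho(1_H)\,({1_H}^0\otimes {1_H}^1)=\rho(1_H)\rho(1_H)$ in the tensor product algebra $H\otimes L$, so $\rho(1_H)$ is idempotent. On the other hand, being the structure map of a comodule coalgebra, $\rho$ is a morphism of coalgebras, hence it sends the grouplike element $1_H$ to a grouplike element of the Hopf algebra $H\otimes L$. As every grouplike element is invertible (its inverse being its image under the antipode of $H\otimes L$), and the only invertible idempotent is the unit, I conclude $\rho(1_H)=1_H\otimes 1_L$.

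Before the counit identity I would record an auxiliary consequence of (iv): evaluating it at $x=1_L$, using $\Delta_L(1_L)=1_L\otimes 1_L$ together with $h\triangleright 1_L=\varepsilon_H(h)1_L$ from (i), the two sides become $\varepsilon_H(h)1_L\otimes 1_L$ and $\varepsilon_H({h_1}^0)1_L\otimes \varepsilon_H(h_2){h_1}^1$; applying $\varepsilon_L\otimes id_L$ and simplifying through $\sum\rho(h_1)\varepsilon_H(h_2)=\rho(h)$ yields the globality relation $\varepsilon_H(h^0)h^1=\varepsilon_H(h)1_L$. Next, applying $id_L\otimes\varepsilon_L$ to (iv) and absorbing ${h_1}^1$ via the counit of the coaction (so that $\varepsilon_L({h_1}^1){h_1}^0=h_1$) gives the one-sided relation
\[ h\triangleright x=(h_1\triangleright x_1)\,\varepsilon_L(h_2\triangleright x_2). \]
Writing $\gamma(h\otimes x)=h\triangleright x$ and $\theta(h\otimes x)=\varepsilon_L(h\triangleright x)$, this says $\gamma=\gamma*(u_L\theta)$ in the convolution algebra $\mathrm{Hom}(H\otimes L,L)$, where $u_L$ is the unit of $L$; moreover $\theta$ is a convolution idempotent, with $\theta(1_H\otimes x)=\varepsilon_L(x)$ and $\theta(h\otimes 1_L)=\varepsilon_H(h)$.

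The hard part will be to upgrade $\gamma*(u_L\theta)=\gamma$ to the desired $\theta=\varepsilon_H\otimes\varepsilon_L$, since an idempotent functional need not be the counit. The strategy I would follow is to produce a left convolution inverse $\bar\gamma$ of the action map $\gamma$ in $\mathrm{Hom}(H\otimes L,L)$; once $\bar\gamma*\gamma=u_L(\varepsilon_H\otimes\varepsilon_L)$ is available, convolving the displayed relation on the left by $\bar\gamma$ gives $u_L\theta=u_L(\varepsilon_H\otimes\varepsilon_L)$, whence $\theta=\varepsilon_H\otimes\varepsilon_L$, which is precisely $\varepsilon_L(h\triangleright x)=\varepsilon_H(h)\varepsilon_L(x)$. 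The construction of $\bar\gamma$ is the real obstacle: it should be built from the antipode $S$ of $H$ and the module structure (the naive guess $\bar\gamma(h\otimes x)=S(h)\triangleright x$ is \emph{not} a convolution inverse for an arbitrary module algebra), and verifying the inverse property is exactly where conditions (iii) and (iv), together with $\rho(1_H)=1_H\otimes 1_L$ and the globality relation above, must be fed back in.
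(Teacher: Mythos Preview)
The paper does not give its own proof of this lemma; it merely cites \cite[Lemma 1.2]{Takeuchi}. So there is nothing to compare against, and I will simply assess your argument.

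\textbf{First identity.} Your idempotent step is fine, but the claim that ``$\rho$ is a morphism of coalgebras'' (for the tensor coalgebra $H\otimes L$) is not justified and in fact is not what the comodule--coalgebra axioms say: those axioms assert that $\Delta_H$ and $\varepsilon_H$ are $L$-comodule maps, which is \emph{not} the same as $\rho$ preserving the tensor comultiplication. Hence you have no reason to believe $\rho(1_H)$ is grouplike, and the invertibility is unproven. A clean repair: from (iii) with $h=1_H$ you get $\rho(g)=\rho(1_H)\rho(g)$ for all $g$; from global coassociativity $(\rho\otimes id_L)\rho(1_H)=(id_H\otimes\Delta_L)\rho(1_H)$, applying $id_H\otimes m_L(id_L\otimes S_L)$ to both sides yields $\sum\rho(a_i)\bigl(1_H\otimes S_L(b_i)\bigr)=1_H\otimes 1_L$ where $\rho(1_H)=\sum a_i\otimes b_i$. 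Rewriting each $\rho(a_i)$ as $\rho(1_H)\rho(a_i)$ shows the left side equals $\rho(1_H)\cdot(1_H\otimes 1_L)=\rho(1_H)$, so $\rho(1_H)=1_H\otimes 1_L$.

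\textbf{Second identity.} Your strategy is right, but you have made the obstacle look much harder than it is by guessing the wrong inverse. The map $\bar\gamma(h\otimes x)=h\triangleright S_L(x)$ is a two-sided convolution inverse of $\gamma$ using \emph{only} the module-algebra axiom and $h\triangleright 1_L=\varepsilon_H(h)1_L$: indeed
\[
(\bar\gamma*\gamma)(h\otimes x)=(h_1\triangleright S_L(x_1))(h_2\triangleright x_2)=h\triangleright\bigl(S_L(x_1)x_2\bigr)=\varepsilon_L(x)\varepsilon_H(h)1_L,
\]
and similarly on the other side. No matched-pair conditions are needed here. Convolving your relation $\gamma=\gamma*(u_L\theta)$ on the left by $\bar\gamma$ then gives $u_L\theta=u_L(\varepsilon_H\otimes\varepsilon_L)$ immediately.
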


\begin{thm}\cite[Theorem 1.4]{Takeuchi}
	Let $(H,L)$ be an abelian matched pair. Then $L\# H$ is a Hopf algebra with the structure given by the following:
	\begin{itemize}
		\item[(i)] Multiplication: $(x\# h)(y \# g)= x(h_1\triangleright y)\# h_2 g$;
		\item[(ii)] Unit: $1_L \# 1_H$;
		\item[(iii)] Comultiplication: $\Delta(x\# h)=x_1 \# {h_1}^0 \otimes x_2 {h_1}^1 \# h_2$;
		\item[(iv)] Counit: $\varepsilon(x\# h)= \varepsilon_L(x)\varepsilon_H(h)$;
		\item [(v)] Antipode: $S(x\# h)=(1_L\# S_H(h^0))(S_L(h^1)S_L(x)\#1_H)$.
	\end{itemize}
	for all $x,y\in L$ and $h,g\in H$.
\end{thm}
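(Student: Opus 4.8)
The plan is to establish that $L\#H$ is a bialgebra and then that $S$ is an antipode, proceeding through the standard hierarchy of axioms. First I would record the algebra structure: since $L$ is a left $H$-module algebra by condition (i) of Definition~\ref{matched pair}, the smash product multiplication $(x\# h)(y\# g)=x(h_1\triangleright y)\# h_2 g$ is associative with unit $1_L\#1_H$; this is the classical smash product and requires no new argument. Dually, since $H$ is a right $L$-comodule coalgebra by condition (ii), the comultiplication $\Delta(x\# h)=x_1\#{h_1}^0\otimes x_2{h_1}^1\# h_2$ is coassociative with counit $\varepsilon(x\#h)=\varepsilon_L(x)\varepsilon_H(h)$; coassociativity unwinds from the coassociativity of $L$ together with the comodule-coalgebra axioms, and the counit property follows from the counit axioms of $L$ and $H$ and the comodule counit condition.

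The crux is the compatibility between the two structures, namely that $\Delta$ and $\varepsilon$ are algebra homomorphisms. That $\varepsilon$ is multiplicative follows quickly from Lemma~\ref{global_pair}, which gives $\varepsilon_L(h\triangleright x)=\varepsilon_H(h)\varepsilon_L(x)$, and from $\rho(1_H)=1_H\otimes1_L$, which also yields $\Delta(1_L\#1_H)=(1_L\#1_H)\otimes(1_L\#1_H)$. The main obstacle is proving $\Delta((x\#h)(y\#g))=\Delta(x\#h)\,\Delta(y\#g)$. I would expand both sides: on the left, applying $\Delta$ to $x(h_1\triangleright y)\# h_2 g$ forces me to compute $\Delta_L(h_1\triangleright y)$ via the matched-pair condition (iv) and to compute $\rho((h_2 g)_1)$ via the compatibility condition (iii); on the right, I would multiply the two tensor factors using the smash product rule and the comodule-coalgebra axiom (ii). Matching the two expressions is where the abelian hypothesis is indispensable: the commutativity of $L$ is needed to commute the various elements of $L$ produced by the action, and the cocommutativity of $H$ is needed to reorder the factors of $H$ appearing in the iterated coproducts and coactions so that the two sides coincide.

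Once $L\#H$ is known to be a bialgebra, it remains to check that $S$ defined in (v) is a two-sided convolution inverse of the identity, that is, $S*\mathrm{id}=\mathrm{id}*S=(u\circ\varepsilon)$. I would compute $\sum (x\#h)_1\, S((x\#h)_2)$ directly from the formulas for $\Delta$ and $S$, substituting the expression for $S$ and simplifying with the module-algebra axioms for the $H$-action on $L$, the defining identities of the antipodes $S_H$ and $S_L$, and again the identities from Lemma~\ref{global_pair}; the reverse composite $\mathrm{id}*S$ is handled symmetrically. Here too the abelian hypothesis enters to rearrange the $L$- and $H$-components, and collapsing the resulting expression to $\varepsilon(x\#h)\,(1_L\#1_H)$ uses the counit-compatibility $\varepsilon_L(h\triangleright x)=\varepsilon_H(h)\varepsilon_L(x)$ together with $\rho(1_H)=1_H\otimes 1_L$. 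I expect the multiplicativity of $\Delta$ to be the genuinely delicate step, the antipode verification being long but mechanical once the bialgebra structure and the abelian reorderings are in hand.
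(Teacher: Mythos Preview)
The paper does not prove this theorem: it is quoted as \cite[Theorem 1.4]{Takeuchi} in the preliminaries and left without proof, so there is no argument in the paper to compare your proposal against. Your outline is the standard route and is essentially the template the authors follow later when they generalize the result to the partial setting (Proposition~\ref{delta_multi} for multiplicativity of $\Delta$, Theorem~\ref{principal} for the bialgebra structure, and Theorem~\ref{anti} for the antipode), so in that indirect sense your approach matches the paper's methodology.
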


The Hopf algebra $L\# H$ is called the \textit{bismash product of $L$ with $H$}.

\section{Partial Matched Pair}

In this section we extend the concept of matched pair to the context of partial (co)actions.

\begin{defi}\label{partial matched pair} A pair of Hopf algebras $(H,L)$ is called a \textit{partial matched pair} if
	\begin{enumerate}
		\item[(i)] $L$ is a left partial $H$-module algebra via $\rightharpoonup$;
		\item[(ii)] $H$ is a right partial $L$-comodule coalgebra via $\rho$, which is denoted by $\rho(h)= h^0 \otimes h^1$;
		\item[(iii)] $(\tau_{L,H}\otimes m_L)(id_L\otimes \tau_{L,H}\otimes id_L)(\Delta_L(h_1\rightharpoonup x)\otimes \rho(h_2g))$\\ \vspace{-0.4cm}
		\begin{flushright}
		=\ ${h_3}^0 g^0\otimes ({h_1}^0\rightharpoonup x_1)\otimes {h_1}^1(h_2\rightharpoonup x_2){h_3}^1(h_4\rightharpoonup g^1)$;
	\end{flushright}
		\item[(iv)] $\varepsilon_L(h\rightharpoonup x)=\varepsilon_L(h\rightharpoonup 1_L)\varepsilon_L(x)$;
		\item [(v)] $\rho(1_H)=1_H\otimes \varepsilon_H({1_H}^0){1_H}^1$,	
	\end{enumerate}
for all $h,g\in H$ and $x\in L$, where $\tau_{L,H}: L\otimes H\longrightarrow H\otimes L$ is the twist map.
\end{defi}

Similarly, one defines a partial matched pair $(L,H)$ considering $L$ a right partial $H$-module algebra and $H$ a left partial $L$-comodule coalgebra.

\vu

In all what follows $(H,L)$ will be considered as in  Definition \ref{partial matched pair}, unless otherwise stated.

\begin{rem}\label{troca}
	The item (iii) of the above definition can be rewritten as follows:
	\begin{eqnarray*}
	(h_2 g)^0\otimes (h_1\rightharpoonup x)_1\otimes (h_1\rightharpoonup x)_2 (h_2 g)^1&=&{h_3}^0 g^0
	\otimes ({h_1}^0\rightharpoonup x_1)\otimes \\
	&& {h_1}^1(h_2\rightharpoonup x_2){h_3}^1(h_4\rightharpoonup g^1),
	\end{eqnarray*}
	for all $h,g\in H$ and $x\in L$.
\end{rem}

\begin{pro}
	Let $(H,L)$ be a matched pair. Then $(H,L)$ is a partial matched pair.
\end{pro}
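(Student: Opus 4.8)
The plan is to verify each of the five conditions in Definition \ref{partial matched pair} directly from the data of a (global) matched pair $(H,L)$ as given in Definition \ref{matched pair}, using Lemma \ref{global_pair} as the crucial auxiliary fact. The underlying principle is that every global module algebra (resp.\ comodule coalgebra) is a partial one, so conditions (i) and (ii) come essentially for free: a left $H$-module algebra satisfies $h \rightharpoonup (g \rightharpoonup x) = (hg) \rightharpoonup x$ and $1_H \rightharpoonup x = x$, from which the partial-action axioms of Definition \ref{part_mod_alg} follow since $h \rightharpoonup 1_L = \varepsilon_H(h) 1_L$ in the global case; an analogous remark handles (ii) via the comodule-coalgebra structure $\rho$.

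\emph{First} I would dispatch conditions (iv) and (v), as these are where the global hypothesis visibly collapses the partial formulas to their classical counterparts. For (iv), Lemma \ref{global_pair} gives $\varepsilon_L(h \rightharpoonup x) = \varepsilon_H(h)\varepsilon_L(x)$; applying this same identity with $x = 1_L$ yields $\varepsilon_L(h \rightharpoonup 1_L) = \varepsilon_H(h)\varepsilon_L(1_L) = \varepsilon_H(h)$, so that $\varepsilon_L(h \rightharpoonup 1_L)\varepsilon_L(x) = \varepsilon_H(h)\varepsilon_L(x) = \varepsilon_L(h \rightharpoonup x)$, as required. For (v), Lemma \ref{global_pair} states $\rho(1_H) = 1_H \otimes 1_L$; I must check this equals the right-hand side $1_H \otimes \varepsilon_H({1_H}^0){1_H}^1$. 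Writing $\rho(1_H) = {1_H}^0 \otimes {1_H}^1 = 1_H \otimes 1_L$ and applying $\varepsilon_H \otimes \mathrm{id}_L$ to the first leg, one computes $\varepsilon_H({1_H}^0){1_H}^1 = \varepsilon_H(1_H) 1_L = 1_L$, so the right-hand side is indeed $1_H \otimes 1_L = \rho(1_H)$.

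\emph{The main obstacle} is condition (iii), which is the compatibility axiom and by far the most computationally involved. Here I would rely on the reformulation given in Remark \ref{troca}, which recasts (iii) in the equivalent and more tractable form
\begin{equation*}
(h_2 g)^0 \otimes (h_1 \rightharpoonup x)_1 \otimes (h_1 \rightharpoonup x)_2 (h_2 g)^1 = {h_3}^0 g^0 \otimes ({h_1}^0 \rightharpoonup x_1) \otimes {h_1}^1(h_2 \rightharpoonup x_2){h_3}^1(h_4 \rightharpoonup g^1).
\end{equation*}
The strategy is to expand the left-hand side using the matched-pair axioms (iii) and (iv) of Definition \ref{matched pair}: axiom (iv) expands $\Delta_L(h_1 \rightharpoonup x)$ into $({h_1}^0 \rightharpoonup x_1) \otimes {h_1}^1(h_2 \rightharpoonup x_2)$ (up to reindexing of the summation over $\Delta_H$), while axiom (iii) expands $\rho(h_2 g)$ into $\rho(h_2)(g^0 \otimes (h_3 \rightharpoonup g^1))$, i.e.\ ${h_2}^0 g^0 \otimes {h_2}^1(h_3 \rightharpoonup g^1)$. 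Substituting both and carefully tracking the coproduct indices via coassociativity in $H$ should produce exactly the right-hand side; the bookkeeping of the $\Delta_H$-indices is the delicate part, so I would set up the computation by first fixing how many times $h$ is comultiplied and aligning the legs of $\rho$ and $\rightharpoonup$ accordingly before combining terms.

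\emph{Finally}, having established (i)--(v), I would conclude that $(H,L)$ satisfies every clause of Definition \ref{partial matched pair} and is therefore a partial matched pair. I expect the whole argument to be routine apart from the index-chasing in step (iii); the reformulation in Remark \ref{troca} is what makes that step manageable, since it avoids the twist maps $\tau_{L,H}$ and multiplications $m_L$ appearing explicitly in Definition \ref{partial matched pair}(iii) and instead presents the identity in a form that matches the matched-pair axioms directly.
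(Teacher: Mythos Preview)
Your proposal is correct and follows exactly the approach the paper intends: the paper's own proof is the single line ``It follows directly from Definition \ref{matched pair} and Lemma \ref{global_pair},'' and what you have written is precisely the detailed unpacking of that sentence, verifying (i)--(v) from the global axioms with the only nontrivial step being the index-chase for (iii) via Remark \ref{troca}.
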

\begin{proof}
	It is follows directly from Definition \ref{matched pair} and Lemma \ref{global_pair}.
\end{proof}

The result below tells us when a partial matched pair is global.

\begin{thm}
Let $(H,L)$ be a partial matched pair. Then $(H,L)$ is a matched pair if and only if
	\begin{enumerate}
		\item[(i)] $h\rightharpoonup 1_L= \varepsilon_H(h)1_L$;
		\item[(ii)] $\varepsilon_H(h^0) h^1=\varepsilon_H(h)1_L$,
	\end{enumerate}
 hold for all $h\in H$.
\end{thm}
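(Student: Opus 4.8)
The two hypotheses are precisely the globality criteria already isolated in the text: by the observation following Definition~\ref{part_mod_alg}, condition (i) holds if and only if the partial action $\rightharpoonup$ makes $L$ a (global) left $H$-module algebra, and by the observation following Definition~\ref{part_comod_coalg} (applied with comodule coalgebra $H$ and coacting Hopf algebra $L$, so that the criterion $\varepsilon_C(c^{0})c^{1}=\varepsilon_C(c)1_H$ reads $\varepsilon_H(h^{0})h^{1}=\varepsilon_H(h)1_L$), condition (ii) holds if and only if the partial coaction $\rho$ makes $H$ a (global) right $L$-comodule coalgebra. So the plan is to show that, for a partial matched pair, being a matched pair is equivalent to both structures being simultaneously global.

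For the direct implication, assume $(H,L)$ is a matched pair. Then $L$ is a global left $H$-module algebra and $H$ a global right $L$-comodule coalgebra by Definition~\ref{matched pair}(i)--(ii), so the two globality criteria, used in the direction from globality to the stated identity, yield (i) and (ii) at once.

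For the converse I assume (i) and (ii). The globality criteria then furnish Definition~\ref{matched pair}(i)--(ii), so only the two compatibility axioms remain to be deduced from the single partial compatibility, which I take in the rewritten form of Remark~\ref{troca}. I first record that (ii) combined with Definition~\ref{partial matched pair}(v) forces $\rho(1_H)=1_H\otimes 1_L$. To obtain Definition~\ref{matched pair}(iv), I specialize the identity of Remark~\ref{troca} at $g=1_H$ and apply $\varepsilon_H$ to the $H$-leg: on the left, $\varepsilon_H({h_2}^{0}){h_2}^{1}=\varepsilon_H(h_2)1_L$ by (ii) collapses the comodule legs and returns $\Delta_L(h\rightharpoonup x)$; on the right, the term $h_4\rightharpoonup 1_L=\varepsilon_H(h_4)1_L$ is absorbed by (i), and a second use of (ii) as $\varepsilon_H({h_3}^{0}){h_3}^{1}=\varepsilon_H(h_3)1_L$ leaves exactly $({h_1}^{0}\rightharpoonup x_1)\otimes {h_1}^{1}(h_2\rightharpoonup x_2)$. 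To obtain Definition~\ref{matched pair}(iii), I specialize the same identity at $x=1_L$: condition (i) in the form $h\rightharpoonup 1_L=\varepsilon_H(h)1_L$ trivializes every $L$-entry coming from $x$, so the middle tensor slot reduces to $1_L$ on both sides; applying $\varepsilon_L$ to that slot and using (ii) once more as $\varepsilon_H({h_1}^{0}){h_1}^{1}=\varepsilon_H(h_1)1_L$ to collapse the surviving Sweedler legs, the identity becomes $\rho(hg)={h_1}^{0}g^{0}\otimes {h_1}^{1}(h_2\rightharpoonup g^{1})=\rho(h_1)\bigl(g^{0}\otimes(h_2\rightharpoonup g^{1})\bigr)$, which is the desired axiom with $\rightharpoonup$ playing the role of $\triangleright$.

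The forward direction and the preliminary identity $\rho(1_H)=1_H\otimes 1_L$ are routine. The main obstacle is the converse, and within it the careful tracking of the iterated coproduct of $h$ (with up to four Sweedler legs $h_1,h_2,h_3,h_4$) through the successive counit collapses: one must check that each application of (i) and (ii) deposits the surviving $H$- and $L$-legs into exactly the tensor slots demanded by Definition~\ref{matched pair}(iii)--(iv), and that no spurious factor of $\varepsilon_H$ or $1_L$ is left behind.
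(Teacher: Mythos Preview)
Your proof is correct and follows essentially the same route as the paper: both arguments specialize the partial compatibility identity of Remark~\ref{troca} at $x=1_L$ (to recover Definition~\ref{matched pair}(iii)) and at $g=1_H$ (to recover Definition~\ref{matched pair}(iv)), then repeatedly collapse the extra Sweedler legs using (i) and (ii). The only cosmetic differences are that you explicitly record $\rho(1_H)=1_H\otimes 1_L$ and frame each step as ``specialize, then simplify both sides'', whereas the paper presents each computation as a single left-to-right chain of equalities.
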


\begin{proof}
	If $(H,L)$ is a matched pair, it is clear that we have the items (i) and (ii). Conversely, supposing that the items (i) and (ii) are satisfied, we need to show that,
	$$\rho(hg)= {h_1}^0 g^0 \otimes {h_1}^1 (h_2\rightharpoonup g^1) \ 	\mbox{and} \ \Delta (h\rightharpoonup x)= ({h_1}^0\rightharpoonup x_1) \otimes {h_1}^1 (h_2\rightharpoonup x_2),$$
	for all $h,g\in H$ and $x\in L$. Indeed,
	\begin{eqnarray*}
		(hg)^0 \otimes 1_L \otimes (hg)^1 &=& (h_2g)^0 \otimes \varepsilon_H(h_1)1_L \otimes 1_L (h_2 g)^1\\
		&=& (h_2 g)^0 \otimes (h_1\rightharpoonup 1_L)_1 \otimes (h_1\rightharpoonup 1_L)_2 (h_2 g)^1\\
		&\stackrel{\ref{troca}}{=}& {h_3}^0 g^0 \otimes ({h_1}^0\rightharpoonup 1_L) \otimes {h_1}^1 (h_2\rightharpoonup 1_L){h_3}^1 (h_4\rightharpoonup g^1)\\
		&=& {h_1}^0 g^0 \otimes 1_L \otimes {h_1}^1 (h_2 \rightharpoonup g^1),
	\end{eqnarray*}
	thus $\rho(hg)= {h_1}^0 g^0 \otimes {h_1}^1 (h_2\rightharpoonup g^1).$ And,
	\begin{eqnarray*}
		\Delta(h\rightharpoonup x) &=& (h\rightharpoonup x)_1 \otimes (h\rightharpoonup x)_2\\
		&=& \varepsilon_H({h_2}^0)(h_1\rightharpoonup x)_1 \otimes (h_1 \rightharpoonup x)_2 {h_2}^1\\
		&\stackrel{\ref{troca}}{=}& \varepsilon_H({h_3}^0 {1_H}^0)({h_1}^0 \rightharpoonup x_1)\otimes {h_1}^1 (h_2\rightharpoonup x_2) {h_3}^1 (h_4\rightharpoonup {1_H}^1)\\
		&=& ({h_1}^0 \rightharpoonup x_1)\otimes {h_1}^1 (h_2\rightharpoonup x_2).
	\end{eqnarray*}\end{proof}

\begin{defi} \label{defabeliano} We say that $(H,L)$ is a \textit{quasi-abelian} partial matched pair if
	\begin{equation*}
	{h_2}^0\otimes (h_1\rightharpoonup x) {h_2}^1 = {h_1}^0\otimes {h_1}^1(h_2\rightharpoonup x),
	\end{equation*}
	for all $h\in H$ and $x\in L$. Clearly, abelian matched pair is quasi-abelian.
\end{defi}

\begin{lem}\label{exemplo} Let $L$ be the left partial $H$-module algebra given by Example \ref{mod_acao_corpo} and let $H$ be the right partial $L$-comodule coalgebra of Example \ref{prop cc}.	 Then $(H,L)$ is a quasi-abelian partial matched pair if and only if $z$ is a central element in $L$ and $\lambda(h_1)h_2 = h_1\lambda(h_2)$, for all $h\in H$.
\end{lem}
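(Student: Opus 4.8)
The plan is to substitute the two explicit structures directly into the relevant identities. Writing $\rho(h)=h\otimes z$ (so that $h^0=h$ and $h^1=z$ for every $h$) and $h\rightharpoonup x=\lambda(h)x$, the quasi-abelian condition of Definition \ref{defabeliano} collapses to the single equation $\lambda(h_1)\,h_2\otimes xz=\lambda(h_2)\,h_1\otimes zx$, for all $h\in H$ and $x\in L$. The whole argument then reduces to extracting, respectively imposing, the two asserted constraints from this equation and from the partial matched pair axioms (iii)--(v) of Definition \ref{partial matched pair}. The one auxiliary fact I will use repeatedly is that the defining relation of $\lambda$ in Example \ref{mod_acao_corpo} forces $\lambda\ast\lambda=\lambda$, i.e. $\lambda(h_1)\lambda(h_2)=\lambda(h)$: this follows at once by setting $g=1_H$ in $\lambda(h)\lambda(g)=\lambda(h_1)\lambda(h_2 g)$ together with $\lambda(1_H)=1_{\Bbbk}$, and by iteration it yields $\lambda(h_1)\cdots\lambda(h_n)=\lambda(h)$.

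For the ``only if'' direction I would use only the quasi-abelian identity above, which is part of the hypothesis. First specialize $x=1_L$, obtaining $\lambda(h_1)h_2\otimes z=\lambda(h_2)h_1\otimes z$; applying $\mathrm{id}\otimes\varepsilon_L$ and recalling $\varepsilon_L(z)=1_{\Bbbk}$ from Example \ref{prop cc} yields $\lambda(h_1)h_2=h_1\lambda(h_2)$, which is the second asserted condition. Feeding this back into the quasi-abelian equation leaves $\lambda(h_2)h_1\otimes(xz-zx)=0$ for all $h,x$; specializing $h=1_H$, where $\lambda(1_H)1_H=1_H\neq 0$, forces $xz=zx$ for every $x\in L$, i.e. $z$ is central.

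For the ``if'' direction I would assume $z$ central and $\lambda(h_1)h_2=h_1\lambda(h_2)$ and verify that all the axioms of a quasi-abelian partial matched pair hold. Axioms (iv) and (v) of Definition \ref{partial matched pair} are immediate from $\varepsilon_L(1_L)=1_{\Bbbk}$ and $\varepsilon_H(1_H)=1_{\Bbbk}$ and need no hypothesis, while the quasi-abelian identity itself becomes a one-line consequence of centrality of $z$ and of $\lambda(h_1)h_2=\lambda(h_2)h_1$. The substantive point is axiom (iii): using the rewriting of Remark \ref{troca}, its left-hand side becomes $\lambda(h_1)\,h_2g\otimes x_1\otimes x_2 z$, and its right-hand side becomes $\lambda(h_1)\lambda(h_2)\lambda(h_4)\,h_3g\otimes x_1\otimes z x_2 z$. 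Centrality together with $z^2=z$ (Example \ref{prop cc}) reduces the tail $zx_2z$ to $x_2z$, so the identity reduces to the purely $H$-theoretic equality $\lambda(h_1)h_2=\lambda(h_1)\lambda(h_2)\lambda(h_4)h_3$.

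The main obstacle is exactly this last equality, which I would settle by Sweedler bookkeeping built on $\lambda\ast\lambda=\lambda$ and the relation $\lambda(h_1)h_2=\lambda(h_2)h_1$. Merging the first two legs of the three-fold coproduct via $\lambda\ast\lambda=\lambda$ rewrites the right-hand side as $\lambda(h_1)\lambda(h_3)h_2$; applying $\lambda(h_1)h_2=\lambda(h_2)h_1$ to the ``$h_2,h_3$'' block turns this into $\lambda(h_1)\lambda((h_2)_1)(h_2)_2$; and a final merge of the first two legs via $\lambda\ast\lambda=\lambda$ returns $\lambda(h_1)h_2$, as required. I expect the coassociativity relabelling in this telescoping to be the only delicate part; every other step is a direct substitution.
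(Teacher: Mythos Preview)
Your proposal is correct and follows essentially the same route as the paper: both extract the two constraints directly from the quasi-abelian identity in the ``only if'' direction, and for the ``if'' direction both reduce axiom (iii) to a Sweedler bookkeeping identity handled via the convolution-idempotence $\lambda\ast\lambda=\lambda$, the swap $\lambda(h_1)h_2=h_1\lambda(h_2)$, centrality of $z$, and $z^2=z$. The only cosmetic difference is that the paper transforms the left-hand side of (iii) into the right-hand side while you go the other way, and you spell out the auxiliary fact $\lambda\ast\lambda=\lambda$ that the paper uses tacitly.
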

\begin{proof} Suppose that $(H,L)$ is a quasi-abelian partial matched pair. It is follows directly from Definition \ref{defabeliano} that $z$ is a central element in $L$ and $\lambda(h_1)h_2 = h_1\lambda(h_2)$, for all $h\in H$. Conversely, to prove that $(H,L)$ is a quasi-abelian partial matched pair, it is enough to show that the condition (iii) of Definition \ref{partial matched pair}, the others are immediate. Let $h,g\in H$ and $x\in L$,
	\begin{eqnarray*}
		(h_2g)^0 \otimes (h_1 \rightharpoonup x)_1 \otimes (h_1 \rightharpoonup x)_2 (h_2g)^1 &=& \lambda(h_1)h_2g \otimes x_1 \otimes x_2z\\
		&=& \lambda(h_1)\lambda(h_2)\lambda(h_3)h_4g \otimes x_1 \otimes x_2z^3\\
		&=& \lambda(h_1)\lambda(h_2)h_3g\lambda(h_4)\otimes x_1 \otimes zx_2z^2\\
		&=& {h_3}^0g^0 \otimes ({h_1}^0 \rightharpoonup x_1) \\
		&& \otimes \ {h_1}^1(h_2 \rightharpoonup x_2) {h_3}^1(h_4 \rightharpoonup g^1).
	\end{eqnarray*}
	
	Furthermore, we have
	\begin{eqnarray*}
		{h_2}^0\otimes (h_1\rightharpoonup x) {h_2}^1 &=& h_2\otimes \lambda(h_1)xz\\
		&=& h_1 \otimes \lambda(h_2)zx\\
		&=& {h_1}^0\otimes {h_1}^1(h_2\rightharpoonup x).
	\end{eqnarray*}
	Thus, the proof is complete.
\end{proof}

Before to proceed to the next proposition, recall the map defined in Remark \ref{coalgebra cosmash}, namely $\Delta: H \otimes C \longrightarrow (H \otimes C) \otimes (H \otimes C)$ given by  $\Delta(h\otimes c)=(h_1\otimes {c_1}^0)\otimes (h_2{c_1}^1\otimes c_2)$.
In particular, $H \otimes C = H \# C$ as vector spaces, then it makes sense to consider $\Delta: H \# C \longrightarrow (H \# C) \otimes (H \# C)$ given by $\Delta(h \# c)=\Delta(h \otimes c)$.
In the same way, we may consider $\varepsilon$ defined in $H \# L$ as the same as defined in $ H \otimes L$.
Until now, the Lemma \ref{algebra_smash} and Proposition \ref{coalgebra cosmash2} are used to build structures of unital algebra and counital coalgebra in certain subspaces of $A \otimes H$ and $H \otimes C$, respectively.
What we want is to build a Hopf algebra structure in a certain subspace of $H \otimes L$ using precisely such two previous constructions, and so
the structure maps we consider are the same as those used therein.

 \begin{pro}\label{delta_multi}
	Let $(H,L)$ be a quasi-abelian partial matched pair and consider the algebra $L\# H$. Then the linear map $\Delta$ defined in Remark \ref{coalgebra cosmash} is multiplicative.
\end{pro}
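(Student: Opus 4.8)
The claim is the algebra-map identity $\Delta\bigl((x\# h)(y\# g)\bigr)=\Delta(x\# h)\,\Delta(y\# g)$ for all $x,y\in L$ and $h,g\in H$, where the right-hand product is computed in the tensor-square algebra $(L\# H)\otimes(L\# H)$ and $\Delta$ is the smash coproduct inherited from Remark \ref{coalgebra cosmash} with $L$ in the role of the Hopf algebra and $H$ in the role of the comodule coalgebra, that is, $\Delta(x\# h)=(x_1\#{h_1}^0)\otimes(x_2{h_1}^1\# h_2)$. The plan is to expand both sides by the definitions and to match them term by term using the axioms of Definition \ref{partial matched pair} together with the quasi-abelian hypothesis of Definition \ref{defabeliano}.

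For the left-hand side I would first compute the smash product $(x\# h)(y\# g)=x(h_1\rightharpoonup y)\#\,h_2g$ and then apply $\Delta$. Using that $\Delta_L$ is multiplicative on the $L$-factor $x(h_1\rightharpoonup y)$ and that $\Delta_H$ is multiplicative on $h_2g$, the left-hand side becomes an expression in which the coaction is applied to the first comultiplication component $h_2g_1$ of $h_2g$, coupled with the comultiplication of the action term $(h_1\rightharpoonup y)$. This is precisely the combination appearing on the left-hand side of Remark \ref{troca} with $g$ replaced by $g_1$; applying that identity (and coassociativity to furnish the extra Sweedler legs of $h$) splits the single coaction on $h_2g_1$ into independent coactions on a leg of $h$ and on $g_1$ and distributes the action over $y_1$ and $y_2$.

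For the right-hand side I would expand $\Delta(x\# h)\,\Delta(y\# g)$ directly, multiplying componentwise in $(L\# H)^{\otimes 2}$. Each of the two resulting smash products is expanded by the multiplication rule; this forces an application of the partial-action axiom (ii) of Definition \ref{part_mod_alg} to break $h\rightharpoonup(y_2{g_1}^1)$ into a product of two actions, while the factor $\Delta_H({h_1}^0)$ produced by the first multiplication is rewritten through the comodule-coalgebra axiom (ii) of Definition \ref{part_comod_coalg}, namely $(\Delta_H\otimes id_L)\rho(h)={h_1}^0\otimes{h_2}^0\otimes{h_1}^1{h_2}^1$. After reindexing the Sweedler components of $h,g,x,y$ and repeated use of coassociativity and of the multiplicativity of $\Delta_H$, this side takes the same general shape as the transformed left-hand side.

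The decisive step, and the one I expect to be the main obstacle, is the reconciliation of the two expressions. In the partial setting there is no closed formula for $\Delta_L(h\rightharpoonup x)$ in isolation: everything is coupled through Remark \ref{troca}, so the index bookkeeping must be organised so that exactly the governed combination is produced on the left-hand side. The residual discrepancy between the two sides is an interchange inside $L$ of a coaction tag ${h}^1$ with an action term $h\rightharpoonup(\cdot)$, accompanied by a shift of the coaction from one Sweedler leg of $h$ to another. This is resolved by the quasi-abelian identity ${h_2}^0\otimes(h_1\rightharpoonup x){h_2}^1={h_1}^0\otimes{h_1}^1(h_2\rightharpoonup x)$, which simultaneously reverses an order of multiplication in $L$ and transfers the coaction between Sweedler legs of $h$; it thereby plays the combined role of the commutativity of $L$ and the cocommutativity of $H$ used in Takeuchi's classical argument. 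Once every tensor slot is matched in this way, the asserted multiplicativity of $\Delta$ follows.
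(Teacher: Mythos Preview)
Your proposal is correct and follows essentially the same route as the paper's proof: expand the product, apply $\Delta$, invoke Remark~\ref{troca} to split the coaction on $h_2g_1$, use the quasi-abelian identity of Definition~\ref{defabeliano} to commute the action term past ${h}^1$ while shifting the coaction leg, and then use Definition~\ref{part_comod_coalg}(ii) together with Definition~\ref{part_mod_alg}(ii) to repackage the result as $\Delta(x\# h)\Delta(y\# g)$. The only cosmetic difference is that the paper runs a single chain from the left-hand side to the right-hand side rather than expanding both sides separately, but the ingredients and their roles are exactly the ones you identify.
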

\begin{proof}
	Indeed, for all $x,y\in L$ and $h,g\in H$,
	\begin{eqnarray*}
		\Delta((x\# h)(y\# g))\hspace{-0.2cm}&=& \Delta(x(h_1\rightharpoonup y)\# h_2g)\\
		&=& x_1(h_1\rightharpoonup y)_1\# (h_2g_1)^0\otimes x_2(h_1\rightharpoonup y)_2(h_2g_1)^1\# h_3g_2\\
		&\stackrel{\ref{troca}}{=}& x_1({h_1}^0\rightharpoonup y_1)\# {h_3}^0 {g_1}^0 \otimes x_2 {h_1}^1(h_2\rightharpoonup y_2){h_3}^1(h_4\rightharpoonup {g_1}^1)\# h_5 g_2\\
		&\stackrel{3.5}{=}& x_1({h_1}^0\rightharpoonup y_1)\# {h_2}^0 {g_1}^0 \otimes x_2 {h_1}^1{h_2}^1(h_3\rightharpoonup y_2)(h_4\rightharpoonup {g_1}^1)\# h_5 g_2\\
		&=& x_1({{h_1}^0}_1\rightharpoonup y_1)\# {{h_1}^0}_2{g_1}^0\otimes x_2{h_1}^1(h_2\rightharpoonup y_2{g_1}^1)\# h_3g_2\\
		&=& (x_1\# {h_1}^0)(y_1\# {g_1}^0)\otimes (x_2{h_1}^1\# h_2)(y_2 {g_1}^1\# g_2)\\
		&=& \Delta(x\# h)\Delta(y\# g).
	\end{eqnarray*}
\end{proof}

\begin{lem}\label{igualdade importante} Let $(H,L)$ be a quasi-abelian partial matched pair. Then
	\begin{enumerate}
		\item [(i)] $(h\rightharpoonup 1_L)=(h_1\rightharpoonup 1_L)\varepsilon_L(h_2 \rightharpoonup 1_L)= \varepsilon_L(h_1\rightharpoonup 1_L)(h_2\rightharpoonup 1_L)$;
		\item [(ii)]  $x_1(h_1 \rightharpoonup 1_L)_1\otimes {h_2}^0\otimes x_2(h_1 \rightharpoonup 1_L)_2{h_2}^1\otimes h_3=x_1({h_1}^0 \rightharpoonup 1_L)\otimes {h_2}^0 	\\
	 	\otimes x_2{h_1}^1{h_2}^1(h_3 \rightharpoonup 1_L)\otimes h_4$,
	\end{enumerate}
for all $x\in L$ and $h\in H$.
	\end{lem}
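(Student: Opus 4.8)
The plan is to handle the two items separately: item (i) comes out of the matched-pair compatibility (Remark~\ref{troca}) by applying two counits in the right order, and then item (i) together with the quasi-abelian relation settles item (ii).

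For (i), I would first record the idempotency $h\rightharpoonup 1_L=(h_1\rightharpoonup 1_L)(h_2\rightharpoonup 1_L)$, which is immediate from items (i)--(ii) of Definition~\ref{part_mod_alg} applied to $1_L=1_L1_L$. Writing $\chi(h):=\varepsilon_L(h\rightharpoonup 1_L)$, multiplicativity of $\varepsilon_L$ then gives $\chi(h)=\chi(h_1)\chi(h_2)$. The key step is to apply $\varepsilon_H\otimes\mathrm{id}_L\otimes\varepsilon_L$ to the identity of Remark~\ref{troca}: on the left, the comodule counit (Definition~\ref{part_comod_coalg}(i)) and the counit of $L$ collapse the outer legs; on the right, item (iv) of Definition~\ref{partial matched pair} turns each $\varepsilon_L(h_i\rightharpoonup-)$ into a factor $\chi$, and the convolution idempotency of $\chi$ absorbs the surplus (keeping $g$ generic so that the common factor $\varepsilon_H(g)=\varepsilon_H(g^0)\varepsilon_L(g^1)$ cancels). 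The outcome is the stronger identity $h\rightharpoonup x=(h_1\rightharpoonup x)\varepsilon_L(h_2\rightharpoonup 1_L)$ for all $x$, and $x=1_L$ is the first equality of (i).

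For the second equality of (i), I would apply $\mathrm{id}_H\otimes\varepsilon_L$ to the quasi-abelian relation of Definition~\ref{defabeliano} with $x=1_L$; using Definition~\ref{part_comod_coalg}(i) and the multiplicativity of $\varepsilon_L$ this yields $\chi(h_1)h_2=\chi(h_2)h_1$ in $H$. Applying the linear map $(-\rightharpoonup 1_L)$ gives $\chi(h_1)(h_2\rightharpoonup 1_L)=\chi(h_2)(h_1\rightharpoonup 1_L)$, which identifies $(h_1\rightharpoonup 1_L)\chi(h_2)$ with $\chi(h_1)(h_2\rightharpoonup 1_L)$, so the two right-hand sides of (i) coincide. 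Thus (i) is routine once the counits are applied in the correct order.

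For (ii), since $x$ enters only through $\Delta_L(x)=x_1\otimes x_2$ tensored onto the first and third legs, I would reduce to $x=1_L$. I would then rewrite the block ${h_1}^0\otimes{h_2}^0\otimes{h_1}^1{h_2}^1$ on the right-hand side by the comodule-coalgebra axiom Definition~\ref{part_comod_coalg}(ii), which reads $(\Delta_H\otimes\mathrm{id}_L)\rho(h)={h_1}^0\otimes{h_2}^0\otimes{h_1}^1{h_2}^1$, so that the four-fold identity collapses to the three-fold identity
\[
(h_1\rightharpoonup 1_L)_1\otimes{h_2}^0\otimes(h_1\rightharpoonup 1_L)_2\,{h_2}^1=(({h_1}^0)_1\rightharpoonup 1_L)\otimes({h_1}^0)_2\otimes{h_1}^1(h_2\rightharpoonup 1_L).
\]
This last identity I would establish by a direct Sweedler computation combining the quasi-abelian relation of Definition~\ref{defabeliano} (with $x=1_L$), its $\Delta_H$-comultiplied form on the $H$-leg, the map $(-\rightharpoonup 1_L)$, and part (i) to reconcile the remaining $\Delta_L$-coproducts of the idempotents $h\rightharpoonup 1_L$. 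A more transparent organizing principle is that the left-hand side of (ii) is exactly $\Delta\big((x\#h)(1_L\#1_H)\big)$, so Proposition~\ref{delta_multi} allows me to split it as $\Delta(x\#h)\,\Delta(1_L\#1_H)$ and regroup before comparing with the right-hand side. The main obstacle is precisely this interchange of the partial action $h\rightharpoonup 1_L$ (and, crucially, of its $L$-coproduct) past the partial coaction $\rho$ inside a four-fold tensor: this is the one place where the quasi-abelian hypothesis is indispensable, and where one must simultaneously track $\Delta_L$ on the idempotent legs, $\Delta_H$ on the coaction leg, and the repeated use of item (iv) and part (i).
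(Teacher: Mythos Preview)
Your treatment of (i) is essentially the paper's: apply $\varepsilon_H\otimes\mathrm{id}_L\otimes\varepsilon_L$ to the compatibility of Remark~\ref{troca} (with $g=1_H$, $x=1_L$), then use item~(iv) of Definition~\ref{partial matched pair} and the convolution idempotency of $\chi$. Your derivation of the second equality via $\chi(h_1)h_2=h_1\chi(h_2)$ is a pleasant shortcut; the paper instead inserts the first equality directly into the quasi-abelian relation and reads off the swap. Both are fine.

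For (ii) there is a genuine gap. Your ``organizing principle'' via Proposition~\ref{delta_multi} is a good observation and, unwinding $\Delta(x\#h)\Delta(1_L\#1_H)$ using Definition~\ref{partial matched pair}(v), it yields
\[
x_1(h_1\rightharpoonup 1_L)_1\#{h_2}^0\otimes x_2(h_1\rightharpoonup 1_L)_2{h_2}^1\#h_3
=
x_1({h_1}^0\rightharpoonup 1_L)\#{h_2}^0\otimes x_2{h_1}^1{h_2}^1\bigl(h_3\rightharpoonup z\bigr)\#h_4,
\]
where $z=\varepsilon_H({1_H}^0){1_H}^1$. This is \emph{not} the right-hand side of (ii): you still have $h_3\rightharpoonup z$ in place of $h_3\rightharpoonup 1_L$, and in general $h\rightharpoonup z\neq h\rightharpoonup 1_L$ (already in the setting of Examples~\ref{mod_acao_corpo}/\ref{prop cc} one has $h\rightharpoonup z=\lambda(h)z$ versus $h\rightharpoonup 1_L=\lambda(h)1_L$). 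What makes the two four-tensors agree is not part~(i) nor item~(iv) alone; it is a \emph{second}, reverse application of the matched-pair compatibility (Remark~\ref{troca}) that absorbs the dangling ${1_H}^0,{1_H}^1$. Concretely, the paper first expands $({h_1}^0\rightharpoonup 1_L)$ via part~(i) as $({{h_1}^0}_1\rightharpoonup 1_L)\varepsilon_L({{h_1}^0}_2\rightharpoonup 1_L)$, opens the indices with Definition~\ref{part_comod_coalg}(ii), and then recognizes the block
\[
\varepsilon_L({h_2}^0\rightharpoonup 1_L)\otimes{h_4}^0{1_H}^0\otimes{h_2}^1(h_3\rightharpoonup 1_L){h_4}^1(h_5\rightharpoonup{1_H}^1)
\]
as the \emph{right-hand side} of Remark~\ref{troca} (with $x=1_L$, $g=1_H$), replacing it by the left-hand side $\varepsilon_L((h_2\rightharpoonup 1_L)_1)\otimes{h_3}^0\otimes(h_2\rightharpoonup 1_L)_2{h_3}^1$. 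Only then does a final use of the quasi-abelian relation give the claimed right-hand side of (ii). Your sketch invokes neither this reverse use of Remark~\ref{troca} nor any substitute for it, and the phrase ``part~(i) to reconcile the remaining $\Delta_L$-coproducts'' does not cover it: part~(i) controls $\varepsilon_L(h\rightharpoonup 1_L)$, not $\Delta_L(h\rightharpoonup 1_L)$, and the only source of information about $\Delta_L(h\rightharpoonup x)$ in the whole setup is precisely condition~(iii) of Definition~\ref{partial matched pair}.
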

\begin{proof} (i) Let $h\in H$,
	\begin{eqnarray*}
		h \rightharpoonup 1_L\hspace{-0.4cm} &=& \hspace{-0.3cm}\varepsilon_H({h_2}^0) (h_1\rightharpoonup 1_L)_1 \varepsilon_L((h_1\rightharpoonup 1_L)_2) \varepsilon_L({h_2}^1)\\
		&=&\hspace{-0.3cm} (\varepsilon_H \otimes id_L\otimes \varepsilon_L) ({h_2}^0 \otimes (h_1\rightharpoonup 1_L)_1 \otimes (h_1\rightharpoonup 1_L)_2 {h_2}^1)\\
		&\stackrel{\ref{troca}}{=}& \hspace{-0.3cm}(\varepsilon_H \otimes id_L \otimes \varepsilon_L)({h_3}^0 {1_H}^0\otimes ({h_1}^0 \rightharpoonup 1_L)\otimes {h_1}^1(h_2 \rightharpoonup 1_L){h_3}^1 (h_4 \rightharpoonup {1_H}^1))\\
		&\stackrel{\ref{partial matched pair}}{=}& \hspace{-0.3cm} \varepsilon_H({h_3}^0 {1_H}^0) ({h_1}^0 \rightharpoonup \hspace{-0.05cm}1_L) \varepsilon_L({h_1}^1) \varepsilon_L(h_2 \rightharpoonup \hspace{-0.05cm}1_L) \varepsilon_L({h_3}^1) \varepsilon_L(h_4 \rightharpoonup \hspace{-0.05cm}1_L)\varepsilon_L({1_H}^1)\\
		&=& \hspace{-0.3cm}\varepsilon_H(h_3 1_H) (h_1\rightharpoonup 1_L)\varepsilon_L(h_2\rightharpoonup 1_L)\varepsilon_L(h_4\rightharpoonup 1_L)\\
		&=& \hspace{-0.3cm}(h_1\rightharpoonup 1_L)\varepsilon_L(h_2\rightharpoonup 1_L).
	\end{eqnarray*}
	And,
	\begin{eqnarray*}
		h\rightharpoonup 1_L &=& (h_1\rightharpoonup 1_L)\varepsilon_L(h_2\rightharpoonup 1_L)=
	({h_1}^0\rightharpoonup 1_L)\varepsilon_L({h_1}^1(h_2\rightharpoonup 1_L))\\
		&\stackrel{3.5}{=}&({h_2}^0\rightharpoonup 1_L)\varepsilon_L((h_1\rightharpoonup 1_L){h_2}^1)=
	(h_2\rightharpoonup 1_L)\varepsilon_L(h_1\rightharpoonup 1_L).
	\end{eqnarray*}	

\vd
	
	(ii) Let $h\in H$ and $x\in L$,
	\begin{eqnarray*}
		&\hspace{-0.4cm}&\hspace{-0.65cm} x_1(h_1 \rightharpoonup 1_L)_1\otimes {h_2}^0\otimes x_2(h_1 \rightharpoonup 1_L)_2{h_2}^1\otimes h_3\\
		&\hspace{-0.4cm}\stackrel{\ref{troca}}{=}& \hspace{-0.3cm}x_1({h_1}^0 \rightharpoonup 1_L)\otimes {h_3}^0 {1_H}^0\otimes x_2{h_1}^1(h_2 \rightharpoonup 1_L){h_3}^1(h_4 \rightharpoonup {1_H}^1) \otimes h_5\\
		&\hspace{-0.4cm}\stackrel{\ref{igualdade importante}}{=}& \hspace{-0.3cm}x_1({{h_1}^0}_1 \rightharpoonup 1_L)\varepsilon_L({{h_1}^0}_2 \rightharpoonup 1_L)\otimes {h_3}^0 {1_H}^0 \otimes x_2 {h_1}^1(h_2 \rightharpoonup 1_L){h_3}^1 (h_4 \rightharpoonup {1_H}^1)\otimes h_5\\
		&\hspace{-0.4cm}=& \hspace{-0.3cm}x_1({h_1}^0 \rightharpoonup\hspace{-0.03cm} 1_L) \varepsilon_L({h_2}^0 \rightharpoonup \hspace{-0.03cm}1_L) \otimes {h_4}^0 {1_H}^0\otimes x_2 {h_1}^1 {h_2}^1 (h_3 \rightharpoonup \hspace{-0.03cm}1_L){h_4}^1 (h_5 \rightharpoonup \hspace{-0.03cm}{1_H}^1) \otimes h_6\\
		&\hspace{-0.4cm}\stackrel{\ref{troca}}{=}&  \hspace{-0.3cm}x_1({h_1}^0 \rightharpoonup 1_L) \varepsilon_L((h_2\rightharpoonup 1_L)_1)\otimes {h_3}^0 \otimes x_2 {h_1}^1 (h_2\rightharpoonup 1_L)_2 {h_3}^1\otimes h_4\\	
		&\hspace{-0.4cm}=&\hspace{-0.3cm} x_1 ({h_1}^0\rightharpoonup 1_L) \otimes {h_3}^0 \otimes x_2{h_1}^1 (h_2\rightharpoonup 1_L){h_3}^1\otimes h_4\\
		&\hspace{-0.4cm}\stackrel{3.5}{=} & \hspace{-0.3cm} x_1({h_1}^0 \rightharpoonup 1_L)\otimes {h_2}^0 \otimes x_2{h_1}^1{h_2}^1(h_3 \rightharpoonup 1_L)\otimes h_4.
	\end{eqnarray*}
	\end{proof}
As a consequence of this previous lemma we have the following result.

\begin{pro}\label{barra embaixo coalgebra}  Let $(H,L)$ be a quasi-abelian partial matched pair. Then $L\underline{\#} H$ is a unital algebra and a right counital coalgebra with the following structure maps
$$\Delta(x\underline{\#}h)=x_1\underline{\#}{h_1}^0\otimes x_2{h_1}^1\underline{\#}h_2  \ \ \mbox{and} \ \ \varepsilon(x\underline{\#}h)=\varepsilon_L(x)\varepsilon_L(h \rightharpoonup 1_L),$$
for all $h\in H$ and $x\in L$. Moreover, the linear map $\Delta$ is multiplicative and  $\varepsilon$ is  a morphism of algebras.
\end{pro}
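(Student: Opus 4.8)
The plan is to realize $L\underline{\#}H$ as the subspace $(L\#H)(1_L\#1_H)$ of $L\otimes H$ and to obtain every piece of structure by restricting maps already available on $L\otimes H$. The unital algebra structure is nothing new: it is precisely Lemma \ref{algebra_smash}, with unit $1_L\underline{\#}1_H$ and product $(x\underline{\#}h)(y\underline{\#}g)=x(h_1\rightharpoonup y)\underline{\#}h_2g$. For the coalgebra maps I take $\Delta$ and $\varepsilon$ to be exactly those of Remark \ref{coalgebra cosmash} for the right counital coalgebra $L\rtimes H$, restricted to $L\underline{\#}H$. The canonical representative of $x\underline{\#}h$ inside $L\otimes H$ is $x(h_1\rightharpoonup 1_L)\otimes h_2$, so that $\varepsilon=\varepsilon_L\otimes\varepsilon_H$ evaluated on it yields $\varepsilon_L(x)\varepsilon_L(h_1\rightharpoonup 1_L)\varepsilon_H(h_2)=\varepsilon_L(x)\varepsilon_L(h\rightharpoonup 1_L)$, which is the stated counit.

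The first substantive point, and the one I expect to be the main obstacle, is that $\Delta$ restricts to a map $L\underline{\#}H\to(L\underline{\#}H)\otimes(L\underline{\#}H)$ given by the displayed formula. This is exactly what Lemma \ref{igualdade importante}(ii) delivers: its left-hand side is $\Delta$ (in the sense of Remark \ref{coalgebra cosmash}) applied to the canonical representative $x(h_1\rightharpoonup 1_L)\otimes h_2$, while its right-hand side, once the coaction factors are regrouped by the partial comodule-coalgebra axiom Definition \ref{part_comod_coalg}(ii), is precisely $x_1\underline{\#}{h_1}^0\otimes x_2{h_1}^1\underline{\#}h_2$ with both legs already written as canonical (hence projected) elements of $L\underline{\#}H$. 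Thus the same computation simultaneously shows that $\Delta(L\underline{\#}H)\subseteq(L\underline{\#}H)^{\otimes 2}$ and produces the stated comultiplication. The delicate part here is the index bookkeeping needed to pass from the four-fold tensor in Definition \ref{part_comod_coalg}(ii) to the projected legs; this is the only place where the ``canonical representative'' description and the ``projected-legs'' formula must be reconciled, so I would isolate it carefully.

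Granting that $\Delta$ restricts to $L\underline{\#}H$, coassociativity and the right counit property come for free from the ambient space. Remark \ref{coalgebra cosmash} already asserts that $L\rtimes H$ is a (coassociative) right counital coalgebra, i.e. $(\Delta\otimes\mathrm{id})\Delta=(\mathrm{id}\otimes\Delta)\Delta$ and $(\mathrm{id}\otimes\varepsilon)\Delta=\mathrm{id}$ on all of $L\otimes H$, the latter resting on Definition \ref{part_comod_coalg}(i); restricting these identities to the invariant subspace $L\underline{\#}H$ gives the same identities there, since $\Delta$ maps it into its own tensor square and $\varepsilon$ agrees on it with the proposition's counit. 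Multiplicativity of $\Delta$ is inherited from Proposition \ref{delta_multi} in the same spirit: the product of $L\underline{\#}H$ is the restriction of that of $L\#H$, so the identity $\Delta(uv)=\Delta(u)\Delta(v)$ proved there for $u,v\in L\#H$ specializes to $u,v\in L\underline{\#}H$. I would also remark that $\Delta$ is not expected to be unital, which is why only multiplicativity is claimed.

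Finally, I would check that $\varepsilon$ is a morphism of algebras by a short direct computation. Unitality is immediate, $\varepsilon(1_L\underline{\#}1_H)=\varepsilon_L(1_L)\varepsilon_L(1_H\rightharpoonup 1_L)=1_{\Bbbk}$ using $1_H\rightharpoonup 1_L=1_L$ from Definition \ref{part_mod_alg}(i). For products I expand $\varepsilon\big((x\underline{\#}h)(y\underline{\#}g)\big)=\varepsilon_L(x)\,\varepsilon_L(h_1\rightharpoonup y)\,\varepsilon_L(h_2g\rightharpoonup 1_L)$, apply Definition \ref{partial matched pair}(iv) to split $\varepsilon_L(h_1\rightharpoonup y)=\varepsilon_L(h_1\rightharpoonup 1_L)\varepsilon_L(y)$, and then use that $\varepsilon_L(h_1\rightharpoonup 1_L)\varepsilon_L(h_2g\rightharpoonup 1_L)=\varepsilon_L\big(h\rightharpoonup(g\rightharpoonup 1_L)\big)=\varepsilon_L(h\rightharpoonup 1_L)\varepsilon_L(g\rightharpoonup 1_L)$, where the first equality is the partial-action axiom Definition \ref{part_mod_alg}(iii) evaluated at $1_L$ (together with multiplicativity of $\varepsilon_L$) and the second is again Definition \ref{partial matched pair}(iv). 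The result is $\varepsilon(x\underline{\#}h)\varepsilon(y\underline{\#}g)$, which finishes the proof.
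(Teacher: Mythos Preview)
Your proof is correct and matches the paper's approach almost step for step: the algebra structure comes from Lemma \ref{algebra_smash}, the restriction of $\Delta$ to $L\underline{\#}H$ is obtained via Lemma \ref{igualdade importante}(ii) together with Definition \ref{part_comod_coalg}(ii), multiplicativity of $\Delta$ is inherited from Proposition \ref{delta_multi}, and the verification that $\varepsilon$ is an algebra morphism is the same direct computation using Definition \ref{partial matched pair}(iv) and Definition \ref{part_mod_alg}(iii). The only divergence is in the right counit identity $(\mathrm{id}\otimes\varepsilon)\Delta=\mathrm{id}$: you simply inherit it from the ambient right counital coalgebra $L\rtimes H$ of Remark \ref{coalgebra cosmash} (where it follows from Definition \ref{part_comod_coalg}(i)), whereas the paper recomputes it directly on $L\underline{\#}H$ using the quasi-abelian condition and Lemma \ref{igualdade importante}(i). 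Your shortcut is legitimate and a bit cleaner, since the quasi-abelian hypothesis has already done its work inside Lemma \ref{igualdade importante}(ii) and need not be invoked a second time.
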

\begin{proof} Note that by Lemma \ref{algebra_smash}, we have that $L\underline{\#} H$ is an algebra with unit $1_L\underline{\#}1_H$. Now, using Remark \ref{coalgebra cosmash},
\begin{eqnarray*}
\Delta(x\underline{\#}h) &=& \Delta(x(h_1 \rightharpoonup 1_L)\# h_2)\\
&=& x_1(h_1 \rightharpoonup 1_L)_1\# {h_2}^0\otimes x_2(h_1 \rightharpoonup 1_L)_2{h_2}^1\# h_3\\
&\stackrel{\ref{igualdade importante}}{=}& x_1({h_1}^0 \rightharpoonup 1_L)\# {h_2}^0 \otimes x_2{h_1}^1{h_2}^1(h_3 \rightharpoonup 1_L)\# h_4\\
&=& x_1({{h_1}^0}_1 \rightharpoonup 1_L)\# {{h_1}^0}_2\otimes x_2 {h_1}^1(h_2 \rightharpoonup 1_L)\# h_3\\
&=& x_1 \underline{\#} {h_1}^0 \otimes x_2 {h_1}^1\underline{\#} h_2 \in (L\underline{\#}H)\otimes (L\underline{\#}H),
\end{eqnarray*}
for all $h\in H$ and $x\in L$.
Furthermore,
\begin{align*}
\varepsilon(x\underline{\#}h) &= \varepsilon(x(h_1 \rightharpoonup 1_L)\# h_2) \\
& =
\varepsilon_L(x(h_1 \rightharpoonup 1_L)) \varepsilon_H(h_2) \\
& = \varepsilon_L(x) \varepsilon_L(h_1 \varepsilon_H(h_2) \rightharpoonup 1_L)\\
& =
\varepsilon_L(x) \varepsilon_L(h \rightharpoonup 1_L),
\end{align*} and so
\begin{eqnarray*}
( id_{L \underline{\#} H}  \otimes \varepsilon )(\Delta (x \underline{\#} h)) & = &( id_{L \underline{\#} H}  \otimes \varepsilon )( x_1 \underline{\#} {h_1}^0 \otimes x_2 {h_1}^1\underline{\#} h_2) \\
& =&  x_1 \underline{\#} {h_1}^0 \varepsilon(x_2 {h_1}^1\underline{\#} h_2) \\
&=& x_1\underline{\#} {h_1}^0 \varepsilon_L(x_2)\varepsilon_L({h_1}^1)\varepsilon_L(h_2 \rightharpoonup 1_L)\\
&=& x\underline{\#} {h_1}^0 \varepsilon_L({h_1}^1)\varepsilon_L(h_2\rightharpoonup 1_L)\\
&=& x\underline{\#} {h_1}^0 \varepsilon_L({h_1}^1(h_2\rightharpoonup 1_L))\\
&\stackrel{3.5}{=}& x\underline{\#} {h_2}^0 \varepsilon_L((h_1\rightharpoonup 1_L){h_2}^1)\\
&=& x\underline{\#}{h_2}^0 \varepsilon_L(h_1\rightharpoonup 1_L)\varepsilon_L({h_2}^1)\\
&\stackrel{\ref{part_comod_coalg}(i)}{=}& x\underline{\#} h_2\varepsilon_L(h_1\rightharpoonup 1_L)\\
&=& x(h_2\rightharpoonup 1_L)\# h_3\varepsilon_L(h_1\rightharpoonup 1_L)\\
&\stackrel{\ref{igualdade importante}(i)}{=}& x(h_1\rightharpoonup 1_L)\# h_2\\
&=& x\underline{\#} h.
\end{eqnarray*}

Therefore, $L\underline{\#} H$ is a right counital coalgebra.

\vu

By Proposition \ref{delta_multi} $\Delta$ is multiplicative in the whole space $ L \# H$, in particular also multiplicative in the subspace $L \underline{\#} H$. To conclude, it is enough to check that $\varepsilon$ is a morphism of algebras. Indeed, for $(x\underline{\#}h)$, $(y\underline{\#}g)\in L\underline{\#} H$,
\begin{eqnarray*}
\varepsilon((x\underline{\#}h)(y\underline{\#}g))&=& \varepsilon(x(h_1 \rightharpoonup y)\underline{\#} h_2g)\\
&=& \varepsilon_L(x)\varepsilon_L(h_1 \rightharpoonup y)\varepsilon_L(h_2g\rightharpoonup 1_L)\\
&\stackrel{\ref{partial matched pair}}{=}& \varepsilon_L(x)\varepsilon_L(h_1\rightharpoonup 1_L)\varepsilon_L(y)\varepsilon_L(h_2g\rightharpoonup 1_L)\\
&=& \varepsilon_L(x)\varepsilon_L(y)\varepsilon_L((h_1 \rightharpoonup 1_L)(h_2g \rightharpoonup 1_L))\\
&=& \varepsilon_L(x)\varepsilon_L(y)\varepsilon_L(h\rightharpoonup (g\rightharpoonup 1_L))\\
&\stackrel{\ref{partial matched pair}}{=}& \varepsilon_L(x)\varepsilon_L(y)\varepsilon_L(h \rightharpoonup 1_L)\varepsilon_L(g\rightharpoonup 1_L)\\
&=& \varepsilon(x\underline{\#} h)\varepsilon(y\underline{\#}g)
\end{eqnarray*}
and, $\varepsilon(1_L\underline{\#} 1_H)=\varepsilon(1_L(1_H\rightharpoonup 1_L)\# 1_H)=\varepsilon_L(1_L)\varepsilon_L(1_H\rightharpoonup 1_L)\varepsilon_H(1_H)= 1_{\Bbbk}. $
\end{proof}

 Note that $L\underline{\#} H$ is not necessarily a counital coalgebra for, in general,
\begin{eqnarray*}
(\varepsilon\otimes id_{L \underline{\#} H})\Delta(x\underline{\#}h)&=& (\varepsilon \otimes id_{L \underline{\#} H})(x_1 \underline{\#} {h_1}^0 \otimes x_2{h_1}^1\underline{\#} h_2)\\
&=& \varepsilon(x_1 \underline{\#} {h_1}^0) x_2{h_1}^1\underline{\#} h_2\\
&=& \varepsilon_L(x_1)\varepsilon_L({h_1}^0\rightharpoonup 1_L) x_2{h_1}^1 (h_2\rightharpoonup 1_L)\# h_3\\
&=&\varepsilon_L({h_1}^0\rightharpoonup 1_L) x{h_1}^1 (h_2\rightharpoonup 1_L)\# h_3\\
&\neq& x\underline{\#}h.
\end{eqnarray*}
The next example illustrates this fact.

\begin{exa}
Let $L$ be the left partial $H$-module algebra given in Example \ref{mod_acao_corpo} and let $H$ be the right partial $L$-comodule coalgebra of Example \ref{prop cc}, where $z$ is a central element in $L$ and $\lambda(h_1)h_2 = h_1\lambda(h_2)$, for all $h\in H$. By Lemma \ref{exemplo}, $(H,L)$ is a quasi-abelian partial matched pair. However, $$(\varepsilon\otimes id_{L \underline{\#} H})\Delta(x\underline{\#}h)= \lambda(h_1)xz\# h_2\quad\text{and}\quad x\underline{\#}h=\lambda(h_1)x\#h_2,$$ i.e, $(\varepsilon\otimes id_{L \underline{\#} H})\Delta(x\underline{\#}h)\neq x\underline{\#}h.$
\end{exa}
Therefore, we will consider the subspace
$$\overline{L\underline{\#}H}= (\varepsilon\otimes id_{L \underline{\#} H})\Delta(L\underline{\#}H),$$
 where each element is written as $$\overline{x\underline{\#}h}=\varepsilon_L({h_1}^0\rightharpoonup 1_L) x{h_1}^1\underline{\#}h_2,$$ with $h\in H$ and $x\in L$.

\vu

In order to verify that this subspace is a bialgebra we will need the next result.
\begin{lem}\label{importante}
Let $(H,L)$ be a quasi-abelian partial matched pair. Then
$$\varepsilon_H({h_1}^0){h_1}^1(h_2\rightharpoonup 1_L)=\varepsilon_L({h_1}^0\rightharpoonup 1_L){h_1}^1(h_2\rightharpoonup 1_L),$$
for all $h\in H$.
\end{lem}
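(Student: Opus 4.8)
The plan is to obtain the identity by contracting the four–legged relation of Lemma~\ref{igualdade importante}(ii) down to two tensor slots and then invoking the quasi-abelian condition. It helps to read the two sides as convolutions over the map $h\mapsto h\rightharpoonup 1_L$: the left-hand side is the value at $h$ of $h\mapsto\varepsilon_H(h^0)h^1$ so convolved, and the right-hand side is the value of $h\mapsto\varepsilon_L(h^0\rightharpoonup 1_L)h^1$ convolved with the same map. So the real content is that these two ``contracted coactions'' agree after this one convolution.

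First I would record the auxiliary identity
$$\varepsilon_L({h_1}^0\rightharpoonup 1_L)\,{h_1}^1\,\varepsilon_H({h_2}^0)\,{h_2}^1=\varepsilon_L(h^0\rightharpoonup 1_L)\,h^1,$$
valid for all $h\in H$. This follows at once from Definition~\ref{part_comod_coalg}(ii): rewriting ${h_1}^0\otimes{h_2}^0\otimes{h_1}^1{h_2}^1=(h^0)_1\otimes(h^0)_2\otimes h^1$ and applying $\varepsilon_L(\,\cdot\rightharpoonup 1_L)$ to the first leg and $\varepsilon_H$ to the second, the counit of $H$ collapses $(h^0)_1\varepsilon_H((h^0)_2)=h^0$ while the scalars pass freely through the product in $L$.

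Next I would feed Lemma~\ref{igualdade importante}(ii) with $x=1_L$, apply $\varepsilon_L$ to its first tensor slot and $\varepsilon_H$ to its last slot, and use the counits of $L$ and $H$ to absorb the corresponding Sweedler indices. This yields
$${h_2}^0\otimes(h_1\rightharpoonup 1_L)\,{h_2}^1=\varepsilon_L({h_1}^0\rightharpoonup 1_L)\,{h_2}^0\otimes{h_1}^1{h_2}^1(h_3\rightharpoonup 1_L).$$
Rewriting the left-hand side by the quasi-abelian condition of Definition~\ref{defabeliano} (taken with $x=1_L$) replaces ${h_2}^0\otimes(h_1\rightharpoonup 1_L){h_2}^1$ by ${h_1}^0\otimes{h_1}^1(h_2\rightharpoonup 1_L)$. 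Finally I would apply $\varepsilon_H$ to the remaining first slot: the left-hand side becomes exactly $\varepsilon_H({h_1}^0){h_1}^1(h_2\rightharpoonup 1_L)$, while the right-hand side becomes $\varepsilon_L({h_1}^0\rightharpoonup 1_L){h_1}^1\,\varepsilon_H({h_2}^0){h_2}^1\,(h_3\rightharpoonup 1_L)$, and the auxiliary identity above collapses the middle factor to produce $\varepsilon_L({h_1}^0\rightharpoonup 1_L){h_1}^1(h_2\rightharpoonup 1_L)$, as desired.

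The step I expect to be the main obstacle is purely organizational: selecting which two legs of Lemma~\ref{igualdade importante}(ii) to hit with $\varepsilon_L$ and $\varepsilon_H$ so that, after the quasi-abelian rewriting, the single remaining application of $\varepsilon_H$ lands on precisely the coaction leg ${h_1}^0$ one wants, and then invoking the auxiliary relation with its coproduct indices correctly aligned (by coassociativity) against the trailing factor $(h_3\rightharpoonup 1_L)$. Note that the quasi-abelian step is genuinely needed here, since without it the same contraction only produces $(h_1\rightharpoonup 1_L)\varepsilon_H({h_2}^0){h_2}^1$ on the left, which is the ``wrong order'' convolution. Once the bookkeeping of the contractions is set up, every remaining manipulation is a routine use of the counit axioms and coassociativity.
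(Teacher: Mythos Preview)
Your proof is correct and follows essentially the same route as the paper. Both arguments contract Lemma~\ref{igualdade importante}(ii) (with $x=1_L$) down to a single $L$-valued expression, then use the quasi-abelian condition to bring the left-hand side into the desired form. The only organizational difference is that the paper routes the right-hand side through the intermediate expansion $x_1\underline{\#}{h_1}^0\otimes x_2{h_1}^1\underline{\#}h_2 = x_1(({h_1}^0)_1\rightharpoonup 1_L)\#({h_1}^0)_2\otimes x_2{h_1}^1(h_2\rightharpoonup 1_L)\#h_3$ from the proof of Proposition~\ref{barra embaixo coalgebra}, which already absorbs the comodule-coalgebra identity of Definition~\ref{part_comod_coalg}(ii); you instead work directly with the right-hand side of Lemma~\ref{igualdade importante}(ii) and invoke that comodule identity explicitly as your ``auxiliary identity'' at the end. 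The two derivations unwind to the same computation.
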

\begin{proof}
Indeed, in the proof of Proposition \ref{barra embaixo coalgebra}, and thanks to Lemma \ref{igualdade importante},  we saw, in particular, that
$$x_1(h_1\rightharpoonup 1_L)_1\# {h_2}^0 \otimes x_2(h_1 \rightharpoonup 1_L)_2 {h_2}^1\# h_3=x_1 \underline{\#} {h_1}^0 \otimes x_2{h_1}^1 \underline{\#} h_2,$$
for all $x\in L$ and $h\in H$.

But, on the other side, we also have
$$x_1 \underline{\#} {h_1}^0 \otimes x_2 {h_1}^1\underline{\#} h_2  = x_1 (({h_1}^0)_1 \rightharpoonup 1_L) \# ({h_1}^0)_2 \otimes x_2 {h_1}^1 (h_2 \rightharpoonup 1_L) \# h_3.$$
Hence,
\begin{align*}
& x_1 (h_1 \rightharpoonup 1_L)_1\# {h_2}^0\otimes x_2(h_1 \rightharpoonup 1_L)_2{h_2}^1\# h_3 \\
= \, \, & x_1 ({(h_1}^0)_1 \rightharpoonup 1_L) \# {(h_1}^0)_2 \otimes x_2 {h_1}^1 (h_2 \rightharpoonup 1_L) \# h_3.
\end{align*}

And applying $\varepsilon_{L \# H} \otimes id_{L \# H}$
on this resultant equation and considering $x=1_L$, we get
$$\varepsilon_H({h_2}^0)(h_1\rightharpoonup 1_L){h_2}^1\# h_3 = \varepsilon_L({h_1}^0 \rightharpoonup 1_L){h_1}^1(h_2 \rightharpoonup 1_L)\# h_3.$$
Finally, applying  $id_L \otimes \varepsilon_H$ on  this last  equation (remember that $L \# H = L \otimes H$ as vector space) and using the equation from Definition \ref{defabeliano} we obtain
$$\varepsilon_H({h_1}^0){h_1}^1(h_2\rightharpoonup 1_L)=\varepsilon_L({h_1}^0\rightharpoonup 1_L){h_1}^1(h_2\rightharpoonup 1_L).$$
\end{proof}

Thanks to Lemma \ref{importante} we can rewrite the elements of $\overline{L\underline{\#}H}$ as follows
\begin{equation}
\overline{x\underline{\#}h}=\varepsilon_H({h_1}^0)x{h_1}^1\underline{\#}h_2.\label{novaescrita}
\end{equation}

\begin{thm}\label{principal}
Let $(H,L)$ be a quasi-abelian partial matched pair. Then $\overline{L\underline{\#} H}$ is a bialgebra with the following structure:
\begin{enumerate}
\item[(i)] Multiplication: $(\overline{x\underline{\#} h})(\overline{y \underline{\#} g})= \overline{x(h_1\rightharpoonup y)\underline{\#} h_2 g}$;
\item[(ii)] Unit: $\overline{1_L \underline{\#} 1_H}$;
\item[(iii)] Comultiplication: $\Delta(\overline{x\underline{\#}h})=\overline{x_1 \underline{\#} {h_1}^0} \otimes \overline{x_2 {h_1}^1 \underline{\#} h_2}$;
\item[(iv)] Counit: $\varepsilon(\overline{x\underline{\#} h})= \varepsilon_L(x)\varepsilon_L(h\rightharpoonup 1_L)$,
\end{enumerate}
for all $x, y\in L$ and $h,g\in H$.
\end{thm}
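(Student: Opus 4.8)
The strategy is to realize $\overline{L\underline{\#}H}$ as the image of the idempotent projection $P=(\varepsilon\otimes id)\Delta\colon L\underline{\#}H\to L\underline{\#}H$ attached to the right counital coalgebra of Proposition \ref{barra embaixo coalgebra}, exactly in the spirit of the passage from $H\rtimes C$ to $H\overline{\rtimes}C$ in Proposition \ref{coalgebra cosmash2}. By definition $\overline{L\underline{\#}H}=P(L\underline{\#}H)$, and the explicit description \eqref{novaescrita} of its elements is just the componentwise form of $P$. The whole proof then consists of transporting the algebra structure and the \emph{right} counital coalgebra structure of $L\underline{\#}H$ along $P$, the only genuinely new point being that restricting to the image repairs the missing \emph{left} counit axiom.

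First I would record two formal properties of $P$. Multiplicativity: since $\varepsilon$ is an algebra map and $\Delta$ is multiplicative (Proposition \ref{barra embaixo coalgebra}, itself resting on Proposition \ref{delta_multi}), the map $\varepsilon\otimes id$ is multiplicative and hence so is $P=(\varepsilon\otimes id)\Delta$. Idempotency: writing $\Delta a=a_{(1)}\otimes a_{(2)}$, coassociativity of $\Delta$ yields the projector identity $\Delta P=(P\otimes id)\Delta$, and since the right counit $(id\otimes\varepsilon)\Delta=id$ gives $\varepsilon P=\varepsilon$, one computes $P^2=(\varepsilon\otimes id)\Delta P=((\varepsilon P)\otimes id)\Delta=P$. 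Thus $\overline{L\underline{\#}H}$ is exactly the fixed-point set $\{a:P(a)=a\}$. Because $P$ is multiplicative and idempotent, the image is closed under the product of $L\underline{\#}H$ and is a unital algebra with unit $P(1_L\underline{\#}1_H)=\overline{1_L\underline{\#}1_H}$ (note that $P$ need not preserve the ambient unit). Multiplicativity of $P$ together with the product of Lemma \ref{algebra_smash} then gives
\[
(\overline{x\underline{\#}h})(\overline{y\underline{\#}g})=P\big((x\underline{\#}h)(y\underline{\#}g)\big)=\overline{x(h_1\rightharpoonup y)\underline{\#}h_2g},
\]
which is (i), while (ii) is the unit just identified.

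For the coalgebra structure I would set $\overline{\Delta}=(P\otimes P)\Delta$; applied to $x\underline{\#}h$ and using the comultiplication of Proposition \ref{barra embaixo coalgebra}, this is precisely formula (iii), with both legs manifestly in $\overline{L\underline{\#}H}$. Coassociativity of $\overline{\Delta}$ on the image follows from coassociativity of $\Delta$ after applying $id\otimes P\otimes P$ to the coassociativity identity for $\Delta$; the projector identity $\Delta P=(P\otimes id)\Delta$ makes the first leg of $\Delta a$ already $P$-invariant, so no correction is lost. The decisive step is the counit: using $\varepsilon P=\varepsilon$ one gets, for every $a\in\overline{L\underline{\#}H}$, that $(\varepsilon\otimes id)\overline{\Delta}(a)=P((\varepsilon\otimes id)\Delta a)=P(P(a))=a$ and $(id\otimes\varepsilon)\overline{\Delta}(a)=P((id\otimes\varepsilon)\Delta a)=P(a)=a$, where the second identity uses the right counit of Proposition \ref{barra embaixo coalgebra} and the first uses $P(a)=a$ on the image. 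Hence $\varepsilon$ from (iv) is now a genuine two-sided counit.

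Finally, for the bialgebra compatibilities, $\overline{\Delta}=(P\otimes P)\Delta$ is multiplicative because $P$ and $\Delta$ are, and $\varepsilon$ is an algebra map by Proposition \ref{barra embaixo coalgebra}; together with the unit and counit normalizations this yields all the bialgebra axioms. The main obstacle I anticipate is not any single identity but the bookkeeping forced by the one-sidedness of Proposition \ref{barra embaixo coalgebra}: one cannot simply invoke ``subcoalgebra'' and must instead verify that the projected comultiplication $\overline{\Delta}$ is well defined, coassociative, and two-sided counital, carefully tracking $P$ through coassociativity and using Lemma \ref{importante} and \eqref{novaescrita} to keep the elements of $\overline{L\underline{\#}H}$ in their reduced form.
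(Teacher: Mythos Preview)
Your projector approach via $P=(\varepsilon\otimes id)\Delta$ is genuinely different from the paper's and more conceptual. The paper verifies formula (i) by a direct Sweedler-notation computation showing that $(\overline{x\underline{\#}h})(\overline{y\underline{\#}g})$ and $\overline{x(h_1\rightharpoonup y)\underline{\#}h_2g}$ coincide elementwise, whereas you obtain this in one line from the multiplicativity of $P$. Your treatment of the coalgebra structure is essentially an abstract re-proof of Proposition~\ref{coalgebra cosmash2} in this setting, and the identities $\Delta P=(P\otimes id)\Delta$ and $\varepsilon P=\varepsilon$ make the repair of the left counit axiom transparent. What this buys is a shorter argument that avoids tracking partial-action identities through long strings of Sweedler symbols; what the paper's explicit computation buys is that it makes visible exactly where Remark~\ref{troca}, Definition~\ref{defabeliano}, and Definition~\ref{partial matched pair}(iv) enter.

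There is, however, one real gap. You assert that multiplicativity of $\overline{\Delta}$ and of $\varepsilon$ ``together with the unit and counit normalizations'' yields all bialgebra axioms, but the axiom $\overline{\Delta}(\overline{1_L\underline{\#}1_H})=\overline{1_L\underline{\#}1_H}\otimes\overline{1_L\underline{\#}1_H}$ is \emph{not} a formal consequence of what you have established. Multiplicativity gives only $\overline{\Delta}(ab)=\overline{\Delta}(a)\overline{\Delta}(b)$, and you cannot borrow unitality from the ambient $\Delta$ because $\Delta$ on $L\underline{\#}H$ does \emph{not} preserve the unit: by Definition~\ref{partial matched pair}(v) one has $\Delta(1_L\underline{\#}1_H)=1_L\underline{\#}1_H\otimes\varepsilon_H({1_H}^0){1_H}^1\underline{\#}1_H$, which differs from $1\otimes1$ whenever the coaction is genuinely partial. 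The paper closes this by an explicit computation using Definition~\ref{partial matched pair}(v) together with Definition~\ref{part_comod_coalg}(iii). In your framework the missing ingredient is that $w:=\varepsilon_H({1_H}^0){1_H}^1$ is idempotent in $L$; this follows by applying $\varepsilon_L\otimes id_L$ to the relation $w\otimes w=\Delta_L(w)(1_L\otimes w)$ (obtained from Definition~\ref{part_comod_coalg}(iii) at $c=1_H$) and using $\varepsilon_L(w)=1$. Once you insert that step, $(P\otimes P)\Delta(1_L\underline{\#}1_H)=P(1_L\underline{\#}1_H)\otimes P(1_L\underline{\#}1_H)$ and your argument is complete.
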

\begin{proof} Let $h,g\in H$ and $x,y\in L$. Then
\begin{eqnarray*}
&&\hspace{-0.65cm} (\overline{x \underline{\#} h})(\overline{y \underline{\#} g})\\
&\stackrel{(\ref{novaescrita})}{=}& (\varepsilon_L({h_1}^0\rightharpoonup 1_L)x{h_1}^1(h_2\rightharpoonup 1_L)\# h_3)(\varepsilon_H({g_1}^0)y{g_1}^1(g_2\rightharpoonup 1_L)\# g_3)\\
&=& \varepsilon_L({h_1}^0\rightharpoonup 1_L)\varepsilon_H({g_1}^0)x{h_1}^1(h_2\rightharpoonup 1_L)(h_3\rightharpoonup y{g_1}^1)(h_4\rightharpoonup (g_2\rightharpoonup 1_L))\# h_5 g_3\\
&=& \varepsilon_L({h_1}^0\rightharpoonup 1_L)\varepsilon_H({g_1}^0)x{h_1}^1(h_2\rightharpoonup y{g_1}^1)(h_3\rightharpoonup 1_L)(h_4g_2\rightharpoonup 1_L)\# h_5 g_3\\
&=& \varepsilon_L({h_1}^0\rightharpoonup 1_L)\varepsilon_H({g_1}^0)x{h_1}^1(h_2\rightharpoonup y{g_1}^1)(h_3 g_2\rightharpoonup 1_L)\# h_4 g_3.
\end{eqnarray*}

On the other hand,
\begin{eqnarray*}
&& \hspace{-0.7cm}\overline{x(h_1\rightharpoonup y)\underline{\#} h_2 g}{=}\\
&\stackrel{(\ref{novaescrita})}{=}&\hspace{-0.3cm} \varepsilon_H((h_2 g_1)^0)x(h_1\rightharpoonup y)(h_2 g_1)^1 (h_3g_2 \rightharpoonup 1_L)\# h_4g_3\\
&=&\hspace{-0.3cm} \varepsilon_H((h_2 g_1)^0)x\varepsilon_L((h_1\rightharpoonup y)_1)(h_1\rightharpoonup y)_2(h_2 g_1)^1(h_3 g_2\rightharpoonup 1_L)\# h_4g_3\\
&\stackrel{\ref{troca}}{=}& \hspace{-0.3cm} \varepsilon_H({h_3}^0{g_1}^0)x\varepsilon_L({h_1}^0\rightharpoonup y_1){h_1}^1(h_2\rightharpoonup y_2){h_3}^1(h_4\rightharpoonup {g_1}^1)(h_5 g_2\rightharpoonup 1_L)\# h_6 g_3\\
&\stackrel{3.5}{=}&\hspace{-0.3cm} \varepsilon_H({h_2}^0)\varepsilon_H({g_1}^0)x\varepsilon_L({h_1}^0\rightharpoonup y_1){h_1}^1{h_2}^1(h_3\rightharpoonup y_2)(h_4\rightharpoonup {g_1}^1)(h_5g_2\rightharpoonup 1_L)\# h_6 g_3\\
&\stackrel{\ref{partial matched pair}}{=}&\hspace{-0.3cm} \varepsilon_L({{h_1}^1}_2)\varepsilon_H({g_1}^0)x\varepsilon_L({{h_1}^0}_1\rightharpoonup 1_L)\varepsilon_L(y_1){h_1}^1(h_2\rightharpoonup y_2{g_1}^1)(h_3 g_2\rightharpoonup 1_L) \# h_4 g_3\\
&=&\hspace{-0.3cm} \varepsilon_H({g_1}^0)\varepsilon_L({h_1}^0\rightharpoonup 1_L)x{h_1}^1(h_2\rightharpoonup y{g_1}^1)(h_3 g_2\rightharpoonup 1_L)\# h_4 g_3.
\end{eqnarray*}
Thus, by Lemma \ref{algebra_smash} $\overline{L\underline{\#} H}$ is a unital algebra and by Proposition \ref{coalgebra cosmash2} also a counital coalgebra.

By Proposition \ref{barra embaixo coalgebra}, we already know that the maps $\Delta$ and $\varepsilon$ are both multiplicative and $\varepsilon(\overline{1_L \underline{\#} 1_H})=1_{\Bbbk}.$ Then, it remains  to check that $\Delta(\overline{1_L \underline{\#} 1_H})=\overline{1_L\underline{\#} 1_H} \otimes \overline{1_L\underline{\#} 1_H}$. Indeed,
\begin{eqnarray*}
\Delta(\overline{1_L \underline{\#} 1_H})&=& \varepsilon_L({1_H}^0\rightharpoonup 1_L) {{1_H}^1}_1\# {1_H}^0 \otimes {{1_H}^1}_2 {1_H}^1\# 1_H\\
&\stackrel{\ref{partial matched pair}}{=}& \varepsilon_L({1_H}^0 \rightharpoonup 1_L) {{1_H}^1}_1 \# \varepsilon_H({1_H}^0) 1_H\otimes {{1_H}^1}_2 {1_H}^1 \# 1_H\\
&\stackrel{\ref{part_comod_coalg}}{=}& \varepsilon_L({1_H}^{00}\rightharpoonup 1_L) {1_H}^{01}\# 1_H \otimes {1_H}^1\# 1_H\\
&\stackrel{\ref{partial matched pair}}{=}& \varepsilon_L({1_H}^0\rightharpoonup 1_L) \varepsilon_H({1_H}^0) {1_H}^1 \# 1_H \otimes {1_H}^1 \# 1_H\\
&=& \overline{1_L \underline{\#} 1_H} \otimes \overline{1_L \underline{\#} 1_H}.\\
\end{eqnarray*}

The proof is complete.
\end{proof}

\begin{rem} Take now a quasi-abelian partial matched pair $(H,L)$ and consider the subspace $L\overline{\#} H=(\varepsilon\otimes id_{L\otimes H})\Delta (L\# H),$ whose elements are as follows $x\overline{\#} h = \varepsilon_H({h_1}^0)x{h_1}^1\# h_2$. Then  by Proposition \ref{coalgebra cosmash2} the subspace $L\overline{\#} H$ is a coalgebra with counit and it is easy to check that it is also an algebra with left unit.
	
	In this way,  the subspace $\underline{L\overline{\#}H}=(L\overline{\#}H)(1_L\overline{\#}1_H),$ is a bialgebra with product $\underline{(x\overline{\#}h)}\underline{(y\overline{\#} g)}=\underline{x(h_1\rightharpoonup y)\overline{\#}h_2g}$, coproduct $\Delta(\underline{x\overline{\#}h})=\underline{x_1\overline{\#} {h_1}^0} \otimes \underline{x_2{h_1}^1\overline{\#} h_2}$, unit $\underline{1_L\overline{\#}1_H}$ and counit $\varepsilon(\underline{x\overline{\#}h})= \varepsilon_L(h\rightharpoonup 1_L)\varepsilon_L(x)$.

\vu
	
	Furthermore,
	\begin{eqnarray*}
		\underline{x\overline{\#}h}&=& \varepsilon_H({h_2}^0)x(h_1\rightharpoonup 1_L){h_2}^1\# h_3\\
		&\stackrel{3.5}{=}& \varepsilon_H({h_1}^0) x {h_1}^1 (h_2\rightharpoonup 1_L) \# h_3\\
		&=& \overline{x \underline{\#} h},
	\end{eqnarray*}
	for all $x\in L$ and $h\in H$. Therefore, the bialgebras $\underline{L\overline{\#}H}$ and $\overline{L\underline{\#}H}$ are the same, which justify to denote it simply $L\underline{\overline{\#}}H.$ From now on we will use only this last notation for such a bialgebra.
\end{rem}

The next result will be presented in order to give us the necessary conditions for $L\underline{\overline{\#}} H$ to be a Hopf algebra.

\begin{lem}\label{condicoes}
	Let $(H,L)$ be a quasi-abelian partial matched pair. If
	\begin{equation} \label{I'}
	(h^0\rightharpoonup 1_L)\otimes S_L(h^1)=\varepsilon_H({1_H}^0)\varepsilon_L(h\rightharpoonup 1_L)1_L \otimes {1_H}^1
	\end{equation}
	and
	\begin{equation}\label{II'}
	\varepsilon_H(h^0)(S_H(g)\rightharpoonup h^1)= \varepsilon_L(g\rightharpoonup 1_L)\varepsilon_H(h)\varepsilon_H({1_H}^0){1_H}^1
	\end{equation}
	hold for all $h,g\in H$, then
	\begin{enumerate}
		\item[(i)] $\varepsilon_L(h\rightharpoonup 1_L)=\varepsilon_L(S_H(h)\rightharpoonup 1_L)$;
		\item[(ii)] $h\rightharpoonup 1_L=\varepsilon_L(h\rightharpoonup 1_L)1_L$.
	\end{enumerate}
\end{lem}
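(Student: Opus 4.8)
Both identities (i) and (ii) are scalar/elementwise consequences obtained by contracting the two hypotheses \eqref{I'} and \eqref{II'} against suitable counit maps. The common preliminary step is to pin down the normalization constant $\kappa:=\varepsilon_H({1_H}^0)\varepsilon_L({1_H}^1)$, which appears on the right-hand side of both hypotheses through the factor $\varepsilon_H({1_H}^0){1_H}^1$. I would evaluate the comodule counit axiom of Definition \ref{part_comod_coalg}(i) (for $H$ as a right partial $L$-comodule coalgebra it reads $h^0\varepsilon_L(h^1)=h$) at $h=1_H$ and then apply $\varepsilon_H$; this gives $\kappa=\varepsilon_H(1_H)=1$, so the awkward $\varepsilon_H({1_H}^0){1_H}^1$ terms collapse. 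Throughout I will abbreviate $\mu(h):=\varepsilon_L(h\rightharpoonup 1_L)$ and use the standard Hopf identity $\varepsilon_L\circ S_L=\varepsilon_L$.

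\textbf{Proof of (ii).} I would apply $id_L\otimes\varepsilon_L$ to \eqref{I'}. On the left, $\varepsilon_L(S_L(h^1))=\varepsilon_L(h^1)$, so the left-hand side becomes $(h^0\rightharpoonup 1_L)\varepsilon_L(h^1)=(h^0\varepsilon_L(h^1))\rightharpoonup 1_L=h\rightharpoonup 1_L$, using linearity of $\rightharpoonup$ in the $H$-slot together with the counit axiom. On the right, applying $\varepsilon_L$ to the $L$-factor ${1_H}^1$ produces the constant $\kappa=1$, leaving $\varepsilon_L(h\rightharpoonup 1_L)1_L$. Equating the two yields $h\rightharpoonup 1_L=\varepsilon_L(h\rightharpoonup 1_L)1_L$, which is exactly (ii).

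\textbf{Proof of (i).} I would apply $\varepsilon_L$ to \eqref{II'}. The key move is Definition \ref{partial matched pair}(iv): since $\varepsilon_L(S_H(g)\rightharpoonup h^1)=\varepsilon_L(S_H(g)\rightharpoonup 1_L)\varepsilon_L(h^1)=\mu(S_H(g))\varepsilon_L(h^1)$, the left-hand side becomes $\mu(S_H(g))\,\varepsilon_H(h^0)\varepsilon_L(h^1)=\mu(S_H(g))\,\varepsilon_H(h)$, again via the counit identity $\varepsilon_H(h^0)\varepsilon_L(h^1)=\varepsilon_H(h)$. The right-hand side becomes $\mu(g)\varepsilon_H(h)\kappa=\mu(g)\varepsilon_H(h)$. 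Hence $\mu(S_H(g))\varepsilon_H(h)=\mu(g)\varepsilon_H(h)$ for all $g,h\in H$; specializing $h=1_H$ gives $\mu(S_H(g))=\mu(g)$, which is (i).

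\textbf{Main obstacle.} The computations are short, so the only real subtlety is recognizing that the repeated factor $\varepsilon_H({1_H}^0){1_H}^1$ must be normalized to $1$ via the comodule counit axiom evaluated at $1_H$; without this observation neither right-hand side simplifies. A secondary point requiring care is the bookkeeping of the Sweedler legs of $\rho(h)=h^0\otimes h^1$, so that $\varepsilon_L$ factors correctly through Definition \ref{partial matched pair}(iv) in part (i). I also note that (i) and (ii) are obtained independently, namely (ii) from \eqref{I'} alone and (i) from \eqref{II'} alone, so neither is needed to establish the other.
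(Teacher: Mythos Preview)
Your proof is correct and follows essentially the same route as the paper. Part (ii) is identical: apply $id_L\otimes\varepsilon_L$ to \eqref{I'} and use $\varepsilon_L\circ S_L=\varepsilon_L$ together with the comodule counit axiom. For part (i) the paper first specializes \eqref{II'} by substituting $h\mapsto h_1$, $g\mapsto h_2$ and then applies $\varepsilon_L$, whereas you apply $\varepsilon_L$ directly to \eqref{II'} and afterwards set $h=1_H$; your variant is marginally cleaner but the underlying mechanism (Definition~\ref{partial matched pair}(iv) plus the identity $\varepsilon_H(h^0)\varepsilon_L(h^1)=\varepsilon_H(h)$) is the same. Your explicit normalization $\kappa=\varepsilon_H({1_H}^0)\varepsilon_L({1_H}^1)=1$ is exactly what the paper uses implicitly in both parts.
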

\begin{proof}
	(i) Note that the equation (\ref{II'}) implies that
$$\varepsilon_H({h_1}^0)(S_H(h_2)\rightharpoonup {h_1}^1)= \varepsilon_L(h\rightharpoonup 1_L) \varepsilon_H({1_H}^0){1_H}^1.$$
	
	Applying $\varepsilon_L$ to both the sides of the such an equality and using the item (iv) of Definition \ref{partial matched pair} we get $\varepsilon_L(S_H(h)\rightharpoonup 1_L)= \varepsilon_L(h\rightharpoonup 1_L).$

\vu
		
	(ii) Applying $id_L\otimes \varepsilon_L$ in (\ref{I'}) one has
	$$(h^0\rightharpoonup 1_L)\varepsilon_L(S_L(h^1))=\varepsilon_H({1_H}^0)\varepsilon_L(h\rightharpoonup 1_L)1_L\varepsilon_L({1_H}^1).$$
	Therefore, $h\rightharpoonup 1_L=\varepsilon_L(h\rightharpoonup 1_L)1_L$.
	\end{proof}

\begin{thm}\label{anti}
	Under the conditions of Lemma \ref{condicoes}, $L\underline{\overline{\#}}H$ is a Hopf algebra with antipode given by
	$$S(x\underline{\overline{\#}}h)=(1_L\underline{\overline{\#}} S_H(h^0))(S_L(h^1)S_L(x)\underline{\overline{\#}}1_H).$$
\end{thm}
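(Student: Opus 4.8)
The plan is to verify that the candidate map $S$ is a two-sided convolution inverse of the identity on the bialgebra $L\underline{\overline{\#}}H$ of Theorem \ref{principal}; since that theorem already supplies the bialgebra structure, establishing
$$m(S\otimes \mathrm{id})\Delta = \eta\circ\varepsilon = m(\mathrm{id}\otimes S)\Delta$$
on $L\underline{\overline{\#}}H$ is exactly what upgrades it to a Hopf algebra. Before doing so I would record that $S$ is well defined on the subspace $\overline{L\underline{\#}H}$: writing $P=(\varepsilon\otimes \mathrm{id})\Delta$ for the idempotent whose image is this subspace, I would check that the formula for $S$ is compatible with $P$, i.e. that applying $S$ to two representatives of the same element $\overline{x\underline{\#}h}$ gives the same value, using the rewriting \eqref{novaescrita} of the elements of $\overline{L\underline{\#}H}$.

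The decisive simplification is Lemma \ref{condicoes}(ii), namely $h\rightharpoonup 1_L=\varepsilon_L(h\rightharpoonup 1_L)1_L$: under the hypotheses \eqref{I'} and \eqref{II'} every term in which the partial action meets $1_L$ collapses to a scalar times $1_L$, which is precisely what makes the partial computation essentially as manageable as Takeuchi's global one. Concretely, to evaluate $m(\mathrm{id}\otimes S)\Delta(\overline{x\underline{\#}h})$ I would first expand the comultiplication of Theorem \ref{principal} into $\overline{x_1\underline{\#}{h_1}^0}\otimes\overline{x_2{h_1}^1\underline{\#}h_2}$, then substitute the antipode formula into the right leg, multiply out using the product $(\overline{a\underline{\#}p})(\overline{b\underline{\#}q})=\overline{a(p_1\rightharpoonup b)\underline{\#}p_2q}$ together with the fact that $S_H$ is an anti-homomorphism, and repeatedly apply Lemma \ref{condicoes}(ii) to linearize the $\rightharpoonup 1_L$ factors.

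After this reduction the remaining expression must be rearranged, by coassociativity together with the comodule-coalgebra axioms of Definition \ref{part_comod_coalg}, the quasi-abelian identity of Definition \ref{defabeliano} and Lemma \ref{importante}, so that the two combinations governed by the hypotheses surface in exactly the required shape: the factor $(h^0\rightharpoonup 1_L)\otimes S_L(h^1)$ to which \eqref{I'} applies, and the factor $\varepsilon_H(h^0)(S_H(g)\rightharpoonup h^1)$ to which \eqref{II'} applies. Substituting \eqref{I'} and \eqref{II'} then collapses everything to
$$\varepsilon_L(x)\varepsilon_L(h\rightharpoonup 1_L)\,\overline{1_L\underline{\#}1_H}=\varepsilon(\overline{x\underline{\#}h})\,\overline{1_L\underline{\#}1_H},$$
which is the right-hand side of the antipode axiom; here Lemma \ref{condicoes}(i) serves to reconcile the counit factor $\varepsilon_L(h\rightharpoonup 1_L)$ with $\varepsilon_L(S_H(h)\rightharpoonup 1_L)$ between the two legs. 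The computation of $m(S\otimes \mathrm{id})\Delta$ is symmetric in spirit and again runs on \eqref{I'}, \eqref{II'} and Lemma \ref{condicoes}.

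I expect the main obstacle to be purely the Sweedler-index bookkeeping: both the partial coaction $\rho$ and the two comultiplications $\Delta_H,\Delta_L$ interact, while the antipode formula mixes the anti-homomorphism $S_H$ with the twisted smash multiplication, so the delicate point is to transport the expanded expression into the precise forms $(h^0\rightharpoonup 1_L)\otimes S_L(h^1)$ and $\varepsilon_H(h^0)(S_H(g)\rightharpoonup h^1)$ demanded by \eqref{I'} and \eqref{II'}. These two hypotheses are evidently engineered to be the partial analogues of the cancellations involving $S_L$ and $S_H$ that drive Takeuchi's global argument, so once the expression is in this normal form the conclusion is immediate.
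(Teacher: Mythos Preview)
Your proposal is correct and follows essentially the same route as the paper: verify directly that $S$ is a two-sided convolution inverse of the identity on the bialgebra of Theorem \ref{principal}, using the comodule-coalgebra axioms, the quasi-abelian identity, equations \eqref{I'}--\eqref{II'}, and Lemma \ref{condicoes}. One small point of calibration: in the paper's computation of $\mathrm{id}\ast S$ only \eqref{I'} is invoked directly (after a use of Definition \ref{part_comod_coalg}(ii) to regroup $\rho(h_1)\rho(h_2)$ as $(\Delta_H\otimes\mathrm{id})\rho(h)$), while \eqref{II'} enters only indirectly, through Lemma \ref{condicoes}(i)--(ii), in the $S\ast\mathrm{id}$ direction; so you may find the two sides less symmetric than you anticipate.
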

\begin{proof} By Theorem \ref{principal} is enough to check that
\[id_{L\underline{\overline{\#}}H}\ast S(x\due h)=\varepsilon(x \due h) 1_L \due 1_H=S\ast id_{L\underline{\overline{\#}}H}(x\due h).\]
Indeed, let $x\in L$ and $h\in H$,
	\begin{eqnarray*}
		id_{L\underline{\overline{\#}}H}\ast S(x\due h)&=& (x_1\due {h_1}^0)S(x_2 {h_1}^1\due h_2)\\
		&=& (x_1\due {h_1}^0)(1_L \due S_H({h_2}^0))(S_L({h_1}^1{h_2}^1)S_L(x_2)\due 1_H)\\
		&\stackrel{\ref{part_comod_coalg}}{=}& (x_1 \due {h^0}_1)(1_L\due S_H({h^0}_2))(S_L(h^1)S_L(x_2)\due 1_H)\\
		&=& (x_1 ({h^0}_1\rightharpoonup 1_L)\due {h^0}_2 S_H({h^0}_3))(S_L(h^1)S_L(x_2)\due 1_H)\\
		&=& (x_1 (h^0\rightharpoonup 1_L)\due 1_H)(S_L(h^1)S_L(x_2)\due 1_H)\\
		&=& x_1(h^0\rightharpoonup 1_L)S_L(h^1)S_L(x_2)\due 1_H\\
		&\stackrel{(\ref{I'})}{=}& x_1\varepsilon_H({1_H}^0)\varepsilon_L(h\rightharpoonup 1_L) {1_H}^1 S_L(x_2)\due 1_H\\
		&\stackrel{3.5}{=}& \varepsilon_H({1_H}^0) \varepsilon_L(h\rightharpoonup 1_L) x_1 S_L(x_2){1_H}^1 \due 1_H \\
		&=& \varepsilon_L(x)\varepsilon_L(h\rightharpoonup 1_L)\varepsilon_H({1_H}^0){1_H}^1 \due 1_H\\
		&=& \varepsilon(x\due h) 1_L\due 1_H
	\end{eqnarray*}
	
	On the other hand,
	\begin{eqnarray*}
		S\ast id_{L\underline{\overline{\#}}H}(x\due h)&=& S(x_1\due {h_1}^0)(x_2 {h_1}^1\due h_2)\\
		&=& (1_L \due S_H({h_1}^{00}))(S_L({h_1}^{01})S_L(x_1)\due 1_H)(x_2 {h_1}^1 \due h_2)\\
		&=& (1_L \due S_H({h_1}^{00}))(S_L({h_1}^{01})S_L(x_1)x_2{h_1}^1\due h_2)\\
		&\stackrel{\ref{part_comod_coalg}}{=}& \varepsilon_L(x) (1_L \due S_H({h_1}^0))(S_L({{h_1}^1}_1) {{h_1}^1}_2 \varepsilon_H({h_2}^0){h_2}^1\due h_3)\\
		&=& \varepsilon_L(x) (1_L \due S_H(h_1)) (\varepsilon_H({h_2}^0) {h_2}^1 \due h_3)\\
		&=& \varepsilon_L(x) (1_L \due S_H(h_1)) \varepsilon_H({h_2}^0)\varepsilon_H({h_3}^0) {h_2}^1 {h_3}^1 \underline{\#} h_4\\
		&\stackrel{\ref{part_comod_coalg}}{=}& \varepsilon_L(x)(1_L\due S_H(h_1))\varepsilon_H({h_2}^0) {h_2}^1\underline{\#} h_3\\
		&=& \varepsilon_L(x) (1_L\due S_H(h_1))(1_L \due h_2)\\
		&=& \varepsilon_L(x) (S_H(h_2)\rightharpoonup 1_L)\due S_H(h_1)h_3\\
		&\stackrel{\ref{condicoes}}{=}& \varepsilon_L(x) \varepsilon_L(S_H(h_2)\rightharpoonup 1_L) 1_L\due S_H(h_1)h_3 \\	
		&\stackrel{\ref{condicoes}}{=}& \varepsilon_L(x) \varepsilon_L(h_2\rightharpoonup 1_L) 1_L\due S_H(h_1)h_3  \\
		&=& \varepsilon_L(x) \varepsilon_L((h_2\rightharpoonup 1_L){h_3}^1) 1_L\due S_H(h_1){h_3}^0 \\
		&\stackrel{3.5}{=}& \varepsilon_L(x) \varepsilon_L({h_2}^1(h_3\rightharpoonup 1_L)) 1_L\due S_H(h_1){h_2}^0 \\
		&=& \varepsilon_L(x)\varepsilon_L(h\rightharpoonup 1_L) 1_L\due 1_H\\
		&=& \varepsilon(x \due h) 1_L \due 1_H.
	\end{eqnarray*}
\end{proof}
The Hopf  algebra  $L\due H$ will be called the \emph{partial bismash product} corresponding to the quasi-abelian partial matched pair $(H,L)$.

\begin{pro}\label{exemplo2} Under the conditions of Lemma \ref{exemplo}, if $(H,L)$ is a  quasi-abelian partial matched pair, then $L\overline{\underline{\#}}H$ is a Hopf algebra.
\end{pro}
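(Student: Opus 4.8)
The plan is to deduce everything from Theorem \ref{anti}. By Lemma \ref{exemplo}, the pair $(H,L)$ with $h\rightharpoonup a=\lambda(h)a$ and $\rho(h)=h\otimes z$ is a quasi-abelian partial matched pair precisely when $z$ is central in $L$ and $\lambda(h_1)h_2=h_1\lambda(h_2)$, so by Theorem \ref{principal} the bialgebra $L\due H$ is already in place and the only missing piece is an antipode. Hence it suffices to verify that the two hypotheses \eqref{I'} and \eqref{II'} of Lemma \ref{condicoes} hold for this concrete $\rightharpoonup$ and $\rho$; Theorem \ref{anti} then supplies the antipode and finishes the proof.

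First I would specialize \eqref{I'} and \eqref{II'}. Substituting $h^0=h$, $h^1=z$, ${1_H}^0=1_H$, ${1_H}^1=z$, $h\rightharpoonup 1_L=\lambda(h)1_L$ and $\varepsilon_L(h\rightharpoonup 1_L)=\lambda(h)$, equation \eqref{I'} collapses to
\[\lambda(h)\,1_L\otimes S_L(z)=\lambda(h)\,1_L\otimes z,\]
which, taking $h=1_H$ and using $\lambda(1_H)=1$, is equivalent to $S_L(z)=z$; likewise \eqref{II'} collapses to $\varepsilon_H(h)\lambda(S_H(g))\,z=\varepsilon_H(h)\lambda(g)\,z$, which, taking $h=1_H$ and using $z\neq 0$ (as $\varepsilon_L(z)=1$), is equivalent to $\lambda\circ S_H=\lambda$. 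Thus the whole statement reduces to two $S$-invariance facts: $S_L(z)=z$ and $\lambda\circ S_H=\lambda$.

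For $\lambda\circ S_H=\lambda$ I would work in the convolution algebra $H^{*}$. From $\lambda(h)\lambda(g)=\lambda(h_1)\lambda(h_2g)$ of Example \ref{mod_acao_corpo} one gets, with $g=1_H$, that $\lambda$ is convolution-idempotent, and, evaluating this identity on the element $\sum h_1\otimes S_H(h_2)$ and using $h_2S_H(h_3)=\varepsilon_H(h_2)1_H$, that $\lambda*(\lambda\circ S_H)=\lambda$. The quasi-abelian condition $\lambda(h_1)h_2=h_1\lambda(h_2)$ says exactly that $\lambda$ is central in $H^{*}$, i.e. $f*\lambda=\lambda*f$ for all $f\in H^{*}$; this yields both $(\lambda\circ S_H)*\lambda=\lambda$ and a right-handed version $\lambda(a_1b)\lambda(a_2)=\lambda(a)\lambda(b)$ of the partial-character identity. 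Feeding that right-handed identity into $\sum\lambda(S_H(h_1))\lambda(h_2)$ and collapsing with the antipode via $\lambda(S_H(h))=\sum\lambda(S_H(h_1))\lambda(S_H(h_2)h_3)$ gives $(\lambda\circ S_H)*\lambda=\lambda\circ S_H$. Comparing the two evaluations of $(\lambda\circ S_H)*\lambda$ forces $\lambda\circ S_H=\lambda$. For $S_L(z)=z$ I would run the dual computation in $L$: the relation $\Delta_L(z)(1\otimes z)=z\otimes z$ of Example \ref{prop cc}, together with $m_L(S_L\otimes\mathrm{id})$ and $\varepsilon_L(z)=1$, gives $S_L(z)z=z$ at once, and centrality gives $zS_L(z)=z$. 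The key step is the companion relation $\Delta_L(z)(z\otimes 1)=z\otimes z$ (the exact dual of the right-handed identity, obtained from $\Delta_L(z)(1\otimes z)=z\otimes z$ and the centrality of $z$); applying $m_L(S_L\otimes\mathrm{id})$ to it yields $S_L(z)=S_L(z)z$, which combined with $S_L(z)z=z$ gives $S_L(z)=z$.

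The main obstacle is precisely these two $S$-invariance identities. They do not follow from idempotency alone — a central idempotent need not be fixed by the antipode — so the argument must exploit the full partial-character and partial-coaction axioms together with the quasi-abelian hypothesis (centrality of $z$ in $L$ and of $\lambda$ in $H^{*}$) to force the required symmetry; establishing the companion relation $\Delta_L(z)(z\otimes 1)=z\otimes z$ and its $\lambda$-analogue is the delicate point. Once $S_L(z)=z$ and $\lambda\circ S_H=\lambda$ are secured, \eqref{I'} and \eqref{II'} hold, and Theorem \ref{anti} delivers the antipode $S(x\due h)=(1_L\due S_H(h^0))(S_L(h^1)S_L(x)\due 1_H)$, so that $L\due H$ is a Hopf algebra.
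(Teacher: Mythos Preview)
Your overall strategy is precisely the paper's: reduce \eqref{I'} and \eqref{II'} to the two invariance facts $S_L(z)=z$ and $\lambda\circ S_H=\lambda$, then invoke Theorem \ref{anti}. Your treatment of $\lambda\circ S_H=\lambda$ in convolution language is essentially a rephrasing of the paper's element-wise chain
\[
\lambda(h)=\lambda(h_1)\lambda(h_2S_H(h_3))=\lambda(h_1)\lambda(S_H(h_2))=\lambda(S_H(h_1))\lambda(h_2)=\lambda(S_H(h_2))\lambda(S_H(h_1)h_3)=\lambda(S_H(h)),
\]
and is correct.

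There is, however, a genuine gap in your argument for $S_L(z)=z$. You claim the ``companion relation'' $\Delta_L(z)(z\otimes 1)=z\otimes z$ is obtained from $\Delta_L(z)(1\otimes z)=z\otimes z$ together with centrality of $z$, as the ``exact dual'' of the right-handed identity for $\lambda$. But the dual of ``$\lambda$ is central in the convolution algebra $H^{*}$'' is that $z$ is \emph{cocentral} in $L$, i.e.\ $\Delta_L(z)=\Delta_L^{\mathrm{op}}(z)$; mere algebra-centrality of $z$ does not give you $z_1z\otimes z_2=z\otimes z$, so your derivation of $S_L(z)=S_L(z)z$ from it is unjustified. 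The paper sidesteps the companion relation entirely with the one-line computation
\[
z=S_L(z_1)z_2z=S_L(z)z=zS_L(z)=z_1S_L(z_2z)=z_1S_L(zz_2)=z_1S_L(z_2)S_L(z)=S_L(z),
\]
which uses only the original relation $z_1\otimes z_2z=z\otimes z$, centrality of $z$ (to commute $z$ past $z_2$ and past $S_L(z)$), and the anti-multiplicativity of $S_L$. Replacing your companion-relation step by this chain closes the gap and makes your proof coincide with the paper's.
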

\begin{proof} By Lemma \ref{exemplo} and Theorem \ref{principal} $L\overline{\underline{\#}}H$ is a bialgebra. Now, to show (\ref{I'}) and (\ref{II'}), it is enough to prove that $S_L(z)=z$ and $\lambda(S_H(h))=\lambda(h)$, for all $h\in H$.
	Indeed,
	\begin{eqnarray*}
		z &=& S_L(z_1)z_2z=S_L(z)z=z S_L(z)\\
		&=& z_1 S_L(z_2 z)= z_1 S_L(zz_2)=z_1 S_L(z_2) S_L(z)\\
		&=& S_L(z).
	\end{eqnarray*}
	And,
	\begin{eqnarray*}
		\lambda(h) &=& \lambda(h_1)\lambda(h_2 S_H(h_3))=\lambda(h_1)\lambda(S_H(h_2))\\
		&=& \lambda(S_H(h_1))\lambda(h_2)= \lambda(S_H(h_2))\lambda(S_H(h_1)h_3)\\
		&=& \lambda(S_H(h_1))\lambda(S_H(h_2)h_3)=\lambda(S_H(h)).
	\end{eqnarray*}
	Hence, by Theorem \ref{anti}, $L\overline{\underline{\#}}H$ is a Hopf algebra.
\end{proof}

\section{Examples and Properties}

 Now we present some examples and properties of Hopf algebras constructed using the theory developed in the previous section.

\begin{exa}\label{agora}
Consider $N$ a  subgroup of a finite group $G$ and $N'$ a subgroup of any group $G'$, then $\Bbbk G^{\ast}$ is a left partial $\Bbbk G'$-module algebra with partial action given by
	\begin{eqnarray*}
		\rightharpoonup: \Bbbk G'\otimes \Bbbk G^{\ast} & \longrightarrow & \Bbbk G^{\ast}\\
		g\otimes p_h & \longmapsto & \left\{
		\begin{array}{rl}
			1_{\Bbbk}p_h & \text{, } g\in N',\\
			0 & \text{, otherwise }
		\end{array} \right.
	\end{eqnarray*}
 and $\Bbbk G'$  is a right partial $\Bbbk G^{\ast}$-comodule coalgebra via $\rho(g)=g\otimes z$, for all $g\in G'$, where $z= \sum\limits_{h\in N} p_h\in\Bbbk G^{\ast}$. Then by Lemma \ref{exemplo} and Proposition \ref{exemplo2}, $\Bbbk G^{\ast}  \underline{\overline{\#}} \Bbbk G'$ is a Hopf algebra.
\end{exa}

\begin{exa} \label{exenormal}Let $N$ be a normal subgroup of a finite group $G$ such that $char(\Bbbk)\nmid|N|$. Therefore $\Bbbk G$ is a left partial $\Bbbk G^*$-module algebra given by
	\begin{eqnarray*}
		\rightharpoonup: \Bbbk G^*\otimes \Bbbk G & \longrightarrow & \Bbbk G\\
		p_g\otimes h & \longmapsto & \left\{
		\begin{array}{rl}
			\frac{1}{|N|}h & \text{, } g\in N,\\
			0 & \text{, otherwise }
		\end{array} \right.
	\end{eqnarray*}
and $\Bbbk G^*$ is a right partial $\Bbbk G$-comodule coalgebra given by $\rho(p_g)=p_g \otimes z$, for all $g\in G$, with $z = \sum\limits_{g\in N} \frac{1}{|N|} g$. Again by Lemma \ref{exemplo} and Proposition \ref{exemplo2}, $\Bbbk G \due \Bbbk G^*$ is a Hopf algebra.
\end{exa}

\begin{exa}
	Let $H$ be a Hopf algebra with invertible antipode. Consider $H$ a left partial $H^{op}$-module algebra given by $h\rightharpoonup a=S(h_1)ah_3\lambda(h_2)$, such that $\lambda: H^{op}\longrightarrow \Bbbk$ satisfies \[\lambda(h)\lambda(l)=\lambda(h_1)\lambda(h_2 \cdot_{op} l), \lambda(1_H)= 1_{\Bbbk}\,\ \text{and}\,\ \lambda(h_1)h_2 = h_1\lambda(h_2).\] Also consider $H^{op}$ a right partial $H$-comodule coalgebra with partial coaction given by $\rho(h)=h_2\otimes S(h_1)zh_3$, where z is central element in $H$ such that $\varepsilon(z) = 1_{\Bbbk}$ and $z \otimes z = \Delta(z)(1 \otimes z)$
	
	Then $(H^{op}, H)$ is a quasi-abelian matched pair that satisfies (\ref{I'}) and (\ref{II'}), hence $H \underline{\overline{\#}} H^{op}$ is a Hopf algebra.
\end{exa}

We are also able to construct Hopf algebras from partial (co)actions even if the partial matched pair is not necessarily quasi-abelian.

\begin{pro}\label{calma} Let $L$ be the left partial $H$-module algebra given by Example \ref{mod_acao_corpo} and let $H$ be the right partial $L$-comodule coalgebra from Example \ref{prop cc}.	Then $(H,L)$ is a partial matched pair if and only if the following properties hold for all $x\in L$ and $h\in H$:
	\begin{enumerate}
		\item[(i)] $xz = zxz$;
		\item[(ii)] $\lambda(h_1)h_2\lambda(h_3) = \lambda(h_1)h_2$.
	\end{enumerate}
\end{pro}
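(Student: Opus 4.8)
The plan is to reduce everything to the single compatibility axiom (iii) of Definition \ref{partial matched pair}. First I would observe that axioms (i) and (ii) of that definition are exactly the standing hypotheses (that $L$ is the partial module algebra of Example \ref{mod_acao_corpo} and $H$ the partial comodule coalgebra of Example \ref{prop cc}), and that axioms (iv) and (v) are immediate: since $h\rightharpoonup x=\lambda(h)x$ one has $\varepsilon_L(h\rightharpoonup x)=\lambda(h)\varepsilon_L(x)=\varepsilon_L(h\rightharpoonup 1_L)\varepsilon_L(x)$, and since $\rho(h)=h\otimes z$ with $\varepsilon_H(1_H)=1_{\Bbbk}$ one has $\rho(1_H)=1_H\otimes z=1_H\otimes\varepsilon_H({1_H}^0){1_H}^1$. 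Hence $(H,L)$ is a partial matched pair if and only if condition (iii) holds, and the whole content of the proposition is to show that (iii) is equivalent to items (i) and (ii) of the statement.

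Next I would expand condition (iii) in the form given in Remark \ref{troca}. Using $h\rightharpoonup x=\lambda(h)x$, $\rho(h)=h\otimes z$ (so $h^0=h$, $h^1=z$) and $z^2=z$ (guaranteed by Example \ref{prop cc}), the left-hand side becomes $\lambda(h_1)\,h_2g\otimes x_1\otimes x_2z$ and the right-hand side becomes $\lambda(h_1)\lambda(h_2)\lambda(h_4)\,h_3g\otimes x_1\otimes zx_2z$. The crucial structural remark is that the $h$-variable and the $x$-variable decouple, so each side is a product of an element of $H$ with an element of $L\otimes L$: writing $A=\sum\lambda(h_1)h_2$ and $B=\sum\lambda(h_1)\lambda(h_2)\lambda(h_4)h_3$, condition (iii) reads
\[
(Ag)\otimes\left(\sum x_1\otimes x_2z\right)=(Bg)\otimes\left(\sum x_1\otimes zx_2z\right)
\]
for all $g,h\in H$ and $x\in L$.

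To identify $B$, I would use the defining relation of $\lambda$ from Example \ref{mod_acao_corpo} with $g=1_H$, namely $\sum\lambda(h_1)\lambda(h_2)=\lambda(h)$ (idempotency in the convolution algebra $H^\ast$). Applying this to the consecutive factors $h_1,h_2$ and using coassociativity collapses $B$ to $B=\sum\lambda(h_1)h_2\lambda(h_3)$. Now I would separate the two identities. Applying $id_H\otimes id_L\otimes\varepsilon_L$ to the displayed equation and using $\varepsilon_L(z)=1_{\Bbbk}$ gives $(Ag)\otimes x=(Bg)\otimes x$; specializing $x=1_L$ and $g=1_H$ (and using injectivity of $u\mapsto u\otimes 1_L$) yields $A=B$, which is precisely item (ii). Granting $A=B$, setting $h=g=1_H$ (so that $A=1_H$) in the displayed equation gives $\sum x_1\otimes x_2z=\sum x_1\otimes zx_2z$; applying $\varepsilon_L\otimes id_L$ to this returns $xz=zxz$, and conversely $xz=zxz$ for all $x$ means the two linear maps $y\mapsto yz$ and $y\mapsto zyz$ coincide, so applying $id_L\otimes(-)$ to $\Delta_L(x)$ recovers the tensor identity; thus the $L$-part is equivalent to item (i). The converse direction is immediate: if (i) and (ii) hold then $A=B$ and the two $L\otimes L$-expressions agree, so the displayed equation, hence (iii), holds.

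The main obstacle is bookkeeping rather than conceptual: getting the Sweedler indices right when collapsing $\lambda(h_1)\lambda(h_2)\lambda(h_4)h_3$ to $\lambda(h_1)h_2\lambda(h_3)$ through the idempotency relation and coassociativity, and rigorously justifying the decoupling of the $H$- and $L$-parts of condition (iii). The latter rests on the independence of the variables $h$ and $x$ together with the injectivity of tensoring with the fixed nonzero vectors $1_L$ and $1_H$; once this separation is in place, extracting (i) and (ii) and verifying the converse are routine.
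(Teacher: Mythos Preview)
Your proof is correct and follows the same computational route as the paper, which simply says the argument is analogous to that of Lemma \ref{exemplo}: expand condition (iii) of Definition \ref{partial matched pair} using $h\rightharpoonup x=\lambda(h)x$ and $\rho(h)=h\otimes z$, then compare the two sides. Your explicit separation of the $H$-part and the $L\otimes L$-part, together with the specializations $h=g=1_H$ and $x=1_L$, makes the biconditional more transparent than the paper's terse reference, but the underlying method is identical.
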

\begin{proof}
	It is analogous to the proof of Lemma \ref{exemplo}.
\end{proof}

\begin{cor}\label{melhor} Under the same conditions of  Proposition \ref{calma}, $(H,L)$ is a partial matched pair if and only if $L \underline{\overline{\#}} H$ is a Hopf algebra.
\end{cor}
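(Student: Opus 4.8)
The plan is to reduce everything to Proposition \ref{calma}, which already identifies being a partial matched pair with the two conditions (i) $xz=zxz$ and (ii) $\lambda(h_1)h_2\lambda(h_3)=\lambda(h_1)h_2$. Hence it suffices to prove that these two conditions together are equivalent to $L\underline{\overline{\#}}H$ being a Hopf algebra. I would begin by specializing all the structure maps of Section 3 to the present data $h\rightharpoonup a=\lambda(h)a$ and $\rho(h)=h\otimes z$, so that a generic element becomes $\overline{x\underline{\#}h}=xz\underline{\#}h$ and the product, coproduct, counit and antipode of Theorems \ref{principal} and \ref{anti} take the explicit forms $(\overline{x\underline{\#}h})(\overline{y\underline{\#}g})=\overline{\lambda(h_1)xy\underline{\#}h_2g}$, $\Delta(\overline{x\underline{\#}h})=\overline{x_1\underline{\#}h_1}\otimes\overline{x_2z\underline{\#}h_2}$, $\varepsilon(\overline{x\underline{\#}h})=\varepsilon_L(x)\lambda(h)$ and $S(\overline{x\underline{\#}h})=(1_L\underline{\overline{\#}}S_H(h))(S_L(z)S_L(x)\underline{\overline{\#}}1_H)$.

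The essential point is that Theorems \ref{principal} and \ref{anti} were proved under the quasi-abelian hypothesis of Definition \ref{defabeliano}, which need not hold here; so I cannot invoke them as black boxes and must instead re-run their verifications with (i) and (ii) playing the role previously played by quasi-abelianness. Concretely I would check by hand that the product closes on the subspace and is associative, that $\Delta$ is coassociative and multiplicative with $\varepsilon$ a counit, and that the displayed $S$ is a two-sided convolution inverse of the identity. In every case the manipulations split into an $L$-part and an $H$-part. On the $L$-side each discrepancy has the shape $zyz$ versus $yz$, and condition (i), in the equivalent form $zyz=yz$, absorbs it; in particular the factor $S_L(z)$ in the antipode disappears through $S_L(z)S_L(x)z=S_L(x)z$, using $S_L(z)z=z$ (which follows from the defining relation $\Delta_L(z)(1\otimes z)=z\otimes z$ of Example \ref{prop cc}) together with (i). On the $H$-side each discrepancy reduces, after the $L$-absorptions, to comparing $\lambda(h_1)h_2\lambda(h_3)$ with $\lambda(h_1)h_2$, which is exactly condition (ii).

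For the converse I would note that the candidate structure maps above are defined for the given partial action and coaction irrespective of any matched-pair hypothesis, so I may assume they satisfy the Hopf (indeed already the bialgebra) axioms and then read off (i) and (ii). Requiring the product to close on and restrict correctly to the subspace forces $zyz=yz$ for all $y$ — this is isolated by evaluating $(\overline{1_L\underline{\#}h})(\overline{y\underline{\#}g})$ against the multiplication formula — which is (i); requiring the compatibility of the coalgebra and algebra structures (multiplicativity of $\Delta$), specialized by setting $x=y=1_L$ and applying $\varepsilon_L$ to the $L$-slots, forces the trailing $\lambda(h_3)$ produced by the coproduct to be swallowed, which is precisely (ii). By Proposition \ref{calma} these two identities say exactly that $(H,L)$ is a partial matched pair, completing the equivalence.

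I expect the main obstacle to lie in the forward direction, and within it in the antipode computation: since quasi-abelianness fails, the clean identities $S_L(z)=z$ and $\lambda\circ S_H=\lambda$ used in Proposition \ref{exemplo2} (and encoded in \eqref{I'} and \eqref{II'}) are no longer literally available, and one must instead verify that the weaker absorption $S_L(z)S_L(x)z=S_L(x)z$ on the $L$-side and the recombination supplied by (ii) on the $H$-side are enough to make the convolution inverse work out. Keeping the Sweedler bookkeeping under control through the repeated coproducts appearing in the axiom $\mathrm{id}\ast S=\varepsilon(\,\cdot\,)\,1=S\ast\mathrm{id}$ is the delicate technical step, and it is there that I would spend most of the effort.
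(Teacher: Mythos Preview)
Your approach is correct and is essentially what the paper does: its proof is literally the single line ``Straightforwardly from the definition of $L \underline{\overline{\#}} H$'', so any direct verification of the Hopf-algebra axioms for the explicit structure maps is in the intended spirit. One small imprecision in your outline: the product inherited from $L\#H$ \emph{always} closes on $L\underline{\overline{\#}}H$ (one computes $(xz\#\pi(h))(yz\#\pi(g))=xzyz\#\pi(h)\pi(g)$ and $\pi(h)\pi(g)\in\pi(H)$ using only the identity from Example~\ref{mod_acao_corpo}), so condition~(i) is not forced by closure per se but rather by the requirement that this restricted product agree with the formula of Theorem~\ref{principal}(i), and by the multiplicativity of $\Delta$; your extraction of (ii) from multiplicativity of $\Delta$ with $x=y=1_L$ is correct.
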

\begin{proof}
	Straightforwardly from the definition of $L \underline{\overline{\#}} H$.
\end{proof}

\begin{exa}
	Let $G$ be a group, $N$ a subgroup of $G$ and $H$ a normal subgroup of $G$ such that $char(\Bbbk)\nmid|H|$. Consider $\Bbbk G$ a left partial $\Bbbk G$-module algebra given by $g \rightharpoonup h =\lambda(g)h$, with $\lambda(g)=1_{\Bbbk}$ if $g\in N$, and $\lambda(g)=0$ otherwise. Also, regard $\Bbbk G$ as a right partial $\Bbbk G$-comodule coalgebra with partial coaction given by $\rho(g)=g \otimes z$, where $z=\sum\limits_{n\in H} \frac{1}{|H|}n$. Then, $(\Bbbk G, \Bbbk G)$ is a partial matched pair and, by Corollary \ref{melhor}, $\Bbbk G \underline{\overline{\#}} \Bbbk G$ is a Hopf algebra.
\end{exa}

\begin{rem}\label{h4} Let $L$ be a Hopf algebra and $\mathbb{H}_4=\Bbbk\{1,c,x,xc\}$ the Sweedler algebra (in this case $char(\Bbbk)\neq 2$ is assumed). Given $y\in L$ and $a\in \mathbb{H}_4$, consider the partial action of $\mathbb{H}_4$ on $L$ given by $a\rightharpoonup y=\lambda(a)y$, where $\lambda(1)=1_{\Bbbk}$, $\lambda(c)=0$ and $\lambda(xc)=-\lambda(x)$.
Under such assumptions, $L \underline{\overline{\#}} \mathbb{H}_4$ is not a Hopf algebra for any partial coaction of $L$ on $\mathbb{H}_4$, because $\lambda$ does not satisfy the item (ii) of Proposition \ref{calma}.  

\vu

Likewise, if we consider the partial coaction of $L$ on $\mathbb{H}_4$ given by $\rho(y)=y\otimes z$,	for all $y\in L$ and $z=\frac{1}{2} + \frac{1}{2}c + \beta xc$, with $\beta\in\Bbbk$, then $\mathbb{H}_4 \underline{\overline{\#}} L$ is also not a Hopf algebra, for any partial action of $L$ on $\mathbb{H}_4$, because the item (i) of Proposition \ref{calma} is not satisfied.
\end{rem}

\begin{exa}
	Let $A_{2,2}=\Bbbk\langle g,h,x \ | \ g^2=h^2=1, x^2=0, gx=-xg, hx=-xh, gh=hg\rangle$ be the Hopf algebra with coproduct given by $$\Delta(g)=g\otimes g, \ \Delta(h)=h\otimes h \ \mbox{and} \ \Delta(x)=x\otimes 1 + g\otimes x.$$ For $a,b\in A_{2,2}$, consider the partial action of $A_{2,2}$ on $A_{2,2}$ such that $a\rightharpoonup b=\lambda(a)b$, where  $$\lambda(1)=\lambda(g)=1, \quad\lambda(h) =\lambda(gh) =\lambda(x)= \lambda(gx) = \lambda(hx) =\lambda(ghx)= 0.$$ Then $\lambda(a_1)a_2=a_1\lambda(a_2)$, for all $a\in A_{2,2}$.

\vu
		
		Also, consider the partial coaction of $A_{2,2}$ on $A_{2,2}$ such that $\rho(b)=b\otimes z$,	for all $b\in A_{2,2}$, where $z=\frac{1+gh}{2}$ is a central element in $ A_{2,2}$ (also here, $char(\Bbbk)\neq 2$ ).
		
		In this case, also thanks to Lemma \ref{exemplo} and Proposition \ref{exemplo2}, $A_{2,2}\due A_{2,2}$ is a Hopf algebra. Note that, in this case, $A_{2,2}\due A_{2,2}\cong \mathbb{H}_4\otimes \mathbb{H}_4$ as Hopf algebras.
\end{exa}

\begin{exa}\label{exemplosemisimples}
	Assume again that $char(\Bbbk)\neq 2$ and let $A_4'$ and $H$ be the following Hopf algebras:
\begin{itemize}	
\item	$H := \Bbbk   \langle  g,\ h,\ x, \ | \ g^4=h^2=(gh)^4=1, \ x^2=0, \ xg=qgx, \ xh=hx \rangle,$ where $q$ is a fourth-primitive root of $1$, with coproduct given by
		$$\Delta (x) = x \otimes 1 + g^2h \otimes x, \ \Delta (g) = g \otimes g \  \textrm{ and } \ \Delta (h) = h \otimes h; $$
		
\item		$A_4':= \Bbbk\langle c,y \ | \ c^4=1, \ y^2=0, \ cy=-yc\rangle$ with coproduct given by $\Delta(c)=c\otimes c$ and $\Delta(y)=y \otimes 1+ c\otimes y$.
\end{itemize}	
	
Consider the partial action of $H$ on $A_4'$ such that $a\rightharpoonup b=\lambda(a)b$, for all $a\in H$ and $b\in A_4'$, where
			$$\lambda(1) = \lambda(h)= \lambda(g^2)= \lambda(g^2h) = 1, \;\;\;  \lambda(g)= \lambda(g^3) = \lambda(gh) = \lambda(g^3h) = 0,$$
		$$\lambda(x)= \lambda(gx)= \lambda(g^2x)= \lambda(g^3x)= \lambda(hx)= \lambda(ghx)=  \lambda(g^2hx) = \lambda(g^3hx) = 0.$$
		Then $\lambda(a_1)a_2=a_1\lambda(a_2)$, for all $a\in H$.

\vu
		
		Also, consider the partial coaction of $A_4'$ on $H$ such that $\rho(b)=b\otimes z$,	for all $b\in H$, where $z=\frac{1+c^2}{2}$ is a central element in $ A_4'$.

\vu
		
		Thus, by Lemma \ref{exemplo} and Proposition \ref{exemplo2}, $A_4'\due H$ is a Hopf algebra. In this case, $A_4'\due H \cong \mathbb{H}_4\otimes A_{2,2} $ as Hopf algebras.
		\end{exa}

\begin{pro}\label{prointegral}
Let $\Bbbk$ be an algebraically closed field of characteristic zero, $L$ and $H$ finite dimensional Hopf algebras and $(H,L)$ an abelian partial matched pair that satisfies the conditions  \emph{(\ref{I'})} and \emph{(\ref{II'})}. If $0\neq\alpha\in \int_l^L$ and $0\neq t\in \int_l^H$, then $\alpha\underline{\overline{\#}} t\in\int_l^{L\underline{\overline{\#}} H}$.
\end{pro}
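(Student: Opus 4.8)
The plan is to check directly the defining identity of a left integral, that is, to prove $(x\due h)(\alpha\due t)=\varepsilon(x\due h)\,(\alpha\due t)$ for all $x\in L$ and $h\in H$; by Theorem \ref{anti} the object $L\due H$ is a finite dimensional Hopf algebra, so this identity is exactly the assertion $\alpha\due t\in\int_l^{L\due H}$. First I would expand the left-hand side with the product of Theorem \ref{principal}(i), getting $(x\due h)(\alpha\due t)=\overline{x(h_1\rightharpoonup\alpha)\underline{\#}h_2t}$, and then use $t\in\int_l^H$ to collapse the $H$-leg: since $h_2t=\varepsilon_H(h_2)t$ and $h_1\varepsilon_H(h_2)=h$, this becomes $\overline{x(h\rightharpoonup\alpha)\underline{\#}t}$. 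On the other side the counit of Theorem \ref{principal}(iv) gives $\varepsilon(x\due h)\,(\alpha\due t)=\varepsilon_L(x)\,\varepsilon_L(h\rightharpoonup1_L)\,(\alpha\due t)$. Thus the whole statement reduces to the single claim
\[ h\rightharpoonup\alpha=\varepsilon_L(h\rightharpoonup1_L)\,\alpha\qquad(h\in H), \]
which I will call $(\star)$: granting it, and using $x\alpha=\varepsilon_L(x)\alpha$ (because $\alpha$ is a left integral), the two sides coincide at once.

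Before attacking $(\star)$ I would set $\mu(h):=\varepsilon_L(h\rightharpoonup1_L)$ and record two facts about $L$. Since $L$ is commutative, finite dimensional and $\mathrm{char}(\Bbbk)=0$, Cartier's theorem makes $L$ reduced; as $\alpha^2=\varepsilon_L(\alpha)\alpha$, the vanishing $\varepsilon_L(\alpha)=0$ would force $\alpha$ to be a nonzero nilpotent, which is impossible, so $\varepsilon_L(\alpha)\neq0$. Moreover $\dim_{\Bbbk}\int_l^L=1$. Hence it is enough to prove that $h\rightharpoonup\alpha$ again belongs to $\int_l^L$: then $h\rightharpoonup\alpha=c_h\,\alpha$ for some scalar $c_h$, and applying $\varepsilon_L$ and item (iv) of Definition \ref{partial matched pair} (which yields $\varepsilon_L(h\rightharpoonup\alpha)=\mu(h)\varepsilon_L(\alpha)$) gives $c_h=\mu(h)$, i.e. exactly $(\star)$.

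The core of the proof — and the step I expect to be the main obstacle — is to show that the partial action preserves left integrals, i.e. $y(h\rightharpoonup\alpha)=\varepsilon_L(y)(h\rightharpoonup\alpha)$ for all $y\in L$. This is the partial counterpart of the classical fact that a module-algebra action carries integrals to integrals; the classical argument fails because here $h\rightharpoonup1_L\neq\varepsilon_H(h)1_L$, and repairing it is exactly where the partiality must be controlled. The idea is to evaluate the auxiliary element
\[ P:=\textstyle\sum\big(h_1\rightharpoonup(S_H(h_2)\rightharpoonup y)\big)\,(h_3\rightharpoonup\alpha) \]
in two ways, using throughout the identities $h\rightharpoonup\alpha=\mu(h_1)(h_2\rightharpoonup\alpha)=\mu(h_2)(h_1\rightharpoonup\alpha)$, which follow from axiom (ii) of Definition \ref{part_mod_alg} and Lemma \ref{condicoes}(ii). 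On one hand, expanding the inner action by axiom (iii) of Definition \ref{part_mod_alg} and replacing $h\rightharpoonup1_L$ by $\mu(h)1_L$ via Lemma \ref{condicoes}(ii), then applying the antipode identity $\sum h_1\otimes h_2S_H(h_3)\otimes h_4=\sum h_1\otimes1_H\otimes h_2$, should give $P=y(h\rightharpoonup\alpha)$. On the other hand, using the cocommutativity of $H$ to bring the first two legs side by side, axiom (ii) of Definition \ref{part_mod_alg} to fuse them into $\sum\bar h_1\rightharpoonup\big((S_H(\bar h_2)\rightharpoonup y)\,\alpha\big)$, then the left-integral property $(S_H(\bar h_2)\rightharpoonup y)\alpha=\varepsilon_L(S_H(\bar h_2)\rightharpoonup y)\alpha$, item (iv) of Definition \ref{partial matched pair}, and Lemma \ref{condicoes}(i) in the form $\mu\circ S_H=\mu$, should give $P=\varepsilon_L(y)(h\rightharpoonup\alpha)$. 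Comparing the two evaluations yields the desired identity $y(h\rightharpoonup\alpha)=\varepsilon_L(y)(h\rightharpoonup\alpha)$.

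The delicate part is the Sweedler-leg bookkeeping under coassociativity and cocommutativity, and the observation that Lemma \ref{condicoes}(i) and (ii) — the genuinely partial inputs — are exactly what make the two evaluations of $P$ agree; the remaining manipulations are routine. Once $(\star)$ is established, substituting it into the reduction of the first paragraph completes the proof that $\alpha\due t\in\int_l^{L\due H}$.
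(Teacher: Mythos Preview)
Your proposal is correct and follows the same overall strategy as the paper: reduce everything to the claim $(\star)$ that $h\rightharpoonup\alpha=\varepsilon_L(h\rightharpoonup1_L)\,\alpha$, observe that $\dim\int_l^L=1$ and $\varepsilon_L(\alpha)\neq0$ (by semisimplicity of the commutative finite-dimensional $L$ in characteristic zero), and then show $h\rightharpoonup\alpha\in\int_l^L$.

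The only noticeable difference lies in how the core identity $y(h\rightharpoonup\alpha)=\varepsilon_L(y)(h\rightharpoonup\alpha)$ is obtained. The paper works directly with $x(h\rightharpoonup\alpha)$, inserts $h_2S_H(h_3)$ in front of $x$, and then uses the \emph{commutativity of $L$} twice to bring $\alpha$ into position to absorb the extra factor; your argument instead builds the auxiliary element $P$ and evaluates it two ways, relying on the \emph{cocommutativity of $H$} to fuse the legs. Both are legitimate uses of the ``abelian'' hypothesis, just exploiting its two halves; each route also needs Lemma~\ref{condicoes}(i)--(ii) at the same places. So your argument is a mild variant rather than a genuinely different proof, and the Sweedler bookkeeping you outline goes through exactly as you describe.
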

\begin{proof}
	First, we show that $h\rightharpoonup \alpha \in \int_{l}^{L}$, for all $h\in H$. Indeed, let $x\in L$,
	\begin{eqnarray*}
		x(h\rightharpoonup \alpha) &=& (h_2S_H(h_3)\rightharpoonup x)(h_1\rightharpoonup \alpha)\\
		&\stackrel{\ref{defabeliano}}{=}& (h_1\rightharpoonup \alpha)(h_2S_H(h_3)\rightharpoonup x)\\
		&=& h_1 \rightharpoonup (\alpha(S_H(h_2)\rightharpoonup x))\\
		&\stackrel{\ref{defabeliano}}{=}& h_1\rightharpoonup ((S_H(h_2)\rightharpoonup x)\alpha)\\
		&=& \varepsilon_L(S_H(h_2)\rightharpoonup x)(h_1\rightharpoonup \alpha)\\
		&\stackrel{\ref{partial matched pair}}{=}& \varepsilon_L(S_H(h_2)\rightharpoonup 1_L)\varepsilon_L(x)(h_1\rightharpoonup \alpha)\\
		&\stackrel{\ref{condicoes}}{=}& \varepsilon_L(h_2\rightharpoonup 1_L)\varepsilon_L(x)(h_1\rightharpoonup \alpha)(h_2\rightharpoonup 1_L)\\
		&\stackrel{\ref{igualdade importante}}{=}& \varepsilon_L(x)(h\rightharpoonup \alpha).
	\end{eqnarray*}

Thus, $h \rightharpoonup \alpha = k \alpha$, for some $k\in \Bbbk$. Also, since $L$ is commutative we have that $L$ is semisimple, that is, $\varepsilon_L(\alpha) \neq 0$. So,
\begin{eqnarray*}
k&=&\varepsilon_L(k\alpha)(\varepsilon_L(\alpha))^{-1}\\
&=& \varepsilon_L(h \rightharpoonup \alpha)(\varepsilon_L(\alpha))^{-1}\\
&\stackrel{\ref{partial matched pair}}{=}& \varepsilon_L(h \rightharpoonup 1_L)\varepsilon_L(\alpha)(\varepsilon_L(\alpha))^{-1}\\
&=& \varepsilon_L(h\rightharpoonup 1_L),
\end{eqnarray*}
for all $h\in H$. Then
	\begin{eqnarray*}
		(x\underline{\overline{\#}} h)(\alpha\underline{\overline{\#}} t)&=& x(h_1\rightharpoonup \alpha)\underline{\overline{\#}} h_2t\\
		&=& x(h\rightharpoonup \alpha) \underline{\overline{\#}} t\\
		&=& \varepsilon_L(x)\varepsilon_L(h\rightharpoonup 1_L)\alpha\underline{\overline{\#}}t\\
		&=& \varepsilon(x\underline{\overline{\#}} h)\alpha\underline{\overline{\#}}t,
	\end{eqnarray*}
for all $(x\underline{\overline{\#}} h) \in (L\underline{\overline{\#}} H)$. Therefore, $\alpha\underline{\overline{\#}} t\in\int_l^{L\underline{\overline{\#}} H}$.
\end{proof}

\begin{cor}\label{smash semis} Let $(H,L)$ be the partial matched pair of Proposition \ref{calma}, with $L$ and $H$ both finite dimensional Hopf algebras. Then $L \underline{\overline{\#}} H$ is a Hopf algebra and the following statements are equivalent:
	\begin{enumerate}
		\item[(i)] $L \underline{\overline{\#}} H$ is semisimple;
		\item[(ii)] $L \underline{\overline{\#}} H$ is separable;
		\item[(iii)] $L$ is semisimple and $\lambda\left(\int_{l}^{H}\right)\neq 0$ or $\lambda\left(\int_{r}^{H}\right)\neq 0$.
	\end{enumerate}
\end{cor}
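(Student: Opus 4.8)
The plan is to dispose of the three items in order. That $L\due H$ is a Hopf algebra is not something to argue separately: since $(H,L)$ is exactly the partial matched pair of Proposition~\ref{calma}, Corollary~\ref{melhor} already supplies the Hopf structure. For the equivalence $(i)\Leftrightarrow(ii)$ I would invoke the classical fact that a finite dimensional Hopf algebra is semisimple if and only if it is separable: separability always forces semisimplicity, and conversely the Larson--Sweedler criterion ``$\varepsilon$ does not vanish on a nonzero left integral'' is insensitive to base field extension, so a semisimple $L\due H$ stays semisimple over $\overline{\Bbbk}$ and is therefore separable. The real content is thus $(i)\Leftrightarrow(iii)$, which I would attack through the Maschke theorem for Hopf algebras: $L\due H$ is semisimple precisely when its one dimensional space of left integrals is not annihilated by the counit.

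First I would make everything explicit for the pair of Proposition~\ref{calma}. With $h\rightharpoonup y=\lambda(h)y$ and $\rho(h)=h\otimes z$, formula~(\ref{novaescrita}) collapses to $\overline{x\due h}=xz\due h=\lambda(h_1)xz\#h_2$, the product is $(\overline{x\due h})(\overline{y\due g})=\overline{\lambda(h_1)xy\due h_2g}$, and the counit is $\varepsilon(\overline{x\due h})=\varepsilon_L(x)\lambda(h)$ (because $h\rightharpoonup 1_L=\lambda(h)1_L$). The key step is to exhibit the integral: for $\alpha\in\int_l^L$ and $t\in\int_l^H$ the element $\alpha\due t$ is a \emph{left} integral of $L\due H$. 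Using only $x\alpha=\varepsilon_L(x)\alpha$, $h_2t=\varepsilon_H(h_2)t$, and the product and counit above, one checks
\[
(\overline{x\due h})(\overline{\alpha\due t})=\varepsilon_L(x)\lambda(h)\,\overline{\alpha\due t}=\varepsilon(\overline{x\due h})\,\overline{\alpha\due t};
\]
crucially this uses none of the quasi-abelian hypotheses, so it is legitimate in the present merely partial-matched-pair setting. Its counit is $\varepsilon(\overline{\alpha\due t})=\varepsilon_L(\alpha)\lambda(t)$. Here $L$ semisimple means $\varepsilon_L(\int_l^L)\neq0$, and $\lambda(\int_l^H)\neq0$ means $\lambda(t)\neq0$, since $\int_l^H$ is one dimensional.

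The direction $(iii)\Rightarrow(i)$ is then immediate. If $L$ is semisimple and $\lambda(\int_l^H)\neq0$, I pick $\alpha$ and $t$ with $\varepsilon_L(\alpha)=\lambda(t)=1$; then $\alpha\due t$ is a left integral with counit $1$, in particular nonzero, and Maschke yields semisimplicity. If instead only $\lambda(\int_r^H)\neq0$, I would run the mirror argument: semisimplicity is equally detected by right integrals, and the right-handed version of the computation above---carried out on $(L\due H)^{op}$ or via the companion partial matched pair $(L,H)$---produces, after a parallel (but, because the smash product is one-sided, slightly more delicate) computation, a nonzero right integral whose counit is nonzero exactly when $L$ is semisimple and $\lambda(\int_r^H)\neq0$. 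Comparing the two statements shows that, once $L$ is semisimple, $\lambda(\int_l^H)\neq0\Leftrightarrow\lambda(\int_r^H)\neq0$, which is precisely why the disjunction appears in $(iii)$.

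The genuine obstacle is $(i)\Rightarrow(iii)$, and it is a nonvanishing issue. Maschke furnishes a nonzero left integral $\Lambda$ with $\varepsilon(\Lambda)\neq0$, and since $\int_l^{L\due H}$ is one dimensional the element $\overline{\alpha\due t}$ is a scalar multiple of $\Lambda$; to read off $(iii)$ I must know that $\overline{\alpha\due t}$ is itself nonzero, for only then does $\varepsilon(\Lambda)\neq0$ force $\varepsilon_L(\alpha)\lambda(t)\neq0$. This is not free: rewriting $\overline{\alpha\due t}=\alpha z\otimes\lambda(t_1)t_2$ in $L\otimes H$ shows that it vanishes exactly when $\alpha z=0$ or $P(t)=0$, where $P(h)=\lambda(h_1)h_2$. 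I would control the two factors separately---using $\varepsilon_L(\alpha z)=\varepsilon_L(\alpha)$ together with the idempotent hypothesis $xz=zxz$ to force $\alpha z\neq0$, and $\varepsilon_H(P(t))=\lambda(t)$ together with the projection identity $\lambda(h_1)h_2\lambda(h_3)=\lambda(h_1)h_2$ of Proposition~\ref{calma} to pin down $\mathrm{im}\,P$ and show $P(t)\neq0$---thereby identifying $\int_l^{L\due H}=\Bbbk\,\overline{\alpha\due t}$ \emph{unconditionally}. Once this nonvanishing is established, the single computation $\varepsilon(\overline{\alpha\due t})=\varepsilon_L(\alpha)\lambda(t)$ yields both directions of $(i)\Leftrightarrow[\,L\text{ semisimple and }\lambda(\int_l^H)\neq0\,]$ at once, and the right-handed reformulation converts this into the form $(iii)$. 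I expect the bulk of the work to sit in this structural nonvanishing analysis, with the right-handed asymmetry a secondary technical point.
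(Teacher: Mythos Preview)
Your approach is essentially the paper's: Hopf structure from Corollary~\ref{melhor}, $(i)\Leftrightarrow(ii)$ dismissed as classical, and $(i)\Leftrightarrow(iii)$ via Maschke applied to the explicit element $\alpha\due t$ with $\alpha\in\int_l^L$, $t\in\int_l^H$, whose counit is $\varepsilon_L(\alpha)\lambda(t)$. The paper's argument is literally three lines: it writes ``Let $\alpha\due t\in\int_l^{L\due H}$'' and then chains the equivalences $L\due H$ semisimple $\Leftrightarrow \varepsilon(\alpha\due t)\neq0 \Leftrightarrow \varepsilon_L(\alpha)\neq0$ and $\lambda(t)\neq0$.

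You are in fact more scrupulous than the paper on two points it simply ignores. First, the paper never verifies that $\alpha\due t$ is a \emph{nonzero} left integral, which as you correctly note is what makes the backward implication $(i)\Rightarrow(iii)$ legitimate; it also does not redo the integral computation in this non-quasi-abelian setting (Proposition~\ref{prointegral} assumes an abelian pair and characteristic zero), whereas your direct check using only $\lambda$ and the product formula is the right thing to do here. Second, the paper's proof treats only the left-integral condition and says nothing about the $\lambda(\int_r^H)\neq0$ alternative in $(iii)$. So the extra work you anticipate---the nonvanishing analysis and the right-handed mirror---is work the paper does not do; your plan is the same strategy, executed with more care.
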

\begin{proof} The first assertion on $L \underline{\overline{\#}} H$ is ensured by Corollary \ref{melhor}.

\vu

	(i)$\Longleftrightarrow$ (ii) It is a trivial and well known result from the classical theory of Hopf algebras.
	
	(i)$\Longleftrightarrow$ (iii) Let $\alpha \underline{\overline{\#}} t\in \int_{l}^{L\underline{\overline{\#}} H}$ with $\alpha \in \int_{l}^{L}$ and $t \in \int_{l}^{H}$. Then
	\begin{eqnarray*}
		L \underline{\overline{\#}} H  \mbox{\ is \ semisimple} &\Leftrightarrow &\varepsilon(\alpha \underline{\overline{\#}} t)\neq 0\\
		&\Leftrightarrow&\varepsilon_L(\alpha)\neq 0 \ \mbox{and} \ \lambda(t)\neq 0 \\
		&\Leftrightarrow& L \mbox{ \ is \ semisimple \ and \ } \lambda\left(\int_{l}^{H}\right)\neq 0.
	\end{eqnarray*}
\end{proof}

\begin{pro}\label{dual do par combinado}
	Let $H$ and $L$ be finite dimensional Hopf algebras. If $(H,L)$ is a quasi-abelian partial matched pair, then $(L^{\ast}, H^{\ast})$ is a quasi-abelian partial matched pair, where $H^*$ is a right partial $L^*$-module algebra and $L^*$ is a left partial $H^*$-comodule coalgebra.
\end{pro}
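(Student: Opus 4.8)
The plan is to use finite dimensionality and transpose every structure map, so that the action of $H$ on $L$ becomes a coaction of $H^{\ast}$ on $L^{\ast}$ and the coaction of $L$ on $H$ becomes an action of $L^{\ast}$ on $H^{\ast}$. Recall that $H^{\ast}$ and $L^{\ast}$ are Hopf algebras with multiplication dual to comultiplication and vice versa, with $1_{H^{\ast}}=\varepsilon_H$ and $\varepsilon_{H^{\ast}}(\phi)=\phi(1_H)$ (and likewise for $L^{\ast}$). First I would define a right action $\blacktriangleleft\colon H^{\ast}\otimes L^{\ast}\to H^{\ast}$ as the transpose of $\rho$, namely $\langle \phi\blacktriangleleft\xi,\,h\rangle=\langle \phi,h^{0}\rangle\langle \xi,h^{1}\rangle$, and a left coaction $\xi\mapsto \xi^{(-1)}\otimes\xi^{(0)}\in H^{\ast}\otimes L^{\ast}$ on $L^{\ast}$ as the transpose of $\rightharpoonup$, namely $\langle \xi^{(-1)},h\rangle\langle \xi^{(0)},x\rangle=\langle \xi,\,h\rightharpoonup x\rangle$, for all $\phi\in H^{\ast}$, $\xi\in L^{\ast}$, $h\in H$ and $x\in L$. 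Under the identification $(H\otimes L)^{\ast}\cong H^{\ast}\otimes L^{\ast}$ both maps are well defined, and their variances ($H^{\ast}$ on the left of the coaction, $L^{\ast}$ on the right of the action) are exactly those demanded by the statement; this pair $(L^{\ast},H^{\ast})$ is the left--right mirror of $(H,L)$.

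Next I would establish the two one-sided structures by transposing the corresponding axioms. That $H^{\ast}$ is a right partial $L^{\ast}$-module algebra follows from dualizing Definition \ref{part_comod_coalg} applied to $\rho$: axiom (i) yields $\phi\blacktriangleleft 1_{L^{\ast}}=\phi$ (using $1_{L^{\ast}}=\varepsilon_L$), axiom (ii) becomes the multiplicativity $\phi\psi\blacktriangleleft\xi=(\phi\blacktriangleleft\xi_1)(\psi\blacktriangleleft\xi_2)$ since $\Delta_H$ transposes to the product of $H^{\ast}$, and the partial coassociativity axiom (iii), carrying the factor $\varepsilon_H({h_2}^{0})$, transposes to $(\phi\blacktriangleleft\xi)\blacktriangleleft\eta=(\phi\blacktriangleleft\xi\eta_1)(1_{H^{\ast}}\blacktriangleleft\eta_2)$; each is a direct pairing computation against an arbitrary $h\in H$. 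Dually, that $L^{\ast}$ is a left partial $H^{\ast}$-comodule coalgebra follows from transposing Definition \ref{part_mod_alg} applied to $\rightharpoonup$: the normalization $1_H\rightharpoonup x=x$ gives the counit axiom, the multiplicativity (ii) gives comultiplicativity of the coaction, and the partial associativity (iii) gives the partial comodule axiom.

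I would then dualize the compatibility conditions. Conditions (iv) and (v) of Definition \ref{partial matched pair} are statements about the counit of $L$ and about $\rho(1_H)$; pairing them against arbitrary elements of $H^{\ast}$ and $L^{\ast}$ and using $1_{H^{\ast}}=\varepsilon_H$, $\varepsilon_{H^{\ast}}(\phi)=\phi(1_H)$ turns them into the two normalization conditions (iv) and (v) required for $(L^{\ast},H^{\ast})$. For the quasi-abelian identity I would pair the equation of Definition \ref{defabeliano} against $\phi\otimes\xi\in H^{\ast}\otimes L^{\ast}$; writing the coaction legs as $\langle\phi\blacktriangleleft\xi_i,h\rangle$ and each term $\langle\xi_i,h_j\rightharpoonup x\rangle$ as $\langle{\xi_i}^{(-1)},h_j\rangle\langle{\xi_i}^{(0)},x\rangle$, and using that $\Delta_L$ transposes to the product of $L^{\ast}$, one obtains precisely the quasi-abelian identity of Definition \ref{defabeliano} for $(L^{\ast},H^{\ast})$.

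The main obstacle is the compatibility condition (iii), which simultaneously involves $\Delta_L$, $m_L$, the twist $\tau_{L,H}$ and both structure maps spread over several tensor legs. Rather than pairing leg by leg, I would phrase condition (iii), in the form of Remark \ref{troca}, as an equality of two linear maps built from $\rightharpoonup$, $\rho$, $\Delta_L$, $m_L$, $\Delta_H$, $m_H$ and $\tau_{L,H}$, and then take transposes. Under transposition each $m_L$ becomes $\Delta_{L^{\ast}}$, each $\Delta_L$ and $\Delta_H$ become the products of $L^{\ast}$ and $H^{\ast}$, the maps $\rho$ and $\rightharpoonup$ become $\blacktriangleleft$ and the coaction, and $\tau_{L,H}$ becomes $\tau_{H^{\ast},L^{\ast}}$; the resulting identity of transposed maps is exactly condition (iii) for $(L^{\ast},H^{\ast})$. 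The delicate point throughout is the order reversal that transposition forces on products and coproducts, so that after regrouping the Sweedler indices occupy the correct slots; once this bookkeeping is fixed the verification is mechanical.
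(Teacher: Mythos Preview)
Your proposal is correct and follows essentially the same approach as the paper: define the dual action $(\phi\blacktriangleleft\xi)(h)=\phi(h^{0})\xi(h^{1})$ and the dual coaction $\rho(\xi)=\sum_i h_i^{\ast}\otimes(\xi\leftharpoondown h_i)$ (with $(\xi\leftharpoondown h)(x)=\xi(h\rightharpoonup x)$), and then transpose each axiom. The paper's own proof merely records these two dual structure maps and declares the remaining verifications ``straightforward,'' so your write-up is in fact more detailed than the original while following the same route.
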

\begin{proof} Let $(H,L)$ be a quasi-abelian partial matched pair. Since $H$ is a right partial $L$-comodule coalgebra via $\rho(h)=h^0\otimes h^1$, for all $h\in H$, it follows that $H^{\ast}$ is a right partial $L^{\ast}$-module algebra via
	$$(\phi\leftharpoonup f)(h)=\phi(h^0)f(h^1),$$
		for all $f\in L^{\ast}$, $\phi\in H^{\ast}$ and $h\in H$. Similarly, since $L$ is a left partial $H$-module algebra via $\rightharpoonup$, then $L^{\ast}$ is a  left partial $H^{\ast}$-comodule coalgebra via
		$$\rho(f)= \sum\limits_{i=1}^{n} {h_i}^{\ast} \otimes f\leftharpoondown h_i,$$
		where $(f\leftharpoondown h_i)(x)=f(h_i\rightharpoonup x)$, for all $f\in L^{\ast}$, $x\in L$ and $\{h_i\}_{i=1}^{n}$ a basis of $H$ with dual basis $\{h_i^*\}_{i=1}^{n}$.
		
It is straightforward to check that $(L^{\ast}, H^{\ast})$ is a quasi-abelian partial matched pair.
\end{proof}

As a consequence, we have the following result.

\begin{cor}\label{cor dual hopf}
	Under the previous conditions, if $(H,L)$ satisfies \emph{(\ref{I'})} and \emph{(\ref{II'})}, then $H^{\ast}\overline{\underline{\#}}L^{\ast}$ is a Hopf algebra.
\end{cor}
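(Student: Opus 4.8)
The plan is to obtain the statement by duality, leveraging that the partial bismash product of the original pair is already known to be a Hopf algebra. First I would invoke Theorem \ref{anti}: since $(H,L)$ is a quasi-abelian partial matched pair satisfying \eqref{I'} and \eqref{II'}, the partial bismash product $L\due H$ is a Hopf algebra. As $H$ and $L$ are finite dimensional, $L\due H$ is a finite-dimensional subspace of $L\otimes H$, hence a finite-dimensional Hopf algebra, and therefore its linear dual $(L\due H)^{\ast}$ is again a Hopf algebra, with the structure maps transposed and with antipode $(S_{L\due H})^{\ast}$.

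Next, by Proposition \ref{dual do par combinado}, $(L^{\ast},H^{\ast})$ is a quasi-abelian partial matched pair in the right-module/left-comodule convention, so by the analogue of Theorem \ref{principal} the space $H^{\ast}\overline{\underline{\#}}L^{\ast}$ carries a bialgebra structure whose multiplication, unit, comultiplication and counit are expressed through the dual action $\leftharpoonup$ and the dual coaction of that proposition. The crux of the argument is then to exhibit a bialgebra isomorphism $H^{\ast}\overline{\underline{\#}}L^{\ast}\cong (L\due H)^{\ast}$. I would use the nondegenerate pairing induced by $\langle \phi\otimes f,\, x\otimes h\rangle=\phi(h)f(x)$ (incorporating the twist $L\otimes H\cong H\otimes L$), and check that the multiplication of $H^{\ast}\overline{\underline{\#}}L^{\ast}$ is the transpose of the comultiplication of $L\due H$ in Theorem \ref{principal}(iii), while its comultiplication is the transpose of the multiplication in Theorem \ref{principal}(i); the unit and counit correspond analogously. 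Once this identification of structure maps is verified, $H^{\ast}\overline{\underline{\#}}L^{\ast}$ inherits the Hopf algebra structure of $(L\due H)^{\ast}$, with antipode the transpose of $S(x\due h)=(1_L\due S_H(h^0))(S_L(h^1)S_L(x)\due 1_H)$.

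The step I expect to be the main obstacle is precisely this matching of structure maps under the pairing. One must track carefully the left/right conventions (the dual pair lives in the ``similar'' convention), the doubly-barred subspaces $\overline{L\underline{\#}H}$ and $H^{\ast}\overline{\underline{\#}}L^{\ast}$, the identifications $1_{H^{\ast}}=\varepsilon_H$ and $\varepsilon_{H^{\ast}}=\mathrm{ev}_{1_H}$, and the transposes $S_{H^{\ast}}=(S_H)^{\ast}$, $S_{L^{\ast}}=(S_L)^{\ast}$. The explicit formulas $(\phi\leftharpoonup f)(h)=\phi(h^0)f(h^1)$ and $\rho(f)=\sum_i h_i^{\ast}\otimes(f\leftharpoondown h_i)$ from Proposition \ref{dual do par combinado} are exactly what force the transpose relations, so the verification, while bookkeeping-heavy, should be routine once organized.

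As an alternative that avoids the explicit isomorphism, I could instead transpose the identities \eqref{I'} and \eqref{II'} directly to obtain their analogues for the dual pair $(L^{\ast},H^{\ast})$, check these via the dual action and coaction above, and then apply the ``similar'' version of Theorem \ref{anti} to $(L^{\ast},H^{\ast})$. This route replaces the isomorphism check by two dualized computations but requires re-deriving the antipode conditions in the dual setting, so I would favor the duality argument, which uses the fact that the dual of a finite-dimensional Hopf algebra is automatically a Hopf algebra.
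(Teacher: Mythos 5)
Your proposal is correct, but note that your \emph{preferred} route and the paper's route are swapped: the paper's entire proof of Corollary~\ref{cor dual hopf} is the one-line argument you relegate to the end, namely ``apply Theorem~\ref{anti} to the quasi-abelian partial matched pair $(L^{\ast},H^{\ast})$'' furnished by Proposition~\ref{dual do par combinado} (with the dualized versions of \eqref{I'} and \eqref{II'} understood to hold for the dual action $(\phi\leftharpoonup f)(h)=\phi(h^0)f(h^1)$ and the dual coaction $\rho(f)=\sum_i h_i^{\ast}\otimes(f\leftharpoondown h_i)$). Your main plan --- build the bialgebra $H^{\ast}\overline{\underline{\#}}L^{\ast}$ from the dual pair, exhibit a bialgebra isomorphism with the finite-dimensional Hopf algebra $(L\due H)^{\ast}$ via the pairing $\theta(\varphi\#f)(x\due h)$, and transport the antipode --- is logically sound, but it is essentially the content of Theorem~\ref{tobem}, which the paper proves \emph{after} the corollary and which costs a full page of Sweedler-notation bookkeeping (using Remark~\ref{troca}, Lemma~\ref{igualdade importante} and Lemma~\ref{importante}) to verify that $\theta$ respects multiplication and comultiplication. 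So your route buys the isomorphism $(L\due H)^{\ast}\simeq H^{\ast}\underline{\overline{\#}}L^{\ast}$ as a byproduct, at the price of doing the heaviest computation in the section up front; the paper's route gets the corollary almost for free from the already-established Theorem~\ref{anti} and defers the isomorphism to a separate theorem. Either way the step you should not skip is checking that \eqref{I'} and \eqref{II'} do transpose correctly to the $(L^{\ast},H^{\ast})$ setting (the paper itself leaves this implicit), since Theorem~\ref{anti} --- or, in your version, the existence of the antipode on $(L\due H)^{\ast}$ --- is where those hypotheses are actually consumed.
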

\begin{proof}
	It is enough to apply Theorem \ref{anti} to the quasi-abelian partial matched pair $(L^{\ast},H^{\ast})$.
\end{proof}

\begin{thm}\label{tobem}
	Let $H$ and $L$ be finite dimensional Hopf algebras. If $(H,L)$ is a quasi-abelian partial matched pair for which the equations \emph{(\ref{I'})} and \emph{(\ref{II'})} hold, then
		$$(L \underline{\overline{\#}} H)^{\ast} \simeq H^{\ast} \underline{\overline{\#}} L^{\ast}$$
	as Hopf algebras.
\end{thm}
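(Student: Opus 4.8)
The plan is to produce a nondegenerate Hopf pairing between $H^{\ast}\underline{\overline{\#}}L^{\ast}$ and $L\underline{\overline{\#}}H$, from which the desired isomorphism follows formally. Recall that $L\underline{\overline{\#}}H$ is a Hopf algebra by Theorem \ref{anti} and that $H^{\ast}\underline{\overline{\#}}L^{\ast}$ is a Hopf algebra by Corollary \ref{cor dual hopf}; since both are finite dimensional and any bialgebra map between Hopf algebras automatically commutes with the antipodes, it suffices to exhibit a bialgebra isomorphism. A nondegenerate bilinear form $\langle\cdot,\cdot\rangle\colon (H^{\ast}\underline{\overline{\#}}L^{\ast})\times(L\underline{\overline{\#}}H)\to\Bbbk$ satisfying the Hopf-pairing axioms induces exactly such a map $H^{\ast}\underline{\overline{\#}}L^{\ast}\xrightarrow{\sim}(L\underline{\overline{\#}}H)^{\ast}$, $u\mapsto\langle u,\cdot\rangle$. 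I would define the pairing from the canonical perfect pairing of the ambient spaces: viewing $H^{\ast}\underline{\overline{\#}}L^{\ast}\subseteq H^{\ast}\otimes L^{\ast}$ and $L\underline{\overline{\#}}H\subseteq L\otimes H$, set $\langle\phi\otimes f,\,x\otimes h\rangle=f(x)\phi(h)$ and restrict; on bar-elements this reads as the pairing of the explicit representatives supplied by \eqref{novaescrita}. Working inside the ambient tensor products makes well-definedness automatic.

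Next I would establish nondegeneracy. Both subspaces arise as images of idempotent projections, $L\underline{\overline{\#}}H=(\varepsilon\otimes\mathrm{id})\Delta\big((L\#H)(1_L\#1_H)\big)$ and symmetrically for $H^{\ast}\underline{\overline{\#}}L^{\ast}$. The cleanest route is to check that, under the identification $H^{\ast}\otimes L^{\ast}\cong(L\otimes H)^{\ast}$ furnished by the ambient pairing, the projection cutting out $H^{\ast}\underline{\overline{\#}}L^{\ast}$ is the transpose of the projection $P$ cutting out $L\underline{\overline{\#}}H$. For an idempotent $P$ on $L\otimes H$ one has $L\otimes H=\operatorname{im}P\oplus\ker P$ and $\operatorname{im}P^{\ast}=(\ker P)^{\perp}$, so $\operatorname{im}P^{\ast}$ pairs perfectly with $\operatorname{im}P$; hence the restricted pairing is nondegenerate and the two dimensions match automatically. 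Identifying these projections as mutual transposes is a direct computation using the explicit forms of the partial (co)actions on the duals recorded in Proposition \ref{dual do par combinado}.

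The core of the proof is verifying the two Hopf-pairing identities, namely $\langle u, ab\rangle=\langle\Delta u, a\otimes b\rangle$ and $\langle uv, a\rangle=\langle u\otimes v, \Delta a\rangle$ for $u,v\in H^{\ast}\underline{\overline{\#}}L^{\ast}$ and $a,b\in L\underline{\overline{\#}}H$, together with the unit/counit compatibilities $\langle u, 1\rangle=\varepsilon(u)$ and $\langle 1, a\rangle=\varepsilon(a)$. Here the construction of Proposition \ref{dual do par combinado} does the essential work: the right $L^{\ast}$-action on $H^{\ast}$ is the transpose of the $L$-coaction on $H$, that is $\langle\phi\leftharpoonup f, h\rangle=\phi(h^{0})f(h^{1})=\langle\phi\otimes f,\rho(h)\rangle$, and the $H^{\ast}$-coaction on $L^{\ast}$ is the transpose of the $H$-action on $L$, that is $\langle f\leftharpoondown h, x\rangle=f(h\rightharpoonup x)$. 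Since the multiplication of $L\underline{\overline{\#}}H$ in Theorem \ref{principal} is built from $\rightharpoonup$ while the comultiplication of $H^{\ast}\underline{\overline{\#}}L^{\ast}$ is built from the coaction on $L^{\ast}$, these transpose relations turn $\langle uv,a\rangle=\langle u\otimes v,\Delta a\rangle$ into an identity; dually, the comultiplication of $L\underline{\overline{\#}}H$ uses the coaction $\rho$ on $H$ while the multiplication of $H^{\ast}\underline{\overline{\#}}L^{\ast}$ uses $\leftharpoonup$, giving $\langle u,ab\rangle=\langle\Delta u,a\otimes b\rangle$. The unit/counit compatibilities reduce to the counit formula $\varepsilon(\overline{x\underline{\#}h})=\varepsilon_L(x)\varepsilon_L(h\rightharpoonup 1_L)$ and its dual.

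The main obstacle I anticipate is the bookkeeping caused by partiality: unlike the global Takeuchi setting, every bar-element carries correction factors such as $\varepsilon_H({h_1}^0)x{h_1}^1$ and $h\rightharpoonup 1_L$, and one must check that these match on both sides of each Hopf-pairing identity. This is where the quasi-abelian hypothesis (Definition \ref{defabeliano}) and the auxiliary identities of Lemmas \ref{igualdade importante}, \ref{importante} and \ref{condicoes}, together with the standing assumptions \eqref{I'} and \eqref{II'}, must be invoked repeatedly; in particular Lemma \ref{condicoes}(ii), $h\rightharpoonup 1_L=\varepsilon_L(h\rightharpoonup 1_L)1_L$, is what collapses the correction terms so that the two projections are genuine transposes. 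Once the pairing is shown to be a nondegenerate Hopf pairing, the induced map is a bialgebra isomorphism, hence a Hopf algebra isomorphism, which completes the proof.
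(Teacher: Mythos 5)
Your pairing, evaluated on the representatives from \eqref{novaescrita}, is exactly the paper's map $\theta(\varphi\#f)(x\due h)=\varepsilon_H({h_1}^0)\,\varphi(h_3)\,f(x{h_1}^1(h_2\rightharpoonup 1_L))$, and your two Hopf-pairing identities are precisely the paper's verification that $\theta$ is a morphism of algebras and of coalgebras, carried out with the same auxiliary lemmas you cite; so this is essentially the paper's proof in different packaging. The only point of divergence is bijectivity, which the paper settles by exhibiting the explicit inverse $\theta^{-1}(\xi)=\sum_{i}{h_i}^{\ast}\due\xi_i$ with $\xi_i(x)=\xi(x\due h_i)$ rather than by your transposed-projection argument; both routes work.
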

\begin{proof}
	Consider the liner map
	\begin{eqnarray*}
		\theta: H^{\ast}\# L^{\ast} & \longrightarrow & (L\underline{\overline{\#}} H)^{\ast}\\
		\varphi\# f & \longmapsto & \theta(\varphi \# f)( x\underline{\overline{\#}} h)=\varepsilon_H({h_1}^0) \varphi(h_3)f(x{h_1}^1(h_2\rightharpoonup 1_L)).
	\end{eqnarray*}	
	
	Note that $\theta(\varphi\underline{\overline{\#}} f)(x\underline{\overline{\#}} h)=\theta(\varphi\# f)(x\underline{\overline{\#}} h)$, for all $\varphi\underline{\overline{\#}} f\in H^{\ast}\underline{\overline{\#}} L^{\ast}$ and $(x\underline{\overline{\#}} h)\in L\underline{\overline{\#}} H$. Thus, the restriction
	\begin{eqnarray*}
		\theta: H^{\ast}\underline{\overline{\#}} L^{\ast} & \longrightarrow & (L\underline{\overline{\#}} H)^{\ast}
		\end{eqnarray*}	
 is well defined and its inverse $\theta^{-1}: (L\underline{\overline{\#}} H)^{\ast}  \longrightarrow H^{\ast}\underline{\overline{\#}} L^{\ast}$ is given by $\theta^{-1}(\xi)=\sum\limits_{i=1}^{n} {h_i}^{\ast} \underline{\overline{\#}} \xi_i$, for all $\xi\in (L\underline{\overline{\#}} H)^{\ast}$, where $\{h_i\}_{i=1}^{n}$ is a basis of $H$ with dual basis $\{h_i^*\}_{i=1}^{n}$ and $\xi_i\in L^{\ast}$ is such that $\xi_i(x)= \xi(x\underline{\overline{\#}} h_i)$, for all $x\in L$.
	
	Moreover, $\theta$ is a morphism of algebras. Indeed,
		\begin{eqnarray*}
	& &\hspace{-0.7cm}(\theta( \varphi \underline{\overline{\#}} f)\theta(\phi \underline{\overline{\#}} g))(x\underline{\overline{\#}} h)=\\
	&=& \hspace{-0.3cm}\theta( \varphi \underline{\overline{\#}} f)(x_1\underline{\overline{\#}} {h_1}^0)\theta(\phi \underline{\overline{\#}} g)(x_2{h_1}^1\underline{\overline{\#}} h_2)\\
	&=& \hspace{-0.3cm}\varepsilon_H({{{h_1}^0}_1}^0) \varphi({{h_1}^0}_3) f(x_1 {{{h_1}^0}_1}^1({{h_1}^0}_2\rightharpoonup 1_L)) \varepsilon_H({h_2}^0) \phi(h_4)g(x_2 {h_1}^1 {h_2}^1 (h_3\rightharpoonup 1_L))\\
	&=& \hspace{-0.3cm}\varepsilon_H({{{h_1}^0}_1}^0) \varphi({{h_1}^0}_3)\phi(h_3)f(x_1 {{{h_1}^0}_1}^1 ({{h_1}^0}_2\rightharpoonup 1_L))g(x_2 {{h_1}^1} (h_2\rightharpoonup 1_L))\\
	&=& \hspace{-0.3cm}\varepsilon_H({{{h_1}^0}_1}^0) \varphi({h_2}^0)\phi(h_4)f(x_1 {{{h_1}^0}_1}^1 ({{h_1}^0}_2\rightharpoonup 1_L))g(x_2 {{h_1}^1} {h_2}^1(h_3\rightharpoonup 1_L))\\
	&=& \hspace{-0.3cm}\varepsilon_H({h_1}^{00}) \varphi({h_3}^0)\phi(h_5)f(x_1 {h_1}^{01} ({h_2}^0\rightharpoonup 1_L))g(x_2 {{h_1}^1} {h_2}^1{h_3}^1(h_4\rightharpoonup 1_L))\\
	&=& \hspace{-0.3cm}\varepsilon_H({h_1}^0) \varphi({h_3}^0)\phi(h_5)f(x_1 {{h_1}^1}_1 ({h_2}^0\rightharpoonup 1_L))g(x_2 {{h_1}^1}_2 {h_2}^1 {h_3}^1 (h_4\rightharpoonup 1_L))\\
	 &\stackrel{\ref{igualdade importante}}{=}& \hspace{-0.3cm}\varepsilon_H({h_1}^0) \varphi({h_3}^0)\phi(h_4) f(x_1 {{h_1}^1}_1 (h_2\rightharpoonup 1_L)_1) g(x_2 {{h_1}^1}_2 (h_2\rightharpoonup 1_L)_2 {h_3}^1)\\
	 &=& \hspace{-0.3cm}\varepsilon_H({h_1}^0) [(\varphi \leftharpoonup g_2)\phi](h_3) (fg_1)(x {h_1}^1 (h_2\rightharpoonup 1_L))\\
	 &=&\hspace{-0.3cm}\theta((\varphi \underline{\overline{\#}} f)(\phi \underline{\overline{\#}} g)) (x \underline{\overline{\#}} h).
	\end{eqnarray*}
	And, \[\theta(\varepsilon_H \underline{\overline{\#}} \varepsilon_L)( x \underline{\overline{\#}} h) = \varepsilon_H({h_1}^0) \varepsilon_H(h_3) \varepsilon_L( x {h_1}^1 (h_2 \rightharpoonup 1_L))= \varepsilon(x \underline{\overline{\#}} h).\]

	\vu

	It remains to verify that $\theta$ is a morphism of coalgebras. That is,
	\begin{eqnarray*}
	&\hspace{-0.7cm}&\hspace{-0.85cm}\Delta(\theta(\varphi \underline{\overline{\#}} f)) (x \underline{\overline{\#}} h \otimes y \underline{\overline{\#}} g)=\\
	&\hspace{-0.7cm}=& \hspace{-0.4cm}\theta( \varphi \due f) (x(h_1\rightharpoonup y) \due h_2g)\\
		&\hspace{-0.7cm}=& \hspace{-0.4cm}\varepsilon_H(({h_2g_1})^0) \varphi(h_4g_3)f(x(h_1\rightharpoonup y)(h_2g_1)^1(h_3g_2\rightharpoonup 1_L))\\
		&\stackrel{\hspace{-0.7cm}\ref{troca}}{\hspace{-0.7cm}=}& \hspace{-0.4cm}\varepsilon_H({h_3}^0{g_1}^0)\varphi(h_6g_3) f(x\varepsilon_L({h_1}^0\rightharpoonup y_1){h_1}^1 (h_2\rightharpoonup y_2){h_3}^1(h_4\rightharpoonup {g_1}^1)(h_5g_2\rightharpoonup 1_L))\\
		&\stackrel{\hspace{-0.7cm}\ref{partial matched pair}}{\hspace{-0.7cm}=}&\hspace{-0.4cm} \varepsilon_H({h_3}^0{g_1}^0)\varphi(h_5g_3)f(x\varepsilon_L({h_1}^0\rightharpoonup 1_L){h_1}^1 (h_2\rightharpoonup y){h_3}^1(h_4\rightharpoonup ({g_1}^1(g_2\rightharpoonup 1_L))))\\
		&\stackrel{\hspace{-0.7cm}3.5}{\hspace{-0.7cm}=}&\hspace{-0.4cm} \varepsilon_H({h_2}^0{g_1}^0)\varphi(h_5g_3)f(x\varepsilon_L({h_1}^0\rightharpoonup 1_L){h_1}^1{h_2}^1(h_3\rightharpoonup y)(h_4\rightharpoonup ({g_1}^1(g_2\rightharpoonup 1_L))))\\
		&\hspace{-0.7cm}=&\hspace{-0.4cm}\varepsilon_H({g_1}^0)\varphi(h_4g_3)f(x\varepsilon_L({h_1}^0\rightharpoonup 1_L){h_1}^1(h_2\rightharpoonup 1_L)(h_3\rightharpoonup (y{g_1}^1(g_2\rightharpoonup 1_L))))\\
		&\stackrel{\hspace{-0.7cm}\ref{importante}}{\hspace{-0.7cm}=}&\hspace{-0.4cm} \varepsilon_H({g_1}^0)\varphi(h_3g_3)f(x\varepsilon_L({h_1}^0){h_1}^1(h_2\rightharpoonup (y{g_1}^1(g_2\rightharpoonup 1_L))))\\
		&\hspace{-0.7cm}=& \hspace{-0.4cm}\varepsilon_H({h_1}^0) \varepsilon_H({g_1}^0)\varphi(h_4 g_3) f_1 (x {h_1}^1 (h_2\rightharpoonup 1_L))f_2(h_3\rightharpoonup (y{g_1}^1(h_2\rightharpoonup 1_L)))\\
		&\hspace{-0.7cm}=& \hspace{-0.4cm} \sum\varepsilon_H({h_1}^0){h_i}^*(h_3)\varphi(h_4g_3)f_1(x{h_1}^1(h_2\rightharpoonup 1_L))\varepsilon_H({g_1}^0)(f_2\leftharpoondown h_i)(y{g_1}^1(g_2\rightharpoonup 1_L))\\
		&\hspace{-0.7cm}=&\hspace{-0.4cm} \varepsilon_H({h_1}^0){f_2}^{-1}(h_3)\varphi_1(h_4)f_1(x{h_1}^1(h_2\rightharpoonup 1_L))\varepsilon_H({g_1}^0)\varphi_2(g_3){f_2}^0(y{g_1}^1(g_2\rightharpoonup 1_L))\\
		&\hspace{-0.7cm}=& \hspace{-0.4cm}\theta({f_2}^{-1}\varphi_1 \underline{\overline{\#}} f_1)(x \underline{\overline{\#}} h) \theta(\varphi_2 \underline{\overline{\#}} {f_2}^0)(y \underline{\overline{\#}} g)\\
		&\hspace{-0.7cm}=&\hspace{-0.4cm}(\theta \otimes \theta) \circ \Delta(\varphi \underline{\overline{\#}} f)(x \underline{\overline{\#}} h \otimes y \underline{\overline{\#}} g).
	\end{eqnarray*}
And,
	\begin{eqnarray*}
		\varepsilon(\varphi \due f) &=& \varepsilon(\varepsilon_{L^*}({f_3}^0)(1_{L^*} \leftharpoondown f_2){f_3}^{-1}\varphi \# f_1)\\
		&=& \varepsilon_{H^*}(\varepsilon_L\leftharpoondown f)\varphi(1_H)\\
		&=& f({1_H}^1)\varepsilon_L({1_H}^0)\varphi(1_H)\\
		&=& \theta(\varphi \due f)(1_L \due 1_H)\\
		&=& \varepsilon\circ\theta(\varphi \due f).
	\end{eqnarray*}
Therefore, $\theta$ is a isomorphism of Hopf algebras.
	\end{proof}

\end{document}